\let\theoremstyle\nothing
\newtheorem{theorem}{Theorem}[section]
\newtheorem{lemma}[theorem]{Lemma}
\newtheorem{corollary}[theorem]{Corollary}
\theoremstyle{definition}
\newtheorem{definition}[theorem]{Definition}
\newtheorem{example}[theorem]{Example}
\newtheorem{proposition}[theorem]{Proposition}
\theoremstyle{remark}
\begin{document}

\begin{frontmatter}

% Title, authors and addresses

% use the thanksref command within \title, \author or \address for footnotes;
% use the corauthref command within \author for corresponding author footnotes;
% use the ead command for the email address,
% and the form \ead[url] for the home page:
% \title{Title\thanksref{label1}}
% \thanks[label1]{}
% \author{Name\corauthref{cor1}\thanksref{label2}}
% \ead{email address}
% \ead[url]{home page}
% \thanks[label2]{}
% \corauth[cor1]{}
% \address{Address\thanksref{label3}}
% \thanks[label3]{}

% use optional labels to link authors explicitly to addresses:
% \author[label1,label2]{}
% \address[label1]{}
% \address[label2]{}

\begin{keyword}
 Eulerian poset, binomial poset, Sheffer poset, triangular poset
% keywords here, in the form: keyword \sep keyword
% PACS codes here, in the form: \PACS code \sep code
\end{keyword}

\title{    Finite Eulerian posets which are binomial,
  Sheffer or triangular}

\author{Hoda Bidkhori}
\address{     Department of Mathematics, Massachusetts Institute of
Technology, Cambridge, Massachusetts $02139$} \ead{bidkhori@mit.edu}

\begin{abstract}

In this paper we study finite Eulerian posets which are binomial,
Sheffer or triangular. These important classes of posets are related
to the theory of generating functions and to geometry. The results
of this paper are organized as follows:
\begin{enumerate}
\item[$\bullet$]  We completely determine the structure of Eulerian
binomial posets and, as a conclusion, we are able to classify
factorial functions of Eulerian binomial posets;
\item[$\bullet$]  We give an almost complete classification of factorial functions of Eulerian
Sheffer posets by dividing the original question into several cases;
\item[$\bullet$]  In most cases above, we completely determine the
structure of Eulerian Sheffer posets, a result stronger
 than just classifying factorial functions of these Eulerian
Sheffer posets.
\end{enumerate}
We also study  Eulerian triangular posets. This paper answers
questions asked by R. Ehrenborg and M. Readdy. This research is also
motivated by the work of R. Stanley about recognizing the {\em
boolean lattice} by looking at smaller intervals.

\end{abstract}

\end{frontmatter}

%{\emph In honor of Manfred Schocker (1970-2006).  The authors would
%also like to acknowledge the contributions that he made to this
%paper.}

\section{Introduction} \label{introduction}
%Binomial posets are a natural tool to generalize the notion of exponential generating functions.

The theory of binomial posets was developed in~\cite{DRS1} by
Doubilet, Rota and Stanley to formalize certain aspects of the
theory of generating functions. Binomial posets can be used to unify
various aspects of enumerative combinatorics and generating
functions. These posets are highly regular posets since the
essential requirement is that every two intervals of the same length
have the same number of maximal chains. Ehrenborg and Readdy
in~\cite{MR2} and independently Reiner in~\cite{R1} generalized the
notion of a binomial poset to a larger class of posets, which we
call Sheffer posets.

Ehrenborg and Readdy~\cite{MR1} gave a complete classification of
the factorial functions of infinite Eulerian binomial posets and
infinite Eulerian Sheffer posets, where infinite posets are posets
which contain an infinite chain. They introduced the open question
of characterizing the finite case. This paper deals with these
questions.
 %suggested to study
%finite Eulerian binomial posets as well as finite Eulerian Sheffer
%posets, and this paper deals with these questions.

\

A \emph{triangular poset} is a graded poset such that the number of
maximal chains in each interval $[x,y]$ depends only on $\rho(x)$
and $\rho(y)$, where $\rho(x)$ and $\rho(y)$ are ranks of the
elements $x$ and $y$, respectively. Here we define Sheffer posets
which are special class of triangular posets. A \emph{Sheffer poset}
is a graded poset such that the number of maximal chains $D(n)$ in
an $n$-interval $[\hat{0},y]$ depends only on $\rho(y)$, the rank of
the element $y$, and the number $B(n)$ of maximal chains in an
$n$-interval $[x,y]$, where $x \neq \hat{0}$,
 depends only on $\rho(x,y)=\rho(y)-\rho(x)$. Two factorial functions
 $B(n)$ and $D(n)$ are called \emph{binomial factorial functions}
 and \emph{Sheffer factorial functions}, respectively. A  \emph{binomial poset}
is a graded poset such that the number of maximal chains $B(n)$ in
an $n$-interval  $[x,y]$ depends only on
$\rho(x,y)=\rho(y)-\rho(x)$.

%The following natural questions arise, as suggested by R. Ehrenborg
%and M. Readdy in ~\cite{MR1}.

 %As a result, many poset invariants
%are determined. For instance, the quintessential M\"obius function
%is described by the generating function identity,
%\begin{equation}\label{f0}
%\sum_{n\geq 0}{\mu(n)\cdot{\frac{t^{n}}{B(n)}}}={\left(\sum_{n\geq
%0}{{\frac{t^{n}}{B(n)}}}\right)}^{-1},
%\end{equation}
%where $\mu(n)$ is the M\"obius function of an $n$-interval(i.e, an
%interval of length $n$) and $B(n)$ (the factorial function) denotes
%the number of maximal chains in an $n$-interval. An (infinite)
%binomial poset is required to contain an infinite chain so that
%there are intervals of any length in the poset.

\

 A graded poset $P$ is \emph{Eulerian} if every non-singleton interval of $P$
 satisfies the \emph{Euler-Poincar\'e} relation: the number of elements
 of even rank is equal to the number of elements of odd rank in
 that interval. In other words,  for all $x\leq y$ in $P$,  the
 M\"obius function is given by $\mu(x,y)=(-1)^{\rho(y)-\rho(x)}$, where $\rho$ is the rank function of $P$. Eulerian posets form an important class of posets as there are many geometric
examples such as the face lattices of convex polytopes, and more
generally, the face posets of regular CW-spheres.

\

As we mentioned above, Ehrenborg and Readdy in~\cite{MR1} classify
the factorial functions of infinite Eulerian binomial posets and
infinite Eulerian Sheffer posets. Since we are concerned here with
finite posets, we drop the requirement that binomial, Sheffer
 and triangular posets have an infinite chain.  This paper
deals with the following natural questions, as suggested by
Ehrenborg and  Readdy in~\cite{MR1}.
\begin{enumerate}
\item  Which Eulerian posets are binomial? \item  Which
Eulerian posets are Sheffer?
\end{enumerate}
We also briefly look over  Eulerian triangular posets.
%This paper deals with these questions. We also briefly look over
%Eulerian triangular posets.

%R. Ehrenborg and M. Readdy \cite{MR1} give a classification of the
%factorial function of in Eulerian binomial posets and in Eulerian
%Sheffer posets, so our object is to try to extend these results to
%the finite case.

\

We should mention that Stanley has proved that one can recognize
\emph{boolean lattices} by looking at smaller intervals (see
\cite{G}, Lemma~8). Farley and Schmidt answer a similar question for
\emph{distributive lattices} in~\cite{FS1}. The project of studying
Eulerian binomial posets and Eulerian Sheffer posets is also
motivated by their works. In many cases we use the factorial
function of smaller intervals to characterize the whole posets.

\subsection{Our results}

All posets considered in this paper are finite. Let us first
describe the two following poset operations:

Let $Q_i$, $i=1, \dots, k$, be posets which contain a unique maximal
element $\hat{1}$ and a unique minimal element $\hat{0}$. We define
$\boxplus_{i=1\dots k} Q_i$ to be the poset which is obtained by
identifying all of the minimal elements as well as identifying all
of the maximal elements of the posets $Q_i$. We define the
$\emph{k-summation}$ of $P$, denoted ${\boxplus^k}{(P)}$, to be
$\boxplus_{i=1\dots k} P$.

\

 Let $P$ be a poset with  $\hat{0}$. We define the \emph{dual suspension} of $P$, denoted $\Sigma^{*}(P)$,
to be the poset $P$ with two new elements $a_1$ and $a_2$.
$\Sigma^{*}(P)$ has the following order relation:
$\hat{0}<_{\Sigma^{*}(P)} {a_i} <_{\Sigma^{*}(P)} y$, for all $y>
\hat{0}$ in $P$ and $i=1, 2.$

\

%P_{q_{r}}$, as well as identifying all of their maximal elements,
%where $%identifying all minimal elements of posets $P_{q_{1}},\dots,
%P_{q_{r}}$%where .
% is the face lattice of $q_{i}$-gon.
Let $Q$ be a poset of odd rank. If $Q$ is an Eulerian Sheffer poset
then so is $\boxplus^{k}(Q)$. Moreover, if $P$ is an Eulerian
binomial poset, then $\Sigma^{*}(P)$ is an Eulerian Sheffer poset.

\

For Eulerian binomial posets $P$ of rank $n$, we describe their
structure depending on the value of $n$ as follows:
\begin{enumerate}

\item  $n=3$.  $P = \boxplus_{i=1\dots k} P_{q_{i}}$ for some
$q_1,\ldots ,q_r$ such that $q_i\geq 2,$ where we denote by
$P_{q}$, the face lattice of a $q$-gon.

\item  $n$ is even. $P$ is either isomorphic to $B_n$, the
boolean lattice of rank $n$, or $T_n$, the butterfly poset of rank
$n$ (defined in Definition~\ref{butterfly}).

\item $n$ is odd. $P$ is either isomorphic to
${\boxplus^{\alpha}}{(B_n)}$ or ${\boxplus^{\alpha}}{(T_n)}$ for
some positive integer $\alpha.$
\end{enumerate}

\

For Eulerian Sheffer posets $P$ of rank $n$, we describe their
structure and  factorial functions depending on the value of $n$:
\begin{enumerate}

\item  $n=3$.  $P = \boxplus_{i=1\dots k} P_{q_{i}}$ for some
$q_1,\ldots ,q_r$ such that $q_i\geq 2.$ \item $n=4$. The complete
classification of factorial functions of the poset $P$ follows from
Lemma~\ref{thm4}. \item  $n$ is odd and $n\geq 4$. Then one of the
following is true:
\begin{enumerate}
\item  $B(3)=D(3)=6$. Then $P={{\boxplus^\alpha}{(B_{n})}}$ for
some $\alpha$. \item   $B(3)=6, D(3)=8.$ This case is open.
\item  $n=5$, $B(3)=6, D(3)=10.$ This case remains open.
\item   $B(3)=6, D(3)=4.$
 %poset $P$ is the
%$\alpha$-summation of the poset $\Sigma^{*}(B_{2m})$,
Then $P={\boxplus^\alpha}{(\Sigma^{*}(B_{n-1}))}$ for some $\alpha.$
\item $B(3)=4$. The classification follows from Theorems $3.11$
and $3.13$ in~\cite{MR1}.

\end{enumerate}
\item $n$ is even and $n\geq 6$. Then one of the following is
true:
\begin{enumerate}
\item $B(3)=D(3)=6.$ Then $P=B_n$.

\item  $B(3)=6, D(3)=8$. The poset $P$ has the same factorial
function as the cubical lattice of rank $n$, that is,   $D(k)=
2^{k-1}(k-1)!$ and  $B(k)=k!.$

\item  $B(3)=6, D(3)=4.$ Then
$P=\Sigma^{*}({{\boxplus^\alpha}{(B_{n-1})}})$ for some $\alpha$.
%poset $P$ is isomorphic to %poset
 %$\Sigma^{*}({Q})$, dual suspension of poset $Q$, where poset $Q$ is $\alpha$-summation of  the boolean lattice of rank $2m+1$, $B_{2m+1}.$
% So

 % As a conclusion poset $P$ has following factorial functions:
 % $B(k)=k!$ for $1\leq k\leq 2m$, $B(2m+1)=\alpha(2m+1)!$, $D(1)=1$, $D(k)=2(k-1)!$ for $2\leq k \leq 2m+1$ and $D(2m+2)=2\alpha(2m+1)!$
\item $B(k)= 2^{k-1}$, for $1\leq k\leq 2m$, and $B(2m+1)=\alpha\cdot2^{2m}$ for some $\alpha>1$.  In this case $P$ is isomorphic to
$\Sigma^{*}{{\boxplus^\alpha}{(T_{2m+1})}}.$
\item $B(k)= 2^{k-1}$,
$1\leq k\leq 2m+1.$ The classification follows from Theorems $3.11$
and $3.13$ in~\cite{MR1}

 \end{enumerate}

\end{enumerate}

\

The paper is structured as follows. In Section~\ref{def} we cover
some basic definitions. In Section~\ref{binomial} we completely
classify the structure of Eulerian binomial posets. See
Lemma~\ref{binomial3}, Theorems~\ref{even} and~\ref{odd}. These
results, coupled with Ehrenborg and Readdy's classification in the
infinite case, complete the classification of Eulerian binomial
posets.  In section~\ref{shefferr}, we give an almost complete
classification of the factorial functions of Eulerian Sheffer
posets. In fact, in most of above cases we completely identify the
structure of the finite Eulerian Sheffer posets, a result which is
stronger than merely classifying the factorial functions.
 In Section~\ref{triangular} we review \emph{triangular posets}.
 We classify Eulerian triangular posets such that
 the factorial functions of all of their  $3$-intervals  is equal to
 $6$. Finally, in Section~\ref{remark} we provide some conclusions and
remarks.

\section{Definitions and background} \label{def} We encourage readers
to consult Chapter $3$ of~\cite{E1} for basic poset terminology. All
the posets which are considered in this paper are finite.

\

We begin by recalling that a graded interval satisfies the
\emph{Euler-Poincar\'e relation} if it has the same number of
elements of even rank as of odd rank.
\begin{definition}
 A graded poset is \emph{Eulerian} if every non-singleton interval
satisfies the Euler-Poincar\'e relation. Equivalently, a poset $P$
is Eulerian if its M\"obius function satisfies
$\mu(x,y)={(-1)}^{\rho(x)-\rho(y)}$ for all $x\leq y$ in $P$, where
$\rho$ denotes the rank function of $P.$
\end{definition}

\

\begin{definition}
A finite poset $P$ with unique minimal element $\hat{0}$ and unique
maximal element $\hat{1}$ is called a \emph{(finite) binomial poset}
if it satisfies the following two conditions:
 \begin{enumerate}
\item Every interval $[x,y]$ is graded; in particular $P$ has
rank function $\rho$. If $\rho(x,y)=n$, then we call $[x,y]$ an
$n$-\emph{interval}. \item For all $n \in \mathbb{N}$, $n \leq $
rank$(P)$, any two $n$-intervals  have the same number $B(n)$ of
maximal chains. We call $B(n)$ the \emph{factorial function} or
\emph{binomial factorial function} of the poset $P$.
\end{enumerate} \end{definition}

\

Next, we define the \emph{atom function} $A(n)$ to be the number of
coatoms in a binomial interval of length $n$. Therefore,
$A(n)=\frac{B(n)}{B(n-1)}$ and $B(n)=A(n)\cdots A(1)$.

Consider a  binomial poset $P$. The number of maximal chains passing
through each element of rank $k$ in any interval of rank $n$ is
$B(k)B(n-k)$, for $1\leq k\leq n.$ The total number of chains in
this interval is $B(n)$. Hence, the number of elements of rank $k$
in any interval of rank $n$ is equal to
\begin{equation}\label{rank}
\frac{B(n)}{B(k) B(n-k)}.
\end{equation}

Sheffer posets were defined by Ehrenborg and Readdy~\cite{MR2}  and
independently defined by Reiner~\cite{R1}.

\

\begin{definition}
A finite poset $P$ with a unique minimal element $\hat{0}$ and a
unique maximal element $\hat{1}$ is called a \emph{(finite) Sheffer
poset} if it satisfies the following three conditions:
 \begin{enumerate}

\item  Every interval $[x,y]$ is graded; in particular, $P$ has
a rank function $\rho$. If $\rho(x,y)=n$, then we call $[x,y]$ an
\emph{$n$-interval}. \item Two $n$-intervals $[\hat{0},y]$ and
$[\hat{0},v]$ have the same number $D(n)$ of maximal chains.
\item  Two $n$-interval $[x,y]$ and $[u,v]$ such that $x\neq
\hat{0}$ and $u\neq \hat{0}$ have the same number $B(n)$ of maximal
chains.

\end{enumerate} \end{definition}

\

Let us consider a Sheffer poset $P$. An interval $[\hat{0},y]$,
where $y\neq \hat{0}$, is called a \emph{Sheffer interval} whereas
an interval $[x,y]$ with $x\neq \hat{0}$ is called a \emph{binomial
interval}. $B(n)$ and $D(n)$ are called the \emph{binomial factorial
function} and \emph{Sheffer factorial function} of $P$,
respectively. Next we define $A(n)$ and $C(n)$ to be the number of
coatoms in a binomial interval of length $n$ and a Sheffer interval
of length $n$. $A(n)$ and $C(n)$  are called the \emph{atom
function} and \emph{coatom function} of $P$, respectively. It is not
hard to see that
 $A(n)=\frac{B(n)}{B(n-1)}$ and $B(n)=A(n)\cdots A(1)$,
as well as $C(n)=\frac{D(n)}{D(n-1)}$ and $D(n)=C(n)C(n-1)\cdots
C(1)$.

The number of elements of rank $k$ in a Sheffer interval of rank $n$
is
\begin{equation}\label{q0}
\frac{D(n)}{D(k) B(n-k)}.
\end{equation}
Moreover, for a binomial interval $[x,y]$ of rank $n$ in this
Sheffer poset, the number of elements of rank $k$ is equal to
\begin{equation}\label{q1}
\frac{B(n)}{B(k)B(n-k)}.
\end{equation}

\

The \emph{dual suspension} of a poset $P$ is defined in~\cite{MR1}
as follows.
\begin{definition}\label{dual}
Let $P$ be a poset with  $\hat{0}$. We define the \emph{dual
suspension} of $P$, denoted  $\Sigma^{*}(P)$, to be the poset $P$
with two new elements $a_1$ and $a_2$. $\Sigma^{*}(P)$ has the
following order relation: $\hat{0}<_{\Sigma^{*}(P)} {a_i}
<_{\Sigma^{*}(P)} y$, for all $y> \hat{0}$ in $P$ and $i=1, 2$. That
is, the elements $a_1$ and $a_2$ are inserted between $\hat{0}$ and
atoms of $P$. Clearly if $P$ is Eulerian then so is
${\Sigma^{*}(P)}$. Moreover, if $P$ is a binomial poset then
${\Sigma^{*}(P)}$ is a Sheffer poset with the factorial function
$D_{\Sigma^{*}(P)}(n)=2B(n-1)$, for $n\geq 2.$
\end{definition}

\

\

\begin{definition}\label{sus}
Let $P$ be a poset with  $\hat{1}$. We define the \emph{ suspension}
of $P$, denoted by $\Sigma(P)$, to be the poset $P$ with two new
elements $a_1$ and $a_2$. $\Sigma(P)$ has the following order
relation:

$\hat{1}>_{\Sigma(P)} {a_i}>_{\Sigma(P)} y$, for all $y< \hat{1}$ in
$P$ and $i=1, 2$.
\end{definition}

\

\begin{definition}\label{sum}
Let $P$ be a poset with $\hat{0}$ and $\hat{1}$, and let $k$ be a
positive integer. We define the $\emph{k-summation}$ of  $P$,
 denoted  ${\boxplus^k}{(P)}$, to be the poset which is obtained by identifying all minimal elements and all maximal elements of $k$ copies of $P.$

\
\end{definition}

The \emph{dual} of poset $P$, denoted $P^{*}$, is defined as
follows: $P^{*}$ has the same set of elements as $P$ and the
following order relation, $x<_{P^{*}}y$ if and only if $y<_{P}x$.

\

\begin{definition}\label{boolean}
The \emph{boolean lattice} $B_n$ of rank $n$ is the poset of subsets
of $[n]=\{1,\cdots,n\}$ ordered by inclusion.
\end{definition}

\

\begin{definition}\label{butterfly}
The \emph{ butterfly poset} $T_n$ of rank $n$ consists of the
elements of   ${\hat{0}}\cup
({{D_{n-1}}\times{\{1,2\}}})\cup{\hat{1}}$, where
${{D_{n-1}}\times{\{1,2\}}}$ is direct product of the chain of
length $n-1$, denoted by $D_{n-1}$, and the anti-chain of rank $2$,
with the order relation $(k,i)\prec (k+1,j)$ for all $i,j\in
\{1,2\}$. Also $\hat{0}$ and $\hat{1}$ are  the unique minimal and
the unique maximal elements of this poset, respectively. Clearly,
$T_n ={\Sigma^{*}(T_{n-1})}.$
\end{definition}

\

A larger class of posets to consider is the class of
\emph{triangular posets}.

\

\begin{definition}
A finite poset $P$ with $\hat{0}$ and $\hat{1}$ is called a
\emph{(finite) triangular poset} if it satisfies the following two
conditions.
 \begin{enumerate}
\item  Every interval $[x,y]$ is graded; hence $P$ has a rank
function $\rho$. \item  Every two intervals $[x,y]$ and $[u,v]$ such
that $\rho(x)=\rho(u)=m$ and $\rho(y)=\rho(v)=n$ have the same
number $B(m,n)$ of maximal chains.
\end{enumerate} \end{definition}

\

All posets considered in this paper are finite. By binomial, Sheffer
and triangular posets, we mean finite binomial, finite Sheffer and
finite triangular posets.

\section{Finite Eulerian binomial posets} \label{binomial}

For undefined poset terminology and further information about
binomial posets, see~\cite{E1}. In this section for an Eulerian
binomial poset $P$ of rank $n$ we describe its structure as follows.

\

\begin{enumerate}
\item  If $n=3$, then  $P = \boxplus_{i=1\dots k} P_{q_{i}}$ for some $q_1,\dots ,q_r$ such that $q_i\geq 2$,  where $P_{q_{i}}$ is the face
lattice of $q_{i}$-gon. % There is a list of integers $q_1,\dots ,q_r$ such that $q_i\geq 2$, for $1\leq i\leq r$, and %poset $P$ is obtained by

%identifying all minimal elements of posets $P_{q_{1}},\dots,
%P_{q_{r}}$, as well as identifying all of their maximal elements,
%where $P_{q_{i}}$ is the face lattice of $q_{i}$-gon.
\item  If $n$ is an even integer, then $P= B_n$ or $T_n$. %is isomorphic to either the butterfly poset of rank $n$, $T_n$ or boolean lattice of rank $n$, $B_n$
\item  If $n$ is an odd integer and $n\geq 5$, then there is an integer $k \geq 1$, such that  $P={\boxplus^k}{(B_n)}$ or $P={\boxplus^k}{(T_n)}$ (see Definition \ref{sum}).% of either $T_n$ or $B_n$,
%meaning that poset $P$ is obtained by identifying all minimal
%elements of $k$ copies of $B_n$,
% the boolean lattice
%or the butterfly poset of rank $n$
% and by identifying all of their maximal elements.
\end{enumerate}

%obtained by identifying all minimal elements of $k$ copies of $B_n$, boolean lattice of rank $n$ ($T_n$, butterfly poset of rank $n$) and identifying all of their maximal elements.
%\begin{definition}
%A finite  poset $P$ with unique minimal element $\hat{0}$ and unique
%maximal element $\hat{1}$ is called binomial poset if it satisfies
%the following two conditions:
 %\begin{enumerate}
%\item[$(\bullet)$] Every interval $[x,y]$ is graded; hence $P$ has rank function $\rho$. If $\rho(x,y)=n$, then we call $[x,y]$ an $n$-interval.
%\item[$(\bullet)$] For all $n \in \mathbb{N}$, $n \leq rank(P)$, any two $n$-intervals, such as $[x,y]$ and $[u,v]$,  have the same number
%$B(n)$ of maximal chains, we call $B(n)$, the \emph{factorial
%function} or \emph{binomail factorial function} of $P$.

%\end{enumerate} \end{definition}

%Next we define $A(n)$, the \emph{atom function}, to be the number of
%coatoms in a binomial interval of length $n$ . It's clear that
%$A(n)=\frac{B(n)}{B(n-1)}$ and $B(n)=A(n)\cdots A(1)$.

%In any interval of rank $n$, $[x,y]$, the number of maximal chains
%passing through element of rank $k$ in  this poset is $B(k).B(n-k)$,
%for $1\leq k\leq n.$

%So,  it's easy to check that, for an interval $[x,y]$ of rank $n$,
%the number of elements of rank $k$ is equal to
%$$\frac{B(n)}{B(k).B(n-k)}\cdot$$
\

First we provide some examples of finite binomial posets.

\

\begin{example}The boolean lattice $B_n$ of rank $n$ is an Eulerian binomial poset with factorial function $B(k)=k!$ and atom function $A(k)=k$, $k\leq n$.
Every interval of length~$k$ of this poset is isomorphic to $B_k$.
\end{example}
\
\begin{example} Let $D_n$ be the chain containing $n+1$ elements. This poset has factorial function $B(k)=1$ and atom function $A(k)=1$, for each $k\leq n$.
\end{example}
\
\begin{example} The butterfly poset $T_n$ of rank $n$ is an Eulerian binomial poset with
 factorial function $B(k)=2^{k-1}$ for $1\leq k\leq n$ and  atom
function $A(k)=2$, for $2\leq k\leq n$, and $A(1)=1$.
\end{example}
\
\begin{example} Let ${\mathbb{F}}_q$ be the $q$-element field where $q$ is a prime power and let $V_n =V_n(q)$ be an $n$-dimensional vector space over ${\mathbb{F}}_q$. Let $L_n=L_n(q)$ denote the poset of all subspaces of $V_n$, ordered by inclusion.
 $L_n$ is a graded lattice of rank $n$. It is easy to see that every interval of size $1\leq k\leq n$ is isomorphic to $L_k$. Hence $L_n(q)$ is a binomial
 poset. This poset is not Eulerian for $q\geq 3$.
\end{example}

\

It is not hard to see that in any $n$-interval of an Eulerian
binomial poset $P$ with factorial function $B(k)$ for $1\leq k\leq
n,$ the Euler-Poincar\'e relation is stated as follows:
\begin{equation}
\sum_{k=0}^{n}{(-1)^{k}\cdot\frac{B(n)}{B(k)B(n-k)}}=0.
\end{equation}
\ The following Lemma can be found in~\cite{MR1}.

\begin{lemma}\label{gon}
Let $P$ be a graded poset of odd rank such that every proper
interval of $P$ is Eulerian. Then $P$ is an Eulerian poset.
\end{lemma}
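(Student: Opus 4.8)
The plan is to reduce the statement to a single numerical identity and then exploit a two-sided symmetry of the Möbius function. Write $n=\rho(\hat{1})$ for the (odd) rank of $P$, assume as usual that $P$ has a $\hat{0}$ and $\hat{1}$ (so $P=[\hat{0},\hat{1}]$), and let $W_k$ denote the number of elements of rank $k$, so that $W_0=W_n=1$. Since every \emph{proper} interval of $P$ is Eulerian by hypothesis, the only interval whose Euler--Poincaré relation remains to be verified is $P$ itself. Hence it suffices to prove that the alternating sum $S:=\sum_{k=0}^{n}(-1)^k W_k$ vanishes, or equivalently, in Möbius-function language, that $\mu(\hat{0},\hat{1})=(-1)^n$.

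The key idea is to evaluate $\mu(\hat{0},\hat{1})$ twice, using the defining recursion of the Möbius function once from below and once from above. From $\sum_{\hat{0}\le z\le\hat{1}}\mu(\hat{0},z)=0$, together with the observation that each $[\hat{0},z]$ with $z<\hat{1}$ is a proper interval and therefore Eulerian with $\mu(\hat{0},z)=(-1)^{\rho(z)}$, I would obtain
\[
\mu(\hat{0},\hat{1})=-\sum_{k=0}^{n-1}(-1)^k W_k=(-1)^n-S.
\]
Dually, from $\sum_{\hat{0}\le z\le\hat{1}}\mu(z,\hat{1})=0$, together with the fact that each $[z,\hat{1}]$ with $z>\hat{0}$ is proper and hence Eulerian with $\mu(z,\hat{1})=(-1)^{\,n-\rho(z)}=(-1)^n(-1)^{\rho(z)}$, I would obtain
\[
\mu(\hat{0},\hat{1})=-(-1)^n\sum_{k=1}^{n}(-1)^k W_k=(-1)^n-(-1)^nS.
\]
Equating the two expressions gives $S=(-1)^n S$, i.e. $\bigl(1-(-1)^n\bigr)S=0$.

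At this last step the odd-rank hypothesis does all the work: for odd $n$ one has $1-(-1)^n=2\neq 0$, forcing $S=0$, which is exactly the Euler--Poincaré relation for $P$; substituting back then yields $\mu(\hat{0},\hat{1})=(-1)^n$, so $P$ is Eulerian. The computation is otherwise routine, so the one genuinely essential point — and the part I would take care to highlight — is precisely that the coefficient $1-(-1)^n$ is nonzero only in the odd case. (For even $n$ the two evaluations become identical and give no information, consistent with the fact that the conclusion can fail in that parity.) The only bookkeeping to check is that the intervals $[\hat{0},z]$ with $z<\hat{1}$ and $[z,\hat{1}]$ with $z>\hat{0}$ are indeed proper, which holds since each omits $\hat{1}$ or $\hat{0}$ respectively, so the Eulerian hypothesis applies to all of them.
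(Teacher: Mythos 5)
Your proof is correct. The paper itself gives no proof of this lemma, simply citing Ehrenborg and Readdy~\cite{MR1}, and your argument --- evaluating $\mu(\hat{0},\hat{1})$ once from below via $\sum_{\hat{0}\le z\le \hat{1}}\mu(\hat{0},z)=0$ and once from above via $\sum_{\hat{0}\le z\le \hat{1}}\mu(z,\hat{1})=0$, then using that $1-(-1)^{n}=2\neq 0$ precisely when the rank $n$ is odd --- is the standard proof of this fact, essentially the one found in~\cite{MR1}.
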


\

\begin{lemma}\label{binomial3}Let $P$ be a Eulerian binomial poset of rank
$3$. Then the factorial function $B(n)$ for $1\leq n \leq 3$ and the
poset $P$ satisfy the following conditions:
\begin{enumerate}
\item[$(i)$] $B(2)=2$ and $B(3)=2q$, where $q$ is a positive
integer such that $q\geq 2.$ \item[$(ii)$] There is a list of
integers $q_1,\dots ,q_r$, $q_i\geq 2$, such that $P =
\boxplus_{i=1\dots k} P_{q_{i}}$, where $P_{q_{i}}$ is the face
lattice of $q_{i}$-gon.
\end{enumerate}
\end{lemma}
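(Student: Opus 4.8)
The plan is to first pin down the factorial function and then read off the structure from the incidence relation between atoms and coatoms.

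For part $(i)$, note that $B(1)=1$ automatically, since a $1$-interval is a single covering relation. For a $2$-interval $[x,y]$ the only ranks occurring are $0,1,2$; counting elements of each rank via the formula $B(2)/(B(k)B(2-k))$ and imposing the Euler-Poincar\'e relation forces the number of rank-$1$ elements to equal the number of elements of even rank, namely $2$. Hence every $2$-interval has exactly two atoms and $B(2)=2$. Now in the whole poset $P=[\hat 0,\hat 1]$, which is a $3$-interval, the number of atoms and the number of coatoms are both $B(3)/(B(1)B(2))=B(3)/2$; since these are integers, $B(3)$ is even, say $B(3)=2q$. Finally each coatom $c$ spans a $2$-interval $[\hat 0,c]$, which by the previous computation contains exactly two atoms of $P$; hence $P$ has at least two atoms, giving $q\ge 2$. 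This proves $(i)$.

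For part $(ii)$, I would encode $P$ as a bipartite graph $G$ whose two vertex classes are the $q$ atoms and the $q$ coatoms of $P$, with an edge joining an atom $a$ to a coatom $c$ precisely when $a<c$. Since $P$ has rank $3$ with unique $\hat 0$ and $\hat 1$, the graph $G$ together with $\hat 0$ and $\hat 1$ recovers $P$ completely. The key local computation is $2$-regularity of $G$: for a coatom $c$ the interval $[\hat 0,c]$ is a $2$-interval and so lies above exactly two atoms, while for an atom $a$ the interval $[a,\hat 1]$ is a $2$-interval and so lies below exactly two coatoms. Thus every vertex of $G$ has degree exactly $2$.

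A finite $2$-regular graph is a disjoint union of cycles, and since $G$ is bipartite every cycle has even length; as $P$ is a poset there are no repeated covering relations, so $G$ is simple and each cycle has length at least $4$. A cycle of length $2q_i$ then consists of $q_i$ atoms and $q_i$ coatoms arranged exactly as the vertices and edges of a $q_i$-gon, i.e. it is the rank-$1$ and rank-$2$ portion of the face lattice $P_{q_i}$ with $q_i\ge 2$. Reattaching $\hat 0$ below all atoms and $\hat 1$ above all coatoms identifies the minimal and maximal elements of these polygon lattices, which is precisely the operation $\boxplus$; hence $P=\boxplus_{i=1\dots r}P_{q_i}$ with each $q_i\ge 2$, and $q=\sum_i q_i$.

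The routine parts are the rank counts; the step carrying the real content is the translation of the $2$-regular bipartite incidence graph into a disjoint union of polygon boundaries, together with ruling out a degenerate cycle of length $2$. This is exactly where the simplicity of $G$ — the fact that distinct order relations give distinct edges — is essential, and it is what guarantees $q_i\ge 2$.
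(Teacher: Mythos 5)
Your proof is correct and follows essentially the same route as the paper's (which defers Lemma~\ref{binomial3} to the argument for Lemma~\ref{rank3}): both deduce $B(2)=2$ from the Euler--Poincar\'e relation on $2$-intervals and then observe that the Hasse diagram of $P-\{\hat{0},\hat{1}\}$ is a $2$-regular bipartite graph, hence a disjoint union of even cycles of lengths $2q_1,\dots,2q_r$, yielding $P=\boxplus_{i}P_{q_i}$. Your explicit remarks on simplicity of the incidence graph and the exclusion of length-$2$ cycles are a small but welcome refinement of the paper's terser statement.
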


\begin{proof} The proof is omitted. It is a  consequence of Theorem
~\ref{rank3}.
\end{proof}
%Consider an Eulerian binomial poset $P$ of rank $3$. Now,
%$P-\{\hat{0},\hat{1}\}$ consist of elements of rank $1$ and rank $2$
%in poset $P$. Considering the Euler-Poincar\'e relation for
%intervals of size $2$ in poset $P$, it's easy to see that $B(2)=2$
%and every interval of size $2$ is isomorphic to $B_2$. So every
%element of rank $2$ is connect to two elements of rank $1$ and vise
%versa. As a conclusion $P- \{{\hat{0},\hat{1}}\}$ is just disjoint
%union of cycles of even lengths $2q_{1},\cdots, 2q_{r}$ where
%$q_{i}\geq 2$. We conclude poset $P$ is obtained by identifying all
%minimal elements of posets $P_{q_{1}},\cdots, P_{q_{r}}$ and
%identifying all of their maximal elements, where $P_{q_{i}}$ is the
%face lattice of $q_{i}$-gon. So $B(3)=2(q_1+\cdots+ q_r)$.

\begin{figure}[htp]
\begin{center}
\includegraphics[width=3.8in]{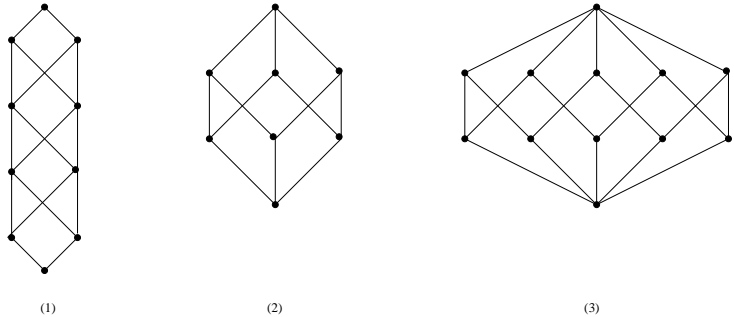}
\caption{(1): $T_5$, (2): $B_3$ and (3): $P_5$,  the face lattice of
a $5$-gon} \label{fig:1}
\end{center}
\end{figure}

\

 R. Ehrenborg and M. Readdy proved the  following two propositions in~\cite{MR1}.
 \

 \begin{proposition}\label{Tn}  Let $P$ be a  binomial poset of rank $n$ with factorial function $B(k)=2^{k-1}$ for $1\leq k\leq n$. Then the poset $P$ is isomorphic to the butterfly poset
$T_n$.
\end{proposition}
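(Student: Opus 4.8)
The plan is to prove that a binomial poset $P$ of rank $n$ with factorial function $B(k)=2^{k-1}$ must be isomorphic to $T_n$ by induction on the rank, exploiting the strong regularity forced by the factorial function together with the counting formula~\eqref{rank}. First I would extract the combinatorial data the hypothesis hands us: the atom function is $A(k)=B(k)/B(k-1)=2$ for $2\leq k\leq n$ and $A(1)=1$, so every $n$-interval has a \emph{unique} atom and a unique coatom, while each intermediate rank is reached in exactly two ways from the rank below. More precisely, formula~\eqref{rank} gives the number of elements of rank $k$ in any $n$-interval as $B(n)/(B(k)B(n-k)) = 2^{n-1}/(2^{k-1}2^{n-k-1}) = 2$ for every $1\leq k\leq n-1$. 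Thus every proper intermediate rank of any interval contains exactly two elements, which is precisely the rank-size profile of $T_n$.

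\emph{Base case.} For $n=2$ the poset has a single atom-layer of two elements between $\hat0$ and $\hat1$, which is exactly $T_2$ (an interval of a $2$-gon/digon). I would then set up the induction: assume every binomial poset with factorial function $2^{k-1}$ of rank $<n$ is a butterfly poset, so in particular every proper interval $[x,y]$ of $P$ with $\rho(x,y)<n$ is isomorphic to the corresponding $T_{\rho(x,y)}$.

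\emph{Inductive step.} The key structural point to establish is that the two elements $u_1,u_2$ of rank $k$ and the two elements $w_1,w_2$ of rank $k+1$ are \emph{fully connected}, i.e.\ $u_i<w_j$ for all $i,j\in\{1,2\}$; this is the defining covering relation $(k,i)\prec(k+1,j)$ of $T_n$ in Definition~\ref{butterfly}. To see this, I would count covers: since $A(k+1)=2$, each $w_j$ covers exactly two elements of rank $k$, and since there are only two such elements $u_1,u_2$, each $w_j$ must cover both; symmetrically each $u_i$ lies below both $w_1,w_2$. This forces the Hasse diagram between consecutive intermediate ranks to be the complete bipartite graph $K_{2,2}$, exactly as in the butterfly. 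Gluing these layers together with the unique atom covering $\hat0$ and the unique coatom below $\hat1$ (from $A(1)=1$ and its dual), I would build an explicit rank-preserving bijection $P\to T_n$ by sending the two rank-$k$ elements of $P$ to $(k,1),(k,2)$ consistently and verifying it is an order isomorphism; the consistency of the labeling across ranks follows from the $K_{2,2}$ connectivity just established. Finally, one notes $T_n=\Sigma^*(T_{n-1})$ (the remark at the end of Definition~\ref{butterfly}), which lets the induction feed cleanly: the structure below the top rank is $T_{n-1}$ by the inductive hypothesis, and the dual suspension attaches the correct top.

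The main obstacle I expect is not the local $K_{2,2}$ count but the \emph{global coherence} of the isomorphism: the counting formulas guarantee the correct number of elements at each rank and the correct local covering pattern, but I must rule out a ``twisted'' gluing in which the two chains through $P$ fail to assemble into the product structure $D_{n-1}\times\{1,2\}$. The delicate point is that in $T_n$ every maximal chain passes through one element of each intermediate rank and the elements thread together consistently, whereas a priori the two rank-$k$ elements might not extend to a globally consistent labeling. I would resolve this by fixing a maximal chain and using the $K_{2,2}$ connectivity to show any element of rank $k$ is comparable to exactly the ``right'' elements above and below, so the two-element fibers over each rank can be simultaneously $2$-colored in a compatible way; the verification that this coloring respects order at every pair of ranks is the technical heart of the argument, and it is exactly where the binomial (rather than merely rank-regular) hypothesis is used, since it forces \emph{all} intervals — not just maximal ones — to share the same factorial function.
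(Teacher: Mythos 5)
Your argument is essentially correct, and there is in fact no internal proof to compare it against: the paper quotes Proposition~\ref{Tn} from Ehrenborg and Readdy~\cite{MR1} without proof, so your write-up stands on its own. Its core is sound and is the natural argument: Eq.~\eqref{rank} gives exactly two elements at every rank $1\leq k\leq n-1$ of $P$ (indeed of every interval of length at least $2$), and since $A(k+1)=2$, each element of rank $k+1$ covers exactly two elements of rank $k$, hence must cover both of them; this forces the covering relations between consecutive intermediate ranks to be complete bipartite graphs $K_{2,2}$, with $\hat{1}$ above both coatoms and $\hat{0}$ below both atoms, which is precisely the Hasse diagram of $T_n$ from Definition~\ref{butterfly}. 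Note also that, consistent with the statement, no Eulerian hypothesis is ever used.

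Two corrections, neither fatal. First, your opening claim that every $n$-interval has a \emph{unique} atom and a unique coatom is false: $A(1)=1$ merely records $B(1)=1$, and every $m$-interval with $m\geq 2$ has exactly two atoms and two coatoms --- which is what your own computation with Eq.~\eqref{rank} shows and what your later steps actually use, so the slip is not load-bearing; but the phrase ``the unique atom covering $\hat{0}$'' in your gluing step should likewise read ``the two atoms.'' Second, the induction and the closing worry about a ``twisted gluing'' are unnecessary: once every pair of consecutive intermediate ranks is joined by the full $K_{2,2}$, any two elements of distinct ranks are comparable (by transitivity through the complete bipartite layers) and same-rank elements are incomparable (by gradedness), so \emph{any} rank-preserving bijection onto $T_n$ fixing $\hat{0}$ and $\hat{1}$ is automatically an order isomorphism. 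There is no labeling-consistency condition to verify, because the order relation of $T_n$ between distinct ranks is itself complete; the direct, induction-free counting argument already finishes the proof.
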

\

\begin{proposition}\label{Bn}
 Let $P$ be an Eulerian binomial poset of rank $n$ with factorial function $B(k)=k!$ for $1\leq k\leq n$. Then the poset $P$ is isomorphic to the boolean lattice $B_n$ of rank $n$.
\end{proposition}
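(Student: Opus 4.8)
The plan is to argue by induction on the rank $n$, exploiting the fact that a binomial poset inherits its factorial function on every interval: each proper interval $[x,y]$ of $P$ is itself a binomial poset with $B(k)=k!$, so by the inductive hypothesis it is isomorphic to a boolean lattice. The base cases $n\le 2$ are immediate, since from $A(k)=B(k)/B(k-1)=k$ a rank-$2$ interval has exactly $B(2)/(B(1)B(1))=2$ atoms and so is the diamond $B_2$. Only the binomial structure $B(k)=k!$ is really used; the Eulerian hypothesis is available but not essential here.

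First I would record the numerical consequences of $B(k)=k!$ that drive everything. By the formula~\eqref{rank} the number of elements of rank $k$ in any $m$-interval is $\binom{m}{k}$; in particular $P$ has exactly $n$ atoms, which I label $1,\dots,n$, and every rank-$2$ interval of $P$ is a copy of $B_2$ (the diamond condition). Applying the inductive hypothesis to the intervals $[a,\hat{1}]$ for atoms $a$, and to $[\hat{0},c]$ for coatoms $c$, each of these is a boolean lattice $B_{n-1}$. I then define the candidate isomorphism $\phi\colon P\to B_n$ by $\phi(x)=\{a : a\text{ an atom of }P,\ a\le x\}$. Since the atoms below $x$ are precisely the atoms of $[\hat{0},x]\cong B_{\rho(x)}$, we get $|\phi(x)|=\rho(x)$, so $\phi$ is rank-preserving and manifestly order-preserving. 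Because $P$ and $B_n$ have the same number $\binom{n}{k}$ of elements in each rank, it suffices to prove that $\phi$ is injective on each rank.

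The crux — and the step I expect to be the main obstacle — is this injectivity, i.e.\ showing that an element is determined by the set of atoms beneath it; this is exactly the rigidity that excludes ``folded'' posets such as the polygon lattices $P_q$, which satisfy the diamond condition but have too many atoms. I would reduce it to the rank-$2$ statement that \emph{any two atoms have exactly one common upper bound of rank $2$}. To prove this, count in two ways: each rank-$2$ element lies above exactly one pair of atoms by the diamond condition, so summing over the $\binom{n}{2}$ rank-$2$ elements shows the number of common rank-$2$ upper bounds, averaged over all $\binom{n}{2}$ atom-pairs, equals $1$. On the other hand, since $[a,\hat{1}]\cong B_{n-1}$ contributes $n-1$ of the $n$ coatoms of $P$, each atom lies under all but one coatom, so a short inclusion–exclusion shows that for $n\ge 3$ any two atoms lie under a common coatom $c$; inside $[\hat{0},c]\cong B_{n-1}$ they already have a rank-$2$ join. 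Hence every pair has at least one common rank-$2$ upper bound, and combined with average $1$ this forces exactly one.

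Finally I would propagate this rank-$2$ injectivity upward through the boolean intervals $[a,\hat{1}]$: given $x\neq y$ of rank $k\ge 3$ with $\phi(x)=\phi(y)$, pick an atom $a$ below both and pass to $[a,\hat{1}]\cong B_{n-1}$, whose atoms are the rank-$2$ elements covering $a$. By the uniqueness just established, the set of these elements lying below $x$ is determined by $\phi(x)$ alone, and likewise for $y$; thus $x$ and $y$ have the same atom-set in the boolean lattice $[a,\hat{1}]$, forcing $x=y$, a contradiction. This yields injectivity of $\phi$ at every rank, hence the isomorphism $P\cong B_n$. As an alternative to this hands-on argument, once the diamond condition and the atom count $n$ are in place one may simply invoke Stanley's recognition lemma for boolean lattices (\cite{G}, Lemma~8).
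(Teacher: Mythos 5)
The paper itself contains no proof of Proposition~\ref{Bn}: it is quoted from Ehrenborg and Readdy~\cite{MR1}, so there is no internal argument to compare against. Judged on its own, your proof is essentially correct and self-contained, and it is a nice route: the counting formula~\eqref{rank} pins down all rank sizes, the double count (each of the $\binom{n}{2}$ rank-$2$ elements sits over exactly one atom-pair, while each of the $\binom{n}{2}$ atom-pairs sits under a common coatom $c$ and hence has a rank-$2$ join inside $[\hat{0},c]\cong B_{n-1}$) forces a bijection between atom-pairs and rank-$2$ elements, and this rigidity propagates through the boolean intervals $[a,\hat{1}]$ to make every element determined by its atom set. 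Your observation that the Eulerian hypothesis is never used is also sound --- every step is purely binomial --- which is consistent with the paper stating the companion Proposition~\ref{Tn} without the Eulerian assumption.

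Two points deserve patching. First, your reduction ``it suffices to prove $\phi$ is injective on each rank'' is not literally true: a rank- and order-preserving bijection need not have an order-preserving inverse. Here it is a one-line fix with tools you already have: if $\phi(x)\subseteq\phi(y)$, take the element $x'$ of $[\hat{0},y]\cong B_{\rho(y)}$ whose atom set is $\phi(x)$; then $\phi(x')=\phi(x)$, so injectivity gives $x'=x$ and hence $x\leq y$. Second, the closing aside that one may ``simply invoke'' Stanley's recognition lemma (\cite{G}, Lemma~8) is too quick: that lemma also requires connectivity of all open intervals of rank at least $4$, including $(\hat{0},\hat{1})$, which does not follow from the diamond condition and the atom count alone (the posets $\boxplus^{k}(B_n)$ show the top open interval of an Eulerian binomial poset can be disconnected; they are excluded here only because their factorial function is $k\cdot n!$ at the top rank). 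Your own coatom-sharing count supplies the needed connectivity, so the shortcut works, but only after that observation is made explicit.
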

\

It is easy to obtain the following lemma for Eulerian binomial
posets by applying the proof of Lemma $2.12$ in~\cite{MR1}.

\

\begin{lemma}\label{An}Let $P^{'}$ and $P$ be two Eulerian binomial posets of rank $2m+2$,
 $m\geq 2$, having atom functions ${A}^{'}(n)$ and $A(n)$,
respectively, which agree for $n\leq 2m$. Then the following
equality holds:
\begin{equation}\label{f4}
\frac{1}{A(2m+1)}\left(1-\frac{1}{A(2m+2)}\right)=\frac{1}{
{A}^{'}(2m+1)}\left(1-\frac{1}{{A}^{'}(2m+2)}\right).
\end{equation}
\end{lemma}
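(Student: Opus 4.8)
The plan is to read the identity directly off the Euler--Poincar\'e relation applied to a top interval of rank $N=2m+2$ in each poset, using the hypothesis that the two atom functions, and hence the two factorial functions, agree below rank $2m$.

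First I would record the Euler--Poincar\'e relation for an Eulerian binomial poset of rank $N=2m+2$. Using the rank count \eqref{rank}, the relation is $\sum_{k=0}^{N}(-1)^{k}B(N)/\bigl(B(k)B(N-k)\bigr)=0$; dividing by $B(N)$ gives
\[
\sum_{k=0}^{2m+2}\frac{(-1)^{k}}{B(k)\,B(2m+2-k)}=0 ,
\]
and the identical relation holds for $P'$ with $B$ replaced by $B'$. Since $N$ is even, the summand is invariant under $k\mapsto N-k$, so I may pair the boundary terms. Recalling $B(0)=B(1)=1$, the terms $k=0,2m+2$ together contribute $2/B(2m+2)$ and the terms $k=1,2m+1$ together contribute $-2/B(2m+1)$.

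The key observation is that every remaining term, with $2\le k\le 2m$, involves only the values $B(2),\dots,B(2m)$, because both $k$ and $2m+2-k$ then lie in $\{2,\dots,2m\}$. By hypothesis $A(n)=A'(n)$ for $n\le 2m$, hence $B(n)=B'(n)$ for $n\le 2m$, so this ``middle'' sum $S=\sum_{k=2}^{2m}(-1)^{k}/\bigl(B(k)B(2m+2-k)\bigr)$ is literally the same quantity for $P$ and for $P'$. Subtracting the two Euler--Poincar\'e relations therefore cancels $S$ and, after dividing by $2$, leaves
\[
\frac{1}{B(2m+2)}-\frac{1}{B(2m+1)}=\frac{1}{B'(2m+2)}-\frac{1}{B'(2m+1)} .
\]

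Finally I would convert this into the asserted identity in the atom functions. Writing $B(2m+2)=A(2m+2)B(2m+1)$ and $B(2m+1)=A(2m+1)B(2m)$, the left-hand side becomes $-\frac{1}{A(2m+1)B(2m)}\bigl(1-\frac{1}{A(2m+2)}\bigr)$, and similarly on the right with $A'$ in place of $A$ and $B'(2m)$ in place of $B(2m)$. Since $B(2m)=B'(2m)$ is a common nonzero factor, multiplying through by $-B(2m)$ yields exactly \eqref{f4}. The only points needing care are the boundary bookkeeping (using $B(0)=B(1)=1$ and the evenness of $N$, so the four boundary terms collapse to $2/B(2m+2)-2/B(2m+1)$) and checking that no index in the middle sum exceeds $2m$; with $m\ge 2$ both are immediate, so the argument is short once the relation is set up rather than presenting a genuine obstacle.
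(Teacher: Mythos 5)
Your proof is correct, and it is essentially the argument the paper has in mind: the paper omits the proof and defers to Lemma 2.12 of Ehrenborg--Readdy \cite{MR1}, whose derivation is exactly this subtraction of the Euler--Poincar\'e relations for the two rank-$(2m+2)$ intervals, cancellation of the middle terms (which agree since $B(n)=B'(n)$ for $n\le 2m$), and rewriting of the boundary terms via $B(2m+2)=A(2m+2)A(2m+1)B(2m)$. Your bookkeeping of the four boundary terms and the index range of the middle sum is accurate, so nothing further is needed.
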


\

\begin{lemma}\label{T4} Every Eulerian binomial poset $P$ of rank $4$ is either isomorphic to $T_4$ or~$B_4.$
\end{lemma}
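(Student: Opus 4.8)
The plan is to pin down the factorial function $B(k)$ for $1 \leq k \leq 4$ using the Euler-Poincar\'e relation, and then invoke Propositions~\ref{Tn} and~\ref{Bn} to identify the structural type. First I would record the normalizations that hold in any Eulerian binomial poset: $B(0)=B(1)=1$, and every $2$-interval is a diamond so $B(2)=2$ (an Eulerian interval of rank $2$ has exactly two elements strictly between its endpoints, forcing $A(2)=2$). Thus the only unknowns are $A(3)=B(3)/B(2)$ and $A(4)=B(4)/B(3)$. The key constraint is that every $4$-interval must satisfy the Euler-Poincar\'e relation, which by equation~(1) reads
\begin{equation}\label{ep4}
\sum_{k=0}^{4} (-1)^k \frac{B(4)}{B(k)B(4-k)} = 0.
\end{equation}

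Next I would impose the additional requirement, often overlooked, that the rank-counts in~\eqref{ep4} must be \emph{positive integers}, since they count elements of a genuine poset. In particular the number of rank-$1$ elements (atoms) and the number of rank-$3$ elements (coatoms) of a $4$-interval are $B(4)/B(3)=A(4)$ and $B(4)/(B(3)B(1))=A(4)$ as well, while the number of rank-$2$ elements is $B(4)/B(2)^2 = B(4)/4$. Moreover, the rank-$3$ truncation of a $4$-interval is itself an Eulerian binomial poset of rank $3$, so by Lemma~\ref{binomial3} its factorial function satisfies $B(3)=2q$ with $q \geq 2$ an integer; equivalently $A(3)=q \geq 2$. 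Substituting $B(2)=2$, $B(3)=2q$, and writing $B(4)=2q\cdot A(4)$ into~\eqref{ep4}, the relation becomes a single Diophantine equation relating the two integers $q = A(3) \geq 2$ and $a := A(4) \geq 1$, together with the integrality of the middle rank-count $B(4)/4$.

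The main obstacle, and the crux of the argument, is to show that this Diophantine system forces $(A(3),A(4)) = (3,4)$ or $(A(3),A(4)) = (2,2)$, and nothing else. Carrying out the substitution, I expect~\eqref{ep4} to simplify to a constraint of the shape $2a(1 - 2/q) = q(1 - 2/a)$ or similar after clearing the common factor $B(4)=2qa$; solving for the integer pairs with $q\geq 2$, $a\geq 2$ and $4 \mid B(4)$ should leave exactly these two solutions. Once the factorial functions are determined, the conclusion is immediate: in the first case $B(k)=k!$ for $1\leq k\leq 4$, so Proposition~\ref{Bn} gives $P \cong B_4$; in the second case $B(k)=2^{k-1}$ for $1\leq k\leq 4$, so Proposition~\ref{Tn} gives $P \cong T_4$. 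The only delicate point is confirming that no spurious solution to the numerical relation survives the integrality constraints, which is why I would carry the positive-integer conditions on \emph{all} rank-counts explicitly through the computation rather than solving the polynomial relation over the rationals alone.
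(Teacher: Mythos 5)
Your proposal takes essentially the same route as the paper's proof: $B(2)=2$ and $B(3)=2q$ with $q\geq 2$ from Lemma~\ref{binomial3}, rank counts in a $4$-interval via Eq.~(\ref{rank}), the Euler--Poincar\'e relation $2+r=2a$ (where $a$ is the common number of atoms and coatoms and $r=B(4)/4$), integrality of the counts, and finally Propositions~\ref{Tn} and~\ref{Bn} to identify $T_4$ and $B_4$. The one computation you left as an expectation does go through, though not in the form you guessed: substituting $r=qa/2$ gives $2+qa/2=2a$, i.e.\ $a(4-q)=4$, whose only solutions with integers $q\geq 2$, $a\geq 1$ are $(q,a)=(2,2)$ and $(q,a)=(3,4)$ --- exactly matching the paper's case analysis, so no spurious solutions need to be excluded by the extra divisibility condition $4\mid B(4)$.
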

\begin{proof}
Applying Lemma~\ref{binomial3} gives $B(3)=2k$, where $k\geq 2$.
Eq.(\ref{q1}) implies that the number of elements of rank one is the
same as the number of elements of rank three in $P$. We denote this
number by $n.$ Hence
\begin{equation}\label{3}
n=\frac{B(4)}{B(3)B(1)}=\frac{B(4)}{B(3)}.
\end{equation}
We can also enumerate the number $r$ of elements of rank $2$ as
follows:
\begin{equation}
r=\frac{B(4)}{B(2)B(2)}.
\end{equation}
 The Euler-Poincar\'e relation on intervals of length four is $2+r=2n$.
 By enumerating the number of maximal chains, we conclude $B(4)=rB(2)B(2)=nB(3)$ and since always $B(2)=2$, we
have $2r=kn$. The Euler-Poincar\'e relation implies that $\frac{k
n}{2}+2=2n$, and so $k<4$. We have the following cases.
\begin{enumerate}
\item[$(i)$]$k=1$.  $\frac{n}{2}+2=2n$, so $n=\frac{4}{3}$. This
case is not possible. \item[$(ii)$]$k=2$.  $n+2=2n$, so $n=2$ and
$r=2$. We conclude that $B(k)=2^{k-1}$, for $1\leq k\leq 4$. By
Proposition~\ref{Tn}, $P=T_4$. \item[$(iii)$] $k=3$.
$\frac{3n}{2}+2=2n$, so $n=4$ and $r=6$. Thus $B(k)=k!$, for $1\leq
k\leq 4$. By Proposition~\ref{Bn}, $P=B_4$.
\end{enumerate}
\end{proof}

In the following theorem we obtain the structure of Eulerian
binomial posets of even rank.

\

\begin{theorem}\label{even} Every Eulerian binomial poset $P'$ of even rank $n=2m \geq 4$ is either isomorphic to $T_n$ or
$B_n$ (the butterfly poset of rank $n$ or boolean lattice of rank
$n$).
\end{theorem}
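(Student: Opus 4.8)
The plan is to argue by induction on the rank, the base case rank $4$ being exactly Lemma~\ref{T4}. So I would take $P'$ of even rank $2m+2$ with $m\ge 2$ and assume, as inductive hypothesis, that every Eulerian binomial poset of rank $2m$ is isomorphic to $T_{2m}$ or $B_{2m}$. Since $P'$ has rank $2m+2>2m$, each interval $[x,y]$ with $\rho(y)-\rho(x)=2m$ is a proper interval, hence an Eulerian binomial poset of rank $2m$, and by the inductive hypothesis is isomorphic to $T_{2m}$ or to $B_{2m}$. Because the factorial function of $P'$ is a single well-defined function and $T_{2m},B_{2m}$ have distinct numbers of maximal chains ($2^{2m-1}\neq(2m)!$ for $m\ge 2$), all such intervals are of the same type. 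Thus either $B(k)=2^{k-1}$ for every $k\le 2m$ (Case A) or $B(k)=k!$ for every $k\le 2m$ (Case B), and the only undetermined values are $A(2m+1)$ and $A(2m+2)$.

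Next I would pin down one relation between these two unknowns. Applying Lemma~\ref{An} to $P'$ together with the model poset $M=T_{2m+2}$ in Case A (resp. $M=B_{2m+2}$ in Case B), whose atom function agrees with that of $P'$ up to $2m$, gives
\[
\frac{1}{A(2m+1)}\left(1-\frac{1}{A(2m+2)}\right)=\frac14 \ \ \text{(Case A)},\qquad
\frac{1}{A(2m+1)}\left(1-\frac{1}{A(2m+2)}\right)=\frac{1}{2m+2}\ \ \text{(Case B)},
\]
since $A(2m+1)=A(2m+2)=2$ for $T_{2m+2}$ and $A(2m+1)=2m+1,\ A(2m+2)=2m+2$ for $B_{2m+2}$. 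Writing $a=A(2m+1)$, $b=A(2m+2)$ (positive integers), Case A reduces to $b(4-a)=4$, so $a<4$, while Case B forces $b\mid(2m+2)$ and $a=(2m+2)-(2m+2)/b$.

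The remaining and crucial step is to discard the spurious integer solutions. In Case B this is quick: by Equation~(\ref{rank}) the number of atoms of $P'$ equals $A(2m+2)=b$, while any rank-$2m$ element $y$ satisfies $[\hat 0,y]\cong B_{2m}$ (Case B, Proposition~\ref{Bn}), whose $2m$ atoms are atoms of $P'$; hence $b\ge 2m$. Every proper divisor of $2m+2$ is at most $m+1<2m$, so $b=2m+2$, forcing $a=2m+1$, $B(k)=k!$ for all $k\le 2m+2$, and $P'\cong B_{2m+2}$ by Proposition~\ref{Bn}. Case A instead requires a genuinely structural argument, and this is the hard part. I would work inside a rank-$(2m+1)$ interval $[\hat 0,z]$ ($z$ a coatom) and record the bipartite incidence between its atoms and its rank-$2$ elements. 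Each rank-$2$ element lies above exactly two atoms (as $B(2)=2$), and each atom lies below exactly two rank-$2$ elements (since $[x,z]\cong T_{2m}$ has two atoms); thus the incidence is a $2$-regular multigraph on the atom set, a disjoint union of cycles. For each rank-$3$ element $w$ one has $[\hat 0,w]\cong T_3$ (Proposition~\ref{Tn}), in which the two rank-$2$ elements below $w$ lie above the same pair of atoms, i.e.\ they are parallel edges. Since every rank-$2$ element lies below some rank-$3$ element (as $[u,z]\cong T_{2m-1}$ has atoms), every edge of the multigraph belongs to a digon, so all cycles are digons and $A(2m+1)$ is even. Combined with $a<4$ this yields $a=2$, hence $b=2$, so $B(k)=2^{k-1}$ for all $k\le 2m+2$ and $P'\cong T_{2m+2}$ by Proposition~\ref{Tn}.

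The main obstacle, as indicated, is Case A: the Euler--Poincar\'e relation (encoded in Lemma~\ref{An}) supplies only one equation for the two unknowns $A(2m+1),A(2m+2)$ and admits the parasitic solution $(3,4)$, which in fact satisfies the Euler--Poincar\'e relation on every interval, so it cannot be excluded by counting alone. Excluding it requires the parity argument above, exploiting the rigid ``complete bipartite'' shape of the $3$-intervals $T_3$ sitting inside a $(2m+1)$-interval. I would take care to run the digon argument in a rank-$(2m+1)$ interval (where the atom--rank-$2$ incidence is genuinely $2$-regular) rather than in $P'$ itself, and to note that $m\ge 2$ is exactly what guarantees the existence of the rank-$3$ elements the argument consumes.
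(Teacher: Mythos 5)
Your proposal is correct and takes essentially the same route as the paper: induction on the rank with Lemma~\ref{T4} as base case, splitting into the two cases via the well-defined factorial function on $2m$-intervals, applying Lemma~\ref{An} against the model posets $B_{2m+2}$ and $T_{2m+2}$, and concluding with Propositions~\ref{Bn} and~\ref{Tn}. The only differences are in the fine detail of excluding the spurious solutions: in the boolean case the paper squeezes $A'(2m+1)$ between $2m+1-\frac{1}{m}$ and $2m+2$ using $A'(2m)\leq A'(2m+2)$ where you use divisibility of $2m+2$ plus the atom-count bound, and in the butterfly case the paper kills $A'(2m+1)=3$ by exhibiting an atom covered by three rank-$2$ elements, which is the same structural fact your digon/parity argument packages slightly more generally (evenness of $A(2m+1)$).
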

\begin{proof}
We proceed by induction on $m$. The claim is true for $2m=4$, by
Lemma~\ref{T4}. Assume that the theorem holds for
 Eulerian binomial posets of rank $2m\geq 4$. We wish to show that it also holds for Eulerian binomial posets of rank $2m+2$.

Let $P'$ be a Eulerian binomial poset of rank $2m+2$. The factorial
and atom function of this poset are denoted by $B'(n)$ and $A'(n)$,
respectively. By Lemma~\ref{T4}, every interval of size $4$ is
either isomorphic to $B_4$ or $T_4$. So the factorial function
$B'(3)$ of intervals of rank $3$, can only take the values $4,6$ and
we have the following two cases:
\

\begin{enumerate}
\item[$\bullet$]$B'(3)=6$. We wish to show that $P'$ is
isomorphic to $B_{2m+2}$ by induction on $m$. By Lemma~\ref{T4}, the
claim is true for $2m=4.$ By the induction hypothesis, the claim
holds for $n=2m$, and we wish to prove it for $n=2m+2$. Let $P=
B_{2m+2}$, so $P$ has the atom function $A(n)=n$ for $1\leq n\leq
2m+2$.
 By the induction hypothesis, $A^{'}(j)=A(j)=j$ for $j\leq 2m$. Now Lemma~\ref{An} implies that
\begin{equation}\label{f5}
   \frac{1}{A(2m+1)}\left(1-\frac{1}{A(2m+2)}\right)=\frac{1}{
A^{'}(2m+1)}\left(1-\frac{1}{A^{'}(2m+2)}\right).
\end{equation}
Since $2m=A^{'}(2m)\leq A^{'}(2m+2)<\infty$, we obtain the following
equation:
\begin{equation}
2m+1-\frac{2}{2m}< A^{'}(2m+1)< 2m+2.
\end{equation}
Thus $A^{'}(2m+1)=2m+1$. Eq.(\ref{f5}) implies that
$A^{'}(2m+2)=2m+2$. By Proposition~\ref{Bn}, the poset $P'$ is
isomorphic to $B_{2m+2}$, as desired.

\

\item[$\bullet$]$\acute{B}(3)=4$. We claim that the poset
$P^{'}$  of rank $n=2m+2$ is isomorphic to $T_n.$ By the induction
hypothesis, our claim holds for even $n\leq 2m$, and we would like
to prove it for $n=2m+2.$
 Consider the poset $T_{2m+2}$. This poset has the atom function
$A(n)=2$ for $1\leq n\leq 2m+2$. By the induction hypothesis the
intervals of length $2m$ in  $P^{'}$ are isomorphic to $T_{2m}$, so
$A^{'}(j)=2$ for $1\leq j\leq 2m$.

Clearly $2= A^{'}(2m)\leq A^{'}(2m+2)< \infty $. Eq.(\ref{f5})
implies that $2\leq A^{'}(2m+1)<4$. The case $A^{'}(2m+1)=3$ is
forbidden by similar idea that appeared in the proof of Theorem
$2.16$ in~\cite{MR1}: Assume that $A^{'}(2m+1) = 3$. Let $[x,y]$ be
a $(2m+1)$-interval in $P^{'}$. For $1 \leq k \leq 2m$ there are
$B^{\prime}(2m+1)/(B^{\prime}(k) \cdot B^{\prime}(2m+1-k))
  =
 3 \cdot 2^{2m-1} / (2^{k-1} \cdot 2^{2m-k}) = 3$
elements of rank $k$ in this interval. Let~$c$ be a coatom. The
interval $[x,c]$ has two atoms, say $a_{1}$ and $a_{2}$. Moreover,
the interval $[x,c]$ has two elements of rank $2$, say $b_{1}$ and
$b_{2}$. Moreover we know that each $b_{j}$ covers each~$a_{i}$.
Let~$a_{3}$ and~$b_{3}$ be the third atom, respectively the third
rank $2$ element, in the interval $[x,y]$. We know that~$b_{3}$
covers two atoms in $[x,y]$. One of them must be $a_{1}$ or $a_{2}$,
say $a_{1}$. But then $a_{1}$ is covered by the three elements
$b_{1}$, $b_{2}$ and $b_{3}$. But this contradicts the fact that
each atom is covered by exactly two elements. Hence this rules out
the case $A^{'}(2m+1)=3$.

 Hence $A^{'}(2m+1)=A^{'}(2m+2)=2$.
Lemma~\ref{Tn} implies that $P^{'}$ is isomorphic to $T_{2m+2}.$
\end{enumerate}

\end{proof}

\

\begin{theorem}\label{odd} Let $P$ be an Eulerian binomial poset of odd rank $n=2m+1 \geq
5$. Then $P$ satisfies one of the following conditions:
\begin{enumerate}
\item[$(i)$] There is a positive integer $k$ such that  $P$ is
the $k$-summation of the boolean lattice of rank $n$. In other
words, $P={\boxplus^k}{(B_n)}.$ \item[$(ii)$] There is a positive
integer $k$ such that $P$ is the $k$-summation of the butterfly
poset of rank $n$. In other words, $P={\boxplus^k}{(T_n)}.$

\end{enumerate}
\end{theorem}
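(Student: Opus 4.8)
The plan is to first pin down the factorial function on all proper intervals, then cut $P$ into connected pieces and show each piece is rigid. Since $n=2m+1\ge 5$, every maximal proper interval $[\hat 0,c]$ (for $c$ a coatom) and $[a,\hat 1]$ (for $a$ an atom) has even rank $2m\ge 4$, so by Theorem~\ref{even} each is isomorphic to $B_{2m}$ or $T_{2m}$. Because $P$ is binomial, $B(j)$ is a single well-defined number for each $j$, so these maximal proper intervals are all of one type; hence either $B(j)=j!$ for all $j\le 2m$ (the \emph{boolean case}) or $B(j)=2^{j-1}$ for all $j\le 2m$ (the \emph{butterfly case}), and by Proposition~\ref{Bn} (resp. Proposition~\ref{Tn}) every interval of rank at most $2m$ is literally $B_j$ (resp. $T_j$). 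Since all proper intervals are Eulerian, Lemma~\ref{gon} makes $P$ automatically Eulerian and, crucially, leaves the top value $B(2m+1)$ — equivalently the number $A(2m+1)$ of coatoms — unconstrained by the Euler--Poincar\'e relation. This free parameter is exactly what the integer $k$ will record, in contrast to the even-rank case of Theorem~\ref{even}, where the Euler relation at the top forces $k=1$.

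Next I would perform a connectivity reduction. Let $\hat P=P\setminus\{\hat 0,\hat 1\}$ and let $C_1,\dots,C_k$ be the connected components of the Hasse diagram of $\hat P$, and set $Q_i=\{\hat 0\}\cup C_i\cup\{\hat 1\}$. I would check that any interval of $P$ lying strictly below $\hat 1$ or strictly above $\hat 0$ is contained in a single $Q_i$; consequently every proper interval of $Q_i$ is already an interval of $P$, so each $Q_i$ is again an Eulerian binomial poset of rank $n$ whose factorial function agrees with $B_n$ (resp. $T_n$) through rank $n-1$. By construction distinct $Q_i$ meet only in $\{\hat 0,\hat 1\}$, so once each $Q_i$ is identified with $B_n$ (resp. $T_n$) we get $P=\boxplus_{i=1\dots k}Q_i=\boxplus^k(B_n)$ (resp. $\boxplus^k(T_n)$) as in Definition~\ref{sum}. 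The global factorial function determines uniformly which of the two types all pieces have, so $P$ is genuinely a $k$-summation of a single poset.

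The heart of the argument, and the main obstacle, is the final claim that a connected piece $Q_i$ is isomorphic to $B_n$ in the boolean case (resp. $T_n$ in the butterfly case). By Proposition~\ref{Bn} (resp. Proposition~\ref{Tn}) it suffices to show the top factorial of $Q_i$ equals $n!$ (resp. $2^{n-1}$); since each coatom $c$ of $Q_i$ carries a full $B_{n-1}$ (resp. $T_{n-1}$) beneath it, the number of maximal chains of $Q_i$ equals $(\#\text{coatoms of }Q_i)\cdot(n-1)!$ (resp. $\cdot\,2^{n-2}$), so everything reduces to showing a connected piece has exactly as many coatoms as $B_n$ (resp. $T_n$), namely $n$ (resp. $2$). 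I would prove this by exploiting local lattice rigidity: fix a coatom $c_0$, whose down-interval is $B_{n-1}$ with atoms $a_1,\dots,a_{n-1}$; using that each rank-$(n-2)$ element $z$ lies below exactly two coatoms (because $B(2)=2$ forces the $2$-interval $[z,\hat 1]$ to have exactly two elements of its middle rank) together with $[a,\hat 1]\cong B_{n-1}$ for each atom $a$, I would propagate a subset-labelling across the component and show the atom--coatom incidence is forced to be the complete boolean pattern, with no room for additional elements once connectivity is imposed. The delicate point is ruling out a connected $(n-1)$-regular atom--coatom incidence strictly larger than that of $B_n$; I expect to resolve this by descending through the corank-$2$ intervals $[z,\hat 1]$, which rigidly control how neighboring coatoms overlap. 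In the butterfly case the same conclusion follows from the branching constraint that each atom is covered by exactly two elements, exactly as in the proof of Theorem~\ref{even}.
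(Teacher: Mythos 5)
Your skeleton coincides with the paper's: reduce all proper intervals to $B_j$ or $T_j$ via Theorem~\ref{even} and Propositions~\ref{Bn} and~\ref{Tn}, cut $P\setminus\{\hat{0},\hat{1}\}$ into connected components, cap each component with $\hat{0}$ and $\hat{1}$ to get an Eulerian binomial poset $Q$ of rank $n$, and identify $Q$ by computing its top factorial from its coatom count. But the step you defer (``I would prove this by exploiting local lattice rigidity\dots I expect to resolve this'') is exactly where the entire content of the paper's proof lives, and your sketch lacks the ideas needed to close it. Regularity of the atom--coatom incidence alone cannot rule out a connected incidence pattern strictly larger than that of $B_n$; you need quantitative control of how coatom atom-sets overlap, plus one global count you never mention. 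Writing $A_i=S(a_i)$ for the set of atoms below the coatom $a_i$ of $Q$, the paper establishes: (a) the number of atoms of $Q$ equals its number $t$ of coatoms, by the rank-count formula Eq.~(\ref{rank}) applied to $Q$ --- an ingredient absent from your proposal and indispensable below; (b) the intersection graph on the $A_i$ is connected (by component connectivity) and in fact complete, because whenever $A_i\cap A_j\neq\emptyset$ an atom $x\in A_i\cap A_j$ gives, inside $[x,\hat{1}]\cong B_{2m}$, a rank-$(2m-2)$ element covered by both coatoms, forcing $|A_i\cap A_j|\geq 2m-1$, and two $(2m-1)$-subsets of a $2m$-set must meet; (c) $|A_i\cap A_j|=2m$ is impossible, since it would yield two distinct rank-$(2m-1)$ elements of $Q$ covered by the same two coatoms, hence two rank-$(2m-2)$ elements of some $[x_s,\hat{1}]\cong B_{2m}$ covered by the same two rank-$(2m-1)$ elements, which cannot happen in a boolean lattice. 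Only with $|A_i|=2m$, $|A_i\cap A_j|=2m-1$ exactly, and (a) does the set-system argument close: if some $A_3$ introduced an atom outside $A_1\cup A_2$, then every $A_k$ would have the form $\{x_1,\dots,x_{2m-1},y_k\}$ with the $y_k$ distinct, giving $t+2m-1$ atoms and contradicting (a); hence every $A_k\subset A_1\cup A_2$, there are exactly $2m+1$ atoms, so $t=2m+1$, $B_Q(2m+1)=(2m+1)!$, and Proposition~\ref{Bn} yields $Q\cong B_{2m+1}$. Your ``propagate a subset-labelling'' plan would have to reprove precisely (a)--(c); as stated it is an intention, not an argument.

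The butterfly case has the same defect: your one-line appeal to ``the branching constraint\dots exactly as in the proof of Theorem~\ref{even}'' does not suffice, because the argument there excludes $A'(2m+1)=3$ inside a single interval, whereas here you must show that the connected atom--coatom incidence graph --- which is $2$-regular, hence a cycle of length $t$ --- has $t=2$. The paper needs a separate argument: take an element $c$ of rank $3$ in $Q$; both $[\hat{0},c]$ and $[c,\hat{1}]$ are butterfly posets, so $c$ lies below two coatoms $a_k,a_l$ and above two atoms $x_h,x_s$, forcing $A_k=A_l=\{x_h,x_s\}$, i.e.\ a doubled edge, which is impossible in a cycle of length $t>2$. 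Without this (or an equivalent) step, your proof of the butterfly branch is also incomplete.
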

\begin{proof}
%There is a positive integer $k$, such that poset $P$ is $k$-summation of $B_n$ or $T_n$, so $P={\boxplus}^{k}{(B_n)}$ or $P={\boxplus}^{k}{(T_n)}$
%, where $B_n$ and $T_n$ are boolean lattice and butterfly poset of rank $n$, respectively.

%such that poset $P$ is obtained by identifying all minimal elements of $k$ copies of $B_n$(boolean lattice of rank $n$) and identifying all of their maximal elements.
%such that poset $P$ is obtained by identifying all minimal elements of $k$ copies of $T_n$(butterfly poset of rank $n$) and identifying all of their maximal elements.
Lemma~\ref{T4} implies that every interval of length $4$ is
isomorphic either to $B_4$ or
 $T_4$. Thus the factorial function $B(3)$ can only take the values $4$ or $6$. Therefore  we have the following two
 cases.
\

\begin{enumerate}
\item $B(3)=6$. In this case we claim that there is a positive
integer $k$ such that  $P={\boxplus^k}{(B_n)}.$ When we remove the
$\hat{1}$ and $\hat{0}$ from $P$, the remaining poset is a disjoint
union of connected components. Consider one of them and add minimal
element $\hat{0}$ and maximal element $\hat{1}$ to it. Denote the
resulting poset by $Q$. It is not hard to see that $Q$ is an
Eulerian binomial poset, and also the posets $P$ and $Q$ have the
same factorial functions and atom functions up to rank $2m$. Hence
$B_Q(k)=B_P(k)$ and $A_Q(k)=A_P(k)$, for $1\leq k\leq 2m$.
Eq.(\ref{q1}) implies that in the poset $Q$ the number of atoms and
number of coatoms are the same.  Denote this number by $t.$ Let
$x_1,\dots,x_t$ and $a_1, \dots, a_t$  be an ordering of the atoms
and coatoms of $Q$, respectively. Also, let $c_1,\dots,c_l$ be the
set of elements of rank $2m-1$ in $Q$. For each element $y$ of rank
at least $2$ in $Q$, let $S(y)$ be the set of atoms of $Q$ that are
below $y$. Set $A_i:=S(a_i)$ for each element $a_i$ of rank $2m$,
$1\leq i\leq t,$ and also set $C_i:=S(c_i)$ for each element $c_i$
of rank $2m-1$, $1\leq i\leq l.$ By considering factorial functions,
Theorem~\ref{even} implies that the intervals $[\hat{0}, a_i]$ and
$[x_j,\hat{1}]$  are isomorphic to $B_{2m}$, where $1\leq i\leq t$
and $1\leq j\leq t$. We conclude that any interval $[\hat{0},c_k]$
of rank $2m-1$ is isomorphic to $B_{2m-1}$. As a consequence,
 $|A_i|=|S(a_i)|=2m$, $1\leq i\leq t $ and also $|C_k|=|S(c_k)|=2m-1$, $1\leq k\leq l.$

In the case that there are  $i_1$ and $j_1$ such that  $A_{i_1}\cap
A_{j_1} \neq \phi,$ where $1\leq i_1,j_1\leq t$, we claim that
$2m-1\leq|A_{i_1}\cap A_{j_1}|\leq  2m$. Consider an atom $x_k \in
A_{i_1}\cap A_{j_1}$, $1\leq k\leq t$. Theorem~\ref{even} implies
that $[x_k,\hat{1}]= B_{2m}$. Thus, there is an element  $c_h$ of
rank $2m-2$ in this interval which is covered by $a_{i_1}$ and
$a_{j_1}$, $1\leq h\leq l$. Notice that $c_h$ is an element of rank
$2m-1$ in $Q$. Therefore, $|C_h|=2m-1\leq|A_{i_1}\cap A_{j_1}|\leq
|A_{i_1}|=|S(a_{i_1})|= 2m$.

We claim that for all distinct pairs $i$ and $j$, $1\leq i,j\leq t$,
we have $A_i\cap A_j \neq \emptyset.$ Associate the graph $G_Q$ to
the poset $Q$ as follows: $A_1,\dots, A_t$ are vertices of this
graph, and we connect vertices $A_i$ and $A_j$ if and only if
$A_i\cap A_j \neq \phi.$ Since  $Q-\{\hat{0},\hat{1}\}$ is
connected, we conclude that $G_Q$ is a connected graph. If
$\{A_{i_1},A _{j_1}\}$ and $\{A_{j_1},A _{k_1}\}$ are different
edges of  $G_Q$, we wish to show that $\{A_{i_1},A _{k_1}\}$ is also
an edge of $G_Q$. $|A_{i_1}\cap A _{j_1}|\geq 2m-1$ as well as
$|A_{j_1}\cap A_{k_1}|\geq 2m-1$. On other hand, since
$|A_{i_1}|=|A_{j_1}|=|A_{k_1}|=2m$, we conclude that $A_{i_1}\cap
A_{k_1} \neq \phi.$ Therefore $\{A_{i_1},A _{k_1}\}$ is also an edge
of $G_Q$. As a consequence, the connected graph $G_Q$ is a complete
graph. Thus for all different $i$ and $j$ $A_i\cap A_j \neq \phi$
and also $2m-1\leq|A_{i}\cap A_{j}|\leq 2m$, where  $1\leq i,j\leq
t$.

Now we show that $|A_i\cap A_j|=2m-1$ for different $i,j$. Suppose
this claim doesn't hold. Then there are different $i^{'},j^{'}$ such
that $|A_{i^{'}}\cap A_{j^{'}}|=2m.$  We claim that there are two
elements of rank $2m-1$ in $Q$ such that they both are covered by
coatoms $a_{i^{'}}$ and $a_{j^{'}}$. To prove this claim, consider
an atom  $x_f \in A_{i^{'}}\cap A_{j^{'}}$, so $[x_f,\hat{1}]=
B_{2m}$. Hence, there is a unique element $c_h$ of rank $2m-2$ in
this interval which is covered by both $a_{i^{'}}$ and $a_{j^{'}}$.
 By induction on $m$, Lemma~\ref{binomial3}, and the property that
$|C_h|\leq |A_{i^{'}}\cap A_{j^{'}}|=2m$ we conclude that
$[\hat{0},c_h]$ is isomorphic to $B_{2m-1}$ and so $|C_h|=2m-1$.
Therefore there is an atom  $x_d \in A_{i^{'}}\cap
A_{j^{'}}\setminus C_h.$ Since the interval $[x_d,\hat{1}]$ is
isomorphic to $B_{2m}$, there is an element $c_k\neq c_h$ of rank
$2m-1$ which is covered by coatoms $a_{i^{'}}$ and $a_{j^{'}}$ .

Since $|C_h|=|S(c_h)|=|C_k|=|S(c_k)|=2m-1$ and $C_k, C_h$ are both
subsets of $A_i\cap A_j$, we conclude that there should be an atom
$x_s \in C_k\cap C_h$. Therefore the interval $[x_s,\hat{1}]$ has
two elements  $c_k$ and $c_h$ of rank $2m-2$ such that they both are
covered by two elements  $a_i$ and $a_j$ of rank $2m-1$ in the
interval $[x_s, \hat{1}]$. We know $[x_s,\hat{1}]=B_{2m}$ and there
are no two elements of rank $2m-2$ covered by two elements of rank
$2m-1$ in $B_{2m}$. This contradicts our assumption, and so
$|A_i\cap A_j|=2m-1$ for all different $i,j$, as desired.

In summary:
\begin{enumerate}\label{states}

\item   $|A_i|=2m$  for $1\leq i\leq t$, \item  $|A_i\cap
A_j|=2m-1$ for all $1\leq i<j\leq t$, \item
$\bigcup_{i=1}^{t}A_i=\{x_1,\dots,x_t\}$.
\end{enumerate}
As a consequence, we have $t> 2m$.

Next, we are going to show that $t=2m+1.$ Without loss of
generality, consider the three different sets $A_{1}=S(a_{1}),
A_{2}=S(a_{2})$ and $A_{3}=S(a_{3})$ which are associated with the
three coatoms $a_{1}, a_{2}$ and $a_{3}.$  We know that
$|A_{1}|=|A_{2}|=|A_{3}|=2m$ and $|A_{1}\cap A_{2}|=|A_{2}\cap
A_{3}|=|A_{1}\cap A_{3}|=2m-1$. Without loss of generality, let us
that assume $A_{1}=\{x_1,x_2,\dots,x_{2m-1},y_1 \}$ and
$A_{2}=\{x_1,x_2,\dots,x_{2m-1}, y_2\}$ where $y_i\neq
x_1,\dots,x_{2m-1} $, $i=1,2$. We have two different cases: either
$A_3$ contains at least one of $y_1$ and $y_2$, or $A_3$ contains
neither of them.
%In the first case, $|A_{2}\cap A_{3}|=|A_{1}\cap
%A_{3}|=2m-1$ implies that $A_{3}$ contains all the elements
%$x_1,\dots,x_{2m-1}$ except one, say $x_j$.
 First we study the second case, $A_{3}=\{x_1,x_2,\dots,x_{2m-1},y_3\}$ where $y_3\neq y_1, y_2,
x_1,\dots, x_{2m-1}$. Considering the $t-3$ other coatoms $a_k$,
$4\leq k\leq t$, there are different atoms $y_k$, $4\leq k\leq t$,
such that $y_k\neq y_1, y_2, y_3, x_1, \dots, x_{2m-1}$ and
$A_k=S(a_k)=\{x_1,x_2,\dots,x_{2m-1}, y_k\}.$ This implies that the
number of atoms is $|\bigcup_{i=1}^{t}A_i|=t+2m-1$, which is a
contradiction.
 Hence only the first case can happen and $A_{3}$
should contain one of $y_1$ or $y_2.$ In this case $|A_{2}\cap
A_{3}|=|A_{1}\cap A_{3}|=2m-1$ implies that
$A_{3}=\{x_1,x_2,\dots,x_{2m-1},y_1,y_2\}\setminus\{x_j\}\subset
A_{1}\cup A_{2}$ for some $x_j$. Since $A_3$ was chosen arbitrarily,
it follows that for each $A_k$ we have $A_k\subset A_{1}\cup A_{2}$.
Hence
\begin{equation}
\bigcup_{i=1}^{t}A_k=\{x_1,\dots,x_{2m-1},y_1,y_2\}.
\end{equation}
Thus the number of coatoms in the poset $Q$ is $t=2m+1.$ By
Theorem~\ref{even},  $B_Q(k)=k!$, $1\leq k \leq 2m$, therefore
$B_Q(2m+1)=(2m+1)!$. By Proposition~\ref{Bn},  $Q$ is isomorphic to
$B_{2m+1}$ and so $P$ is a union of copies of $B_{2m+1}$ by
identifying their minimal elements and their maximal elements. In
other words, $P={\boxplus^k}{(B_{2m+1})}$. It can be seen that $P$
is binomial and Eulerian and the proof follows.

\

\item[$(ii)$] $B(3)=4$. With the same argument as part $(i)$, we
construct the binomial poset $Q$ by adding $\hat{1}$ and $\hat{0}$
to one of the connected components of  $P-\{\hat{0},\hat{1}\}$. We
claim that $Q$ is isomorphic to $T_{2m+1}$. Similar to part $(i)$,
let $a_1,\dots, a_t$ and $x_1,\dots,x_t$ denote  coatoms and atoms
of  $Q$. Set $A_i=S(a_i).$ By Theorem~\ref{even}, $|A_i|=2$. It is
easy to see that $\bigcup_{i=1}^{t}A_i=\{x_1,\dots,x_t\}$. Define
$G_Q$ to be the graph  with vertices $x_1,\dots,x_t$ and edges
$A_1,\dots,A_t.$ Since $Q\setminus \{\hat{0},\hat{1}\}$ is a
connected component, $G_Q$ is a connected graph. Since $[x_i,
\hat{1}]\cong T_{2m}$, the degree of each vertex of $G_Q$ is $2$ and
$G_Q$ is the cycle of length $t$. Therefore if $t>2$, $|A_i\cap
A_j|=1$ or $0$, $1\leq i<j\leq t$.

We claim $t=2.$ Suppose this claim does not hold, so $t>2$. Consider
an element $c$ of rank $3$ in $Q$, Lemma~\ref{binomial3} and
Theorem~\ref{even} imply that both intervals $[\hat{0},c]$ and
$[c,\hat{1}]$ are the butterfly posets. Hence there are two coatoms
above $c$, say $a_k$ and $a_l$, and similarly there are two atoms
below $c$, say $x_h$ and $x_s$. That is,  $A_k=A_l=\{x_h,x_k\}$. As
which is not possible when $t>2.$
 As a consequence, $t=2$ and all $A_i$'s have $2$ elements and $|\bigcup_{1}^{t}A_i|=|\{x_1,\dots,x_t\}|=2=t$.

% we see that each two coatoms $a_i$ and $a_j$, $1\leq i<j\leq t$ should connect to an atom in poset $Q$.
%$A_i\cap A_j\neq \phi$ and $|A_i\cap A_j|=1,2$ for all
%$1\leq i<j\leq t$. We wish to show $|A_i\cap A_j|=2$, for all $1\leq
%i<j \leq t$. We prove the above fact by using contradiction, assume that
%$|A_i\cap A_j|=1$ for some particular $i,j$ $1\leq
%i<j \leq t$, without lose of generality we can assume
%$A_i=\{x_1,x_3\}$ and $A_j=\{x_2,x_3\}$. So $t\geq 3$ and there is
%$l\leq t$ which is not equal to $i$ and $j$, we conclude that
%$A_l=\{x_1,x_2\}$, $A_l=\{x_1,x_2\}$ or $A_l=\{x_3,e\}$, where $e$ is an atom such that
%$e\neq x_1,x_2,x_3$.

%In case $A_l=\{x_1,x_2\}$, since we cannot find the forth set, say
%$A_r$, such that $A_r$, intersect all $A_i, A_j$, and $A_k$. This
%case is impossible.
     %In case $A_l=\{x_3,e\}$, we conclude that interval $[x_3,\hat{1}]$,
%contain three elements of rank $2m-1$. On the other hand applying
%Theorem~\ref{even}, interval $[x_3,\hat{1}]$ is isomorphic to
%$T_{2m}$, which has two elements of rank $2m-1$, we contradict the
%case $|A_i\cap A_j|=1$.

Similar to part $(i)$,  $B_Q(k)={2^{k-1}}$ for $1\leq k\leq 2m+1$.
 By Proposition~\ref{Tn}, we conclude that $Q$ is isomorphic to
$T_{2m+1}$. Therefore, there is an integer $k>0$  such that $P={
{\boxplus}^k}{(T_{n})}.$

\end{enumerate}
\end{proof}
\begin{figure}[htp1]
\begin{center}
\includegraphics[height=1.4in]{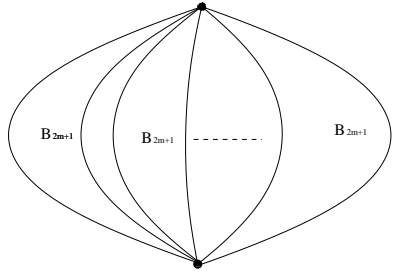}
\caption{A poset that is obtained by identifying all mimimal
elements and all maximal elements of copies of $B_{2m+1}$ }
\label{fig:2}
\end{center}
\end{figure}

\section{Finite Eulerian Sheffer posets} \label{shefferr}

For basic definitions regarding Sheffer posets, see
Section~\ref{def}.
 In this section, we give an almost complete classification of
the factorial functions and the structure of Eulerian Sheffer
posets.

First, we provide some examples of Eulerian Sheffer posets. We study
Eulerian Sheffer posets of small ranks $n=3,4$ in Lemma ~\ref{rank3}
and ~\ref{thm4}. By Lemma ~\ref{rank3} and ~\ref{thm4}, the only
possible values of $B(3)$ are $4$ and $6.$  In Section~\ref{b6},
Lemma~\ref{d} and
Theorems~\ref{evensheffer},~\ref{sheffer6},~\ref{2even},
~\ref{3even} deal with Eulerian Sheffer posets with $B(3)=6$.
Finally in Section~\ref{b4}, Theorems~\ref{4even},
~\ref{5even},~\ref{6evem} deal with Eulerian Sheffer posets with
$B(3)=4$ .

The results of this Section are summarized below.
\\

Let $P$ be a Eulerian Sheffer poset of rank $n$. Then $P$ satisfies
one of following conditions.
\begin{enumerate}

\item  $n=3$.  $P = \boxplus_{i=1\dots k} P_{q_{i}}$ for some
$q_1,\ldots ,q_r$ such that $q_i\geq 2.$ \item $n=4$. The complete
classification of factorial functions of the poset $P$ follows from
Lemma~\ref{thm4}. \item  $n$ is odd and $n\geq 4$. Then one of the
following is true:
\begin{enumerate}
\item  $B(3)=D(3)=6$. Then $P={{\boxplus^\alpha}{(B_{n})}}$ for
some $\alpha$. \item   $B(3)=6, D(3)=8.$ This case is open.
\item  $n=5$, $B(3)=6, D(3)=10.$ This case remains open.
\item   $B(3)=6, D(3)=4.$
 %poset $P$ is the
%$\alpha$-summation of the poset $\Sigma^{*}(B_{2m})$,
Then $P={\boxplus^\alpha}{(\Sigma^{*}(B_{n-1}))}$ for some $\alpha.$
\item $B(3)=4$. The classification follows from Theorems $3.11$
and $3.13$ in~\cite{MR1}.

\end{enumerate}
\item $n$ is even and $n\geq 6$. Then one of the following is
true:
\begin{enumerate}
\item $B(3)=D(3)=6.$ Then, $P=B_n$.

\item  $B(3)=6, D(3)=8$. The poset $P$ has the same factorial
function as the cubical lattice of rank $n$, that is,   $D(k)=
2^{k-1}(k-1)!$ and  $B(k)=k!.$

\item  $B(3)=6, D(3)=4.$ Then
$P=\Sigma^{*}({{\boxplus^\alpha}{(B_{n-1})}})$ for some $\alpha$.
%poset $P$ is isomorphic to %poset
 %$\Sigma^{*}({Q})$, dual suspension of poset $Q$, where poset $Q$ is $\alpha$-summation of  the boolean lattice of rank $2m+1$, $B_{2m+1}.$
% So

 % As a conclusion poset $P$ has following factorial functions:
 % $B(k)=k!$ for $1\leq k\leq 2m$, $B(2m+1)=\alpha(2m+1)!$, $D(1)=1$, $D(k)=2(k-1)!$ for $2\leq k \leq 2m+1$ and $D(2m+2)=2\alpha(2m+1)!$
\item $B(k)= 2^{k-1}$, for $1\leq k\leq 2m$, and $B(2m+1)=\alpha
\cdot2^{2m}$ for some $\alpha>1$.  In this case $P$ is isomorphic to
$\Sigma^{*}{{\boxplus^\alpha}{(T_{2m+1})}}.$ \item $B(k)= 2^{k-1}$,
$1\leq k\leq 2m+1.$ The classification follows from Theorems $3.11$
and $3.13$ in~\cite{MR1}

 \end{enumerate}

\end{enumerate}

 It is clear that every  binomial poset is also a  Sheffer poset.
Here are some other examples of  Sheffer posets.
\\

\begin{example}
Let  $P$ be a  binomial poset of rank $n$ with the factorial
functions $B(k)$. By adjoining a new minimal element $\widehat{-1}$
to $P$, we obtain a Sheffer poset of rank $n+1$ with binomial
factorial functions $B(k)$ for $1\leq k\leq n$ and Sheffer factorial
functions, $D(k)=B(k-1)$ for $1\leq k\leq n+1$.
\end{example}

\

\begin{example}
Let $T$ be the following three element poset:

\

\begin{center}
\psset{unit=.50cm}       %This tells how much one unit is
%\pspicture(0,-1)(5,2)    %This is the lower-left and upper-right corner of your diagram
\psdots*[dotscale=1](0.5,0)(1,1)(1.5,0)   % This command makes dots
\psline(0.5,0)(1,1)(1.5,0)%\endpspicture
\end{center}

  Let $T^{n}$  be the Cartesian product of $n$ copies of the poset $T$. The poset $C_n=T^{n}\cup \{\hat{0}\}$  is the face lattice of
an $n$-dimensional cube, also known as the \emph{cubical lattice}.
The cubical lattice is a  Sheffer poset with  $B(k)=k!$ for $1\leq
k\leq n$ and $D(k)=2^{k-1}(k-1)!$ for $1\leq k \leq n+1$.
\end{example}

\

Let $P$ be an Eulerian Sheffer poset of rank $n$. The
Euler-Poincar\'e relation for every $m$-Sheffer interval, $2\leq
m\leq n$, becomes
\begin{equation}\label{q5}
1+\sum_{k=1}^{m}{{(-1)}^{k}}\cdot\frac{D(m)}{D(k)B(m-k)}=0.
\end{equation}
It is clear that $B_2$ is the only
 Eulerian Sheffer poset of rank $2$.

In the next lemma, we characterize the structure of Eulerian Sheffer
posets of rank $3.$ The characterization of the factorial function
is an immediate consequence.

\

\begin{lemma}\label{rank3}Let $P$ be a Eulerian Sheffer poset of rank
$3$.
\begin{enumerate}
\item[$(i)$]The poset $P$ has the  factorial functions $D(2)=2$
and $D(3)=2q$, where $q$ is a positive integer such that $q\geq 2.$
\item[$(ii)$] There is a list of integers $q_1,\dots ,q_r$,
$q_i\geq 2$ such that $P = \boxplus_{i=1\dots k} P_{q_{i}}$, where
$P_{q_{i}}$ is the face lattice of a $q_{i}$-gon.
\end{enumerate}
\end{lemma}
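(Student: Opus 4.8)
The plan is to reduce the statement to a purely combinatorial description of the covering relation between atoms and coatoms, and then to recognize that description as a disjoint union of polygon boundaries.

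First I would extract the local numerics from the Euler-Poincar\'e relation. Applying Eq.(\ref{q5}) (equivalently, balancing even- against odd-rank elements) to a Sheffer $2$-interval $[\hat 0,y]$ with $\rho(y)=2$ forces exactly two atoms below $y$, so $D(2)=2$; the same relation applied to a binomial $2$-interval $[x,\hat 1]$ with $\rho(x)=1$ forces exactly two coatoms above $x$, so $B(2)=2$. Applying Euler-Poincar\'e to the whole poset $[\hat 0,\hat 1]$ gives (number of atoms) $=$ (number of coatoms); call this common value $q$. Counting the maximal chains $\hat 0<x<y<\hat 1$ by grouping them under their coatom yields $D(3)=2q$, and since each atom must lie below two distinct coatoms there are at least two coatoms, i.e.\ $q\ge 2$. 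This establishes part $(i)$.

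For part $(ii)$ I would encode $P$ by its atom--coatom incidence. Since $P$ has rank $3$ with unique $\hat 0$ and $\hat 1$, the poset is determined by which atoms lie below which coatoms: distinct atoms are incomparable, as are distinct coatoms, and every cover involving $\hat 0$ or $\hat 1$ is forced. Form the bipartite incidence graph $H$ whose two vertex classes are the atoms and the coatoms, with an edge for each covering pair $x\lessdot y$. By the previous paragraph every atom and every coatom has degree exactly $2$, so $H$ is $2$-regular and hence a disjoint union of cycles. These cycles have even length because $H$ is bipartite, and length at least $4$ because no atom can sit below a coatom in two different ways (so $H$ has no multiple edges). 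A cycle of length $2q_i$ alternates $q_i$ atoms with $q_i$ coatoms and is precisely the vertex--edge incidence graph of a $q_i$-gon, so the corresponding piece of $P$ is the face lattice $P_{q_i}$ (with $q_i=2$ giving the bigon $P_2=T_3$). Since the connected components of $H$ correspond to the connected components of $P\setminus\{\hat 0,\hat 1\}$, reattaching $\hat 0$ and $\hat 1$ shows $P=\boxplus_{i=1\dots k}P_{q_i}$ in the sense of Definition~\ref{sum}, with each $q_i\ge 2$ and $q=\sum_i q_i$.

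The routine parts are the chain counts and the graph-theoretic fact that a $2$-regular graph is a union of cycles. The step needing the most care is the \emph{reconstruction}: arguing rigorously that the abstract incidence data determines $P$ up to isomorphism, and that a single $2q_i$-cycle of $H$ is realized inside $P$ exactly as $P_{q_i}$ rather than as some other rank-$3$ poset with the same incidences. I would handle this by verifying that the cover relations $\hat 0\lessdot x$, $x\lessdot y$, $y\lessdot \hat 1$ read off a cycle coincide with those of $P_{q_i}$, and that the gluing at $\hat 0$ and $\hat 1$ is exactly the operation $\boxplus$.
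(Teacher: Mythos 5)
Your proposal is correct and follows essentially the same route as the paper: Euler--Poincar\'e on $2$-intervals forces $B(2)=D(2)=2$, so the bipartite Hasse diagram between atoms and coatoms is $2$-regular and decomposes into even cycles of lengths $2q_i$ with $q_i\ge 2$, each realized as the face lattice $P_{q_i}$, whence $P=\boxplus_{i=1\dots k}P_{q_i}$ and $D(3)=2(q_1+\dots+q_r)$. Your writeup is in fact slightly more careful than the paper's, since you make explicit both the chain count $D(3)=2q$ and the reconstruction of $P$ from the incidence graph, steps the paper passes over as evident.
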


\begin{proof}
Consider an Eulerian Sheffer poset $P$ of rank $3$. Now
$P-\{\hat{0},\hat{1}\}$ consists of elements of rank $1$ and rank
$2$ of $P$. By the Euler-Poincar\'e relation, it is easy to see that
$B(2)=2$ and every interval of length $2$ is isomorphic to $B_2$. So
in $P-\{\hat{0},\hat{1}\}$, every element of rank $2$ is connected
to two elements of rank $1$ and vice versa. Therefore, the Hasse
diagram of $P- \{{\hat{0},\hat{1}}\}$ is just the disjoint union of
the cycles of even lengths $2q_{1},\dots, 2q_{r}$ where $q_{i}\geq
2$. We conclude that $P$ is obtained by identifying all minimal
elements of the posets $P_{q_{1}},\dots, P_{q_{r}}$ and identifying
all of their maximal elements. Hence $P = \boxplus_{i=1\dots k}
P_{q_{i}}$ and $D(3)=2(q_1+\dots+ q_r)$. Thus every Eulerian Sheffer
poset of rank $3$ has the factorial functions $D(3)=2q$ where $q\geq
2$ and $B(2)=D(2)=2.$
\end{proof}

\begin{figure}[htp2]
\begin{center}
\includegraphics[height=1.1in]{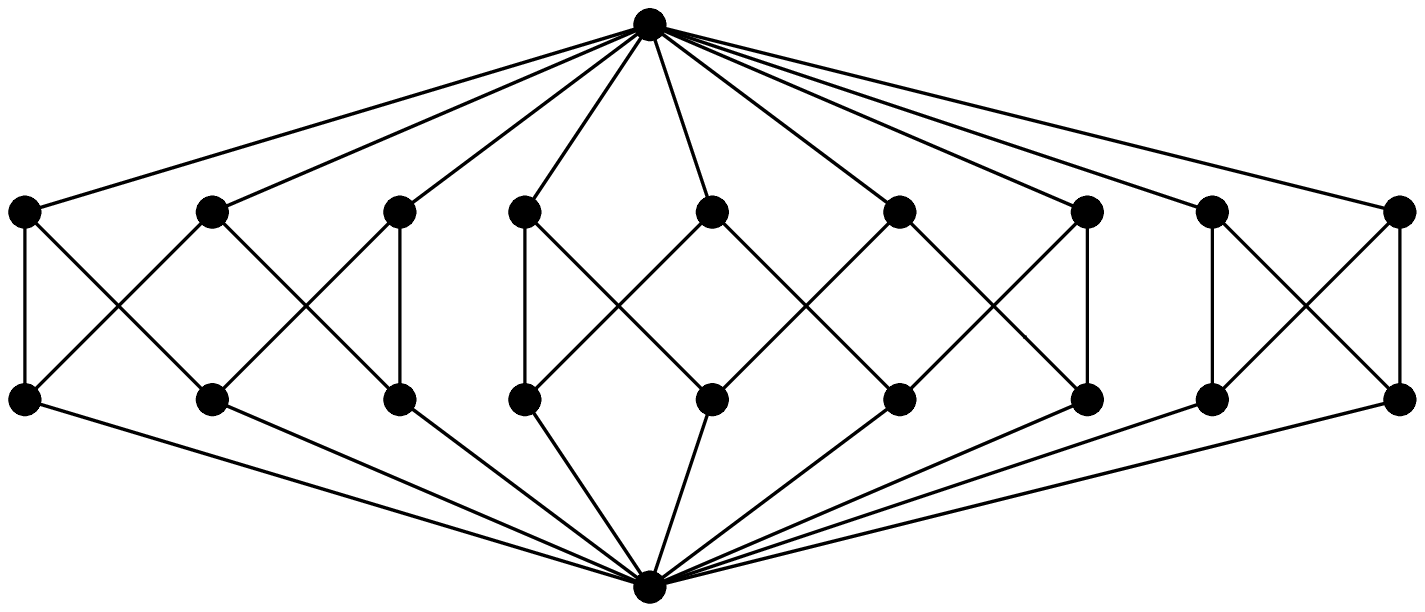}
\caption{$P = \boxplus_{k=2\dots 4}P_{k}$.}
 \label{fig:3}
\end{center}
\end{figure}

%For the definitions of the dual suspension and the dual of posets,
%see Section ~\ref{def}. If $P$ is an Eulerian binomial poset with
%factorial functions $B(n)$, then ${\Sigma^{*}(P)}$ is an Eulerian
%Sheffer poset with factorial functions $D_{\Sigma^{*}(P)}(n)=2
%B(n-1)$ and $B_{\Sigma^{*}(P)}(n)= B(n)$ for $n\geq 2$.

Lemma~\ref{thm4} deals with Eulerian Sheffer posets of rank $4.$

\

\begin{lemma}\label{thm4}
Let poset $P$ be an Eulerian Sheffer poset of rank $4.$ Then one of
the following conditions hold.

\begin{enumerate}
\item  $B(3)=2r$, $D(3)=4$, $D(4)=4r$, where $r\geq 2$.
\item  $B(3)=10$, $D(3)=3!$, $D(4)= 120$. \item  $B(3)=8$,
$D(3)=3!$, $D(4)=2^{3}.3!$. \item  $B(3)=3!$, $D(3)=3!$, $D(4)=4!.$
\item  $B(3)=4$, $D(3)=3!$, $D(4)=2\cdot3!$. \item
$B(3)=3!$, $D(3)=10$, $D(4)=120$. \item  $B(3)=3!$, $D(3)=8$,
$D(4)=2^{3}\cdot3!$. \item  $B(3)=3!$, $D(3)=4$, $D(4)=2\cdot3!$.
\item  $B(3)=4$, $D(3)=2r$, $D(4)=4r$ where $r\geq 2$.

\end{enumerate}
\end{lemma}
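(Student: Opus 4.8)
The plan is to reduce the whole classification to a single relation coming from the Euler--Poincar\'e condition on the full poset $P=[\hat 0,\hat 1]$, and then enumerate the solutions of an elementary Diophantine inequality. First I would record the forced small values. Since every $2$-interval of an Eulerian poset is $B_2$, we have $B(0)=B(1)=D(1)=1$ and $B(2)=D(2)=2$. Applying Lemma~\ref{binomial3} to a rank-$3$ binomial interval $[x,\hat 1]$ with $x$ an atom gives $B(3)=2q$ for some integer $q\ge 2$, and applying Lemma~\ref{rank3} to a rank-$3$ Sheffer interval $[\hat 0,y]$ with $\rho(y)=3$ gives $D(3)=2q'$ for some integer $q'\ge 2$. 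Since $P$ has rank $4$, these are the only binomial and Sheffer intervals of rank $3$, and there are no binomial intervals of rank $4$; hence $B(3)$, $D(3)$ and $D(4)$ are the only unknown factorial values.

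Next I would write the Euler--Poincar\'e relation \eqref{q5} for the unique rank-$4$ Sheffer interval $[\hat 0,\hat 1]$. Substituting $B(0)=B(1)=D(1)=1$ and $B(2)=D(2)=2$, the $m=4$ instance collapses to
\begin{equation}\label{eq:master}
D(4)\left(\frac{1}{B(3)}-\frac14+\frac{1}{D(3)}\right)=2 .
\end{equation}
Because $P$ is finite, $D(4)$ is a positive integer, so the parenthetical factor must be strictly positive; writing $B(3)=2q$ and $D(3)=2q'$ this is exactly
\[
\frac1q+\frac1{q'}>\frac12,\qquad q,q'\ge 2 .
\]

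The heart of the proof is then the enumeration of the integer solutions of this inequality. I would split it into the two boundary families $q=2$ (any $q'\ge 2$) and $q'=2$ (any $q\ge 2$), together with the case $q,q'\ge 3$, where the inequality forces $q,q'\le 5$ and, among those candidates, retains only $(q,q')\in\{(3,3),(3,4),(3,5),(4,3),(5,3)\}$. For each surviving pair I would solve \eqref{eq:master} for $D(4)$: the family $q=2$ yields $B(3)=4,\ D(3)=2q',\ D(4)=4q'$, which is case~(9) of the statement (and case~(5) when $q'=3$); the family $q'=2$ yields $D(3)=4,\ B(3)=2q,\ D(4)=4q$, which is case~(1) (and case~(8) when $q=3$); and the five sporadic pairs give precisely cases~(4),(7),(6),(3),(2), with $D(4)=24,48,120,48,120$ respectively.

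Finally I would verify that in every surviving case $D(4)$ and the three rank counts $D(4)/B(3)$, $D(4)/4$, $D(4)/D(3)$ coming from \eqref{q0} are positive integers, so that no admissible triple is lost to an integrality obstruction; this check is immediate in each case. I expect the only real care to lie in the Diophantine step --- applying the bound $q,q'\le 5$ symmetrically, and making sure the two boundary families are not double counted against the sporadic list --- rather than in any structural feature of the poset, since the complete list of factorial triples follows from \eqref{eq:master} together with the positivity of $D(4)$. Note that this argument establishes only the necessity asserted by the lemma (the possible values of the factorial functions); the overlaps between cases (5),(8) and the families (9),(1) are harmless, as the statement merely lists all occurring triples.
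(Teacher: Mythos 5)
Your proposal is correct and takes essentially the same route as the paper: the paper's counts $m$, $r$, $n$ of elements of ranks $1$, $2$, $3$, combined with $2+r=m+n$ and $4r=B(3)m=D(3)n$, are exactly your master equation $D(4)\left(\frac{1}{B(3)}-\frac{1}{4}+\frac{1}{D(3)}\right)=2$, and both arguments conclude by enumerating the integer solutions of $\frac{1}{q}+\frac{1}{q'}>\frac{1}{2}$. The only cosmetic difference is organizational: you import $q,q'\geq 2$ from Lemmas~\ref{binomial3} and~\ref{rank3} at the outset, whereas the paper only takes evenness of $B(3)$ and $D(3)$ from Lemma~\ref{rank3} and eliminates the $k_i=1$ cases by hand inside its case analysis.
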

\begin{proof}
Let $P$ be an Eulerian Sheffer poset of rank $4$. Note that for
every Eulerian Sheffer poset $B(1)=D(1)=1$ as well as $B(2)=D(2)=2.$
The variables $m$, $r$, $n$ denote the number of elements of rank
$1$, $2$ and $3$ of $P$, respectively. By the Euler-Poincar\'e
relation $2+r= m+n$. The number of maximal chains in  $P$ is given
by $4r=B(3)m=D(3)n.$ Lemma~\ref{rank3} implies that there are
positive integers $k_1, k_2$ such that $D(3)=2k_2$ and $B(3)=2k_1.$
Thus $r+2=(\frac{2}{k_1}+\frac{2}{k_2})r.$ We conclude that
$\frac{2}{k_1}+\frac{2}{k_2}> 1$; therefore the case $k_1,k_2 > 3$
cannot happen. Next we study the  remaining  cases as follows.
\begin{enumerate}

\item[$(1)$] $k_2=1$. Then $n=2r$ and $2r\leq r+2$. Therefore
$r=1, 2$, and we have the following cases:
\begin{enumerate}
\item $r=1$. Then $m=1$ and $n=2$, so the Sheffer interval of
length $2$ in $P$ does not satisfy the Euler-Poincar\'e relation.
This case is not possible. \item  $r=2$. Then $n=4$ and $m=0$, which
is not possible.
\end{enumerate}

\item[$(2)$] $k_2=2$. Then $2r=2n$, so $n=r$, $m=2$ and $k_1=r$.
The fact that every interval of rank $2$ is isomorphic to $B_2$
implies that $r\geq 2$. Thus $B(1)=1$, $B(2)=2$ and $B(3)=2r$, as
well as $D(1)=1$, $D(2)=2$, $D(3)=4$, and $D(4)=4r$. The poset
$T={\Sigma^{*}(P_r)}$, where  $P_r$ is the face lattice of
$r$-polygon, is an Eulerian Sheffer poset with the described
factorial functions.

\item[$(3)$]$k_2=3$. The equation
$r+2=m+n=(\frac{2}{3}+\frac{2}{k_1})r$ implies that $k_1< 6$, so we
need to consider the following cases.
\begin{enumerate}
\item  $k_1=5$. Then $r+2=\frac{2}{5}r+ \frac{2}{3}r$, so
$\frac{1}{15}r=2$, $r=30$, $n=20$ and $m=12$. Thus $P$ has the
following factorial functions
 $B(3)=10$,  $D(3)=3!$ and $D(4)= 120$. The  face lattice of icosahedron is an Eulerian Sheffer
poset with the same factorial functions.

\item  $k_1=4$. Similarly, $P$ has the same factorial functions
as the dual of the cubical lattice of rank $4$, $B(3)=8$, $D(3)=3!$
and $D(4)=2^{3}\cdot3!$.

\item  $k_1=3$. Similarly, $P$ has the  factorial functions
$B(3)=3!$, $D(3)=3!$ and $D(4)=4!$. And $P$ is isomorphic to $B_4.$

\item $k_1=2$. Similarly, $P$ has the factorial functions
$B(3)=4$, $D(3)=3!$ and $D(4)=2\cdot3!$. The suspension of poset
$B_3$, $\Sigma(B_3)$, is an Eulerian Sheffer poset with
 the same factorial functions.
\item $k_1=1$. Then $r+2=2r+\frac{2}{3}r$, which is not
possible.
\end{enumerate}
%\item[$(4)$] $k_2>3$
%in this case, since $r+2=(\frac{2}{k_1}+\frac{2}{k_2})r$, so we
%need to study the cases $k_1\leq 3$.
\item[$(4)$]$k_1=3$. Then $r+2=(\frac{2}{k_1}+\frac{2}{k_2})r$
implies that $k_2<6$, so we have the following cases.

\begin{enumerate}
\item  $k_2=5$. Then $r+2=\frac{2}{5}r+ \frac{2}{3}r$, so
$\frac{1}{15}r=2$. Therefore $r=30$, $m=20$ and $n=12$ and so $P$
has the same factorial functions face lattice of a dodecahedron,
$B(3)=3!$, $D(3)=10$ and $D(4)=120$.

\item  $k_2=4$. Similarly, $P$ has the same factorial functions
as the cubical lattice of rank $4$, $B(3)=3!$ and  $D(3)=8$,
$D(4)=2^{3}\cdot3!$.

\item  $k_2=3$. $P$ has the factorial functions $B(3)=3!$,
$D(3)=3!$ and $D(4)=4!$. So, $P=B_4$.

\item $k_2=2$. It is easy to see that $P$ has the  factorial
functions as ${\Sigma}^{*}(B_3)$, $B(3)=3!$, $D(3)=2\cdot2!$ and
$D(4)=2\cdot3!$.

\item  $k_2=1$. Then $r+2=2r+\frac{2}{3}r$, which is not
possible.
\end{enumerate}

\item[$(5)$] $k_1=2$. Then $2r=2m$, so  $m=r$ and $n=2$.
Therefore , $B(3)=4$,  $D(3)=2r$ and $D(4)=4r$ where $r\geq 2$.
$T={\Sigma(P_r)}$, the suspension of poset $P_{r}$, is an Eulerian
Sheffer poset with the described factorial functions.
\item[$(7)$]$k_1=1$. Then $n=2r$ where $2r\leq r+2,$ so $r= 1,
2.$
\begin{enumerate}
\item $r=1$. Then $n=1$ and $m=2$. The Sheffer interval of
length $2$ in this poset does not satisfy the Euler-Poincar\'e
relation, so this case is not possible. \item  $r=2$. Then $m=4$ and
$n=0$, this case is not possible.

\end{enumerate}

\end{enumerate}

\end{proof}
\subsection{\bf{Characterization of the factorial functions and structure of Eulerian Sheffer posets of rank $n\geq 5$ for which $B(3)=3!.$}}\label{b6}

\
 In this section we consider Eulerian Sheffer posets of rank
$n\geq 5$ with  $B(3)=3!$. Lemma~\ref{d} shows that for any such
poset of rank $n\geq 5$, $D(3)$ can only take the values $4, 6, 8.$
In Subsections~\ref{d8},~\ref{d6},~\ref{d4}, we consider the three
different cases $D(3)=4, 6, 8$, respectively.

\

\begin{lemma}\label{d}
Let $P$ be a Eulerian Sheffer poset of rank $n\geq 6$ with
 $B(3)=3!$. Then  $D(3)$
can  take only the values $4, 6, 8.$
\end{lemma}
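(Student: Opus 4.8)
The plan is to exploit the Euler--Poincar\'e relation~(\ref{q5}) together with the already-established fact (from Lemma~\ref{thm4}) that every $4$-interval of $P$ forces $B(3)=3!$, which in turn pins down the binomial factorial function on low ranks. Since $B(3)=3!$ and every $4$-interval is Eulerian binomial, Lemma~\ref{T4} applied to the binomial intervals $[x,y]$ with $x\neq\hat 0$ shows that each such interval is isomorphic to $B_4$, so in fact $B(k)=k!$ for $1\leq k\leq n$ on all binomial intervals. Thus the binomial data is completely determined, and the only freedom left is in the Sheffer factorial function $D$, equivalently in the coatom-type data $C(k)=D(k)/D(k-1)$ near the bottom. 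The goal $D(3)\in\{4,6,8\}$ is then a statement purely about $D(3)$, so the strategy is to write down the constraint that a $3$-Sheffer interval be Eulerian and combine it with the constraint coming from a $4$-Sheffer interval sitting above it.

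\medskip

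\noindent First I would record that $D(2)=2$ and $B(2)=2$ for any Eulerian Sheffer poset, and that by Lemma~\ref{rank3} the rank-$3$ Sheffer interval has $D(3)=2q$ for some integer $q\geq 2$; so the real content is to bound $q$. To do this I would look one level up, at a $4$-Sheffer interval $[\hat 0,z]$ with $\rho(z)=4$, and count its elements by rank using~(\ref{q0}). Writing $m,r,n$ for the numbers of elements of rank $1,2,3$ respectively, formula~(\ref{q0}) with $B(k)=k!$ gives $r = D(4)/(D(2)B(2)) = D(4)/4$, $m=D(4)/D(1)B(3)=D(4)/6$, and $n=D(4)/D(3)$. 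The Euler--Poincar\'e relation~(\ref{q5}) on this interval reads $2+r=m+n$, i.e. $2 + D(4)/4 = D(4)/6 + D(4)/D(3)$. Since $D(3)=2q$ this becomes a single linear relation among $D(4)$ and $q$, from which $D(4)$ is expressed as a function of $q$, and positivity/integrality of all the counts $m,r,n$ forces the admissible values of $q$.

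\medskip

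\noindent The key step is then to extract the inequality that kills large $q$. Rearranging $2+D(4)/4 = D(4)/6 + D(4)/(2q)$ and solving for $D(4)$ gives a denominator of the form $\tfrac14-\tfrac16-\tfrac1{2q} = \tfrac1{12}-\tfrac1{2q}$, and since $D(4)>0$ we need this quantity to be positive, forcing $2q<12$, i.e. $q\leq 5$, hence $D(3)=2q\in\{4,6,8,10\}$. So the bound $q\le 5$ is immediate; the remaining work is to exclude the stray value $D(3)=10$ (the case $q=5$) when $n\geq 6$. I expect this exclusion to be the main obstacle, and I would handle it exactly as the rank-$4$ analysis in Lemma~\ref{thm4} does --- the value $q=5$ corresponds there to an icosahedron-type $4$-interval, whose factorial function only closes up consistently in rank $4$; for $n\geq 6$ one propagates the count to the rank-$5$ Sheffer interval using~(\ref{q0}) and~(\ref{q5}) again and checks that the resulting element counts $D(5)/(D(k)B(5-k))$ cannot all be positive integers, yielding a contradiction. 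This leaves precisely $D(3)\in\{4,6,8\}$, as claimed. The delicate point throughout is that~(\ref{q0}) is only valid because $B$ is already forced to equal $k!$; I would make that reduction explicit at the outset so that all the rank-counting formulas apply.
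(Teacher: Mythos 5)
Your first half is fine: using the Euler--Poincar\'e relation on a $4$-Sheffer interval with $B(2)=2$, $B(3)=6$ to force $D(3)=2q$ with $q\le 5$ is exactly the content of Lemma~\ref{thm4}, which the paper simply cites at this point. (One caveat: your opening claim that $B(k)=k!$ for all $1\le k\le n$ does not follow from Lemma~\ref{T4}; for binomial intervals of odd length Theorem~\ref{odd} only gives $B(2m+1)=\alpha\,(2m+1)!$ with $\alpha\ge 1$ possibly bigger than $1$, and this $\alpha$ turns out to be essential in the real argument.)

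The genuine gap is your proposed exclusion of $D(3)=10$: propagating to the rank-$5$ Sheffer interval cannot work. With $B(3)=6$, $B(4)=24$, $D(3)=10$, $D(4)=120$, relation~(\ref{q5}) at $m=5$ reads
\begin{equation*}
1-\frac{D(5)}{24}+\frac{D(5)}{12}-\frac{D(5)}{20}+\frac{D(5)}{120}-1=0,
\end{equation*}
and the coefficient $-\frac{1}{24}+\frac{1}{12}-\frac{1}{20}+\frac{1}{120}=0$, so the relation is vacuous and puts no constraint on $D(5)$. This is no accident: by Lemma~\ref{gon}, the Euler--Poincar\'e relation on an odd-rank interval is automatically implied by the Eulerianness of its proper intervals. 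Integrality does not bite either: taking $D(5)=120t$ makes the rank counts $5t,10t,6t,t$ all positive integers. Indeed the paper explicitly lists the case $n=5$, $B(3)=6$, $D(3)=10$ as \emph{open}, so no rank-$5$ numerical contradiction can exist. The paper's actual proof jumps to a rank-$6$ Sheffer interval: Theorems~\ref{even} and~\ref{odd} pin down $B(4)=4!$ and $B(5)=\alpha\cdot 5!$; writing $B=C(5)$ and $A=C(6)$, the rank-$6$ Euler--Poincar\'e relation reduces to $\alpha(A-2)=(\alpha-1)AB$. Then $\alpha=1$ forces $A=2$, contradicting $A=C(6)\ge A(5)=5$ (each coatom set of a binomial $5$-interval above an atom gives coatoms of the Sheffer interval), while $\alpha>1$ forces $B=\frac{\alpha}{\alpha-1}\cdot\frac{A}{A-2}\le\frac{10}{3}<4$, contradicting $B=C(5)\ge A(4)=4$. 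So the two ingredients missing from your sketch are precisely the passage to an even (rank-$6$) interval, where the relation is not degenerate, and the coatom-versus-atom inequalities $C(k)\ge A(k-1)$; counting and integrality alone cannot close the argument.
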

\begin{proof}
By Lemma~\ref{thm4}, the Sheffer factorial function of poset $P$ for
Sheffer $3$-intervals can take the following values $D(3)=4,6,8,
10$. We claim that the case $D(3)=10$ is not possible. Suppose there
is an Eulerian Sheffer poset $P$ of rank of at least $6$ with the
factorial functions $D(3)=10$ and $B(3)=3!$. By Lemma~\ref{thm4},
$P$ has the following factorial functions $D(1)=1$, $D(2)=2$,
$D(3)=10$, $D(4)=120$, $B(1)=1$, $B(2)=2!$ and $B(3)=3!$. Set
$C(6)=A$, $C(5)=B$, where $C(5)$ and $C(6)$ are coatom functions of
$P$. By Theorems~\ref{even} and~\ref{odd}, we conclude there is
$\alpha>0$ such that $B(4)=4!$ and $B(5)=\alpha.5!$. The
Euler-Poincar\'e relation implies that
$$1+\sum_{k=1}^{6}{(-1)}^{k}\cdot{\frac{D(6)}{D(k)B(6-k)}}=0,$$
therefore, by substituting the values in above equation, we have:
\begin{equation}\label{k}
2=\frac{AB}{\alpha}-AB+A,
 {\alpha}{(A-2)}={(\alpha-1)AB}.
\end{equation}
 we have the two following cases:
\begin{enumerate}
\item  $\alpha =1$. Eq.(\ref{k}) implies that $A=2$. However,
$A\geq A(5)=5$ where $A(5)$ is an atom function of $B_5$. This case
is not possible. \item  $\alpha > 1$. By Eq.(\ref{k}),
$${\left(\frac{\alpha}{\alpha-1}\right)}{\left(\frac{A}{A-2}\right)}=B.$$
$A\geq A(5)=5$ implies that $B<4$. On the other hand, since $B\geq
A(4)\geq 4$. This case is also not possible.
\end{enumerate}
We conclude that there is no Eulerian Sheffer poset of rank at least
$6$ with  $D(3)=10$ and $B(3)=3!$, as desired.
\end{proof}
\subsubsection{ \bf{Characterization of the factorial function of Eulerian Sheffer
posets of rank $n\geq 5$ for which $B(3)=3!$ and $D(3)=8.$}}
\label{d8}

\

In this subsection, we study the factorial functions of Eulerian
Sheffer posets of rank $n\geq 5$ for which $B(3)=3!$ and $D(3)=8.$
Theorem~\ref{evensheffer} characterizes the factorial functions of
such posets of even rank. However, the question of characterizing
factorial functions of Eulerian Sheffer posets of odd rank
$n=2m+1\geq 5$ with $B(3)=3!$ and $D(3)=8$ still remains open.

\
\begin{theorem}\label{evensheffer}
Let $P$ be an Eulerian Sheffer poset of even rank $n=2m+2\geq 6$
with  $B(3)=3!$ and $D(3)=8$. Then $P$ has the same factorial
functions as the cubical lattice of rank $n$, $C_n$. That is, $D(k)=
2^{k-1}(k-1)!,$ $1\leq k \leq n$ and $B(k)=k!$, $1\leq k \leq
{n-1}$.
 \end{theorem}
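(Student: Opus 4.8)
I would argue by induction on $m$, the base case $n=6$ being handled by the same analysis as the inductive step together with Lemma~\ref{thm4}. Throughout I use that every binomial interval $[x,y]$ with $x\neq\hat0$ is itself an Eulerian binomial poset, and that every interval $[\hat0,y]$ is an Eulerian Sheffer poset carrying the same $B(3)=6$ and $D(3)=8$.

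First I would pin down all the factorial functions below the top. A binomial interval $[x,\hat1]$ with $\rho(x)=2$ has even rank $2m$ and $B(3)=6$, so Theorem~\ref{even} gives $[x,\hat1]\cong B_{2m}$ and hence $B(k)=k!$ for $1\le k\le 2m$. Applying the induction hypothesis to any Sheffer interval $[\hat0,w]$ with $\rho(w)=2m$ (an Eulerian Sheffer poset of rank $2m$ with $B(3)=6$, $D(3)=8$) yields $D(k)=2^{k-1}(k-1)!$ for $1\le k\le 2m$; the base of this induction is exactly the line $B(3)=3!,\,D(3)=8$ of Lemma~\ref{thm4}, which forces $D(4)=2^{3}\cdot 3!$.

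Next I would treat the three remaining values $B(2m+1)$, $D(2m+1)$, $D(2m+2)$. The binomial interval $[x,\hat1]$ above an \emph{atom} $x$ has odd rank $2m+1$ and $B(3)=6$, so Theorem~\ref{odd} gives $[x,\hat1]\cong\boxplus^{\alpha}(B_{2m+1})$ and thus $B(2m+1)=\alpha\cdot(2m+1)!$ for some integer $\alpha\ge1$. Once $B(2m+1)$ and $D(2m+1)$ are known, $D(2m+2)$ is forced by the Euler--Poincar\'e relation~(\ref{q5}) for the full $(2m+2)$-interval: since $2m+2$ is even, isolating the $k=2m+2$ term (which equals $1$) together with the leading constant $1$ and solving linearly gives
\begin{equation}
D(2m+2)=\frac{-2}{\displaystyle\sum_{k=1}^{2m+1}\frac{(-1)^{k}}{D(k)\,B(2m+2-k)}},
\end{equation}
in which every quantity on the right is already determined except through $B(2m+1)=\alpha(2m+1)!$ (the $k=1$ term) and $D(2m+1)$ (the $k=2m+1$ term).

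The heart of the matter, and the step I expect to be the main obstacle, is determining the two odd-rank quantities $\alpha$ and $D(2m+1)$. Here Euler--Poincar\'e is useless: for the odd rank $2m+1$ the relation~(\ref{q5}) has vanishing constant term $1+(-1)^{2m+1}=0$, so it degenerates into a consistency identity among the already-known $D(1),\dots,D(2m)$ and $B(1),\dots,B(2m)$ and contains no information about $D(2m+1)$ or $\alpha$; likewise the natural double counts of incident $(\text{atom},\text{coatom})$ or $(\text{atom},\text{rank-}2)$ pairs turn out to be automatically satisfied and so fail to constrain $\alpha$. I would therefore establish $\alpha=1$ and the coatom value $C(2m+1)=D(2m+1)/D(2m)=2\cdot 2m$ by a direct argument on cover relations, in the spirit of the ruling-out of $A^{\prime}(2m+1)=3$ in Theorem~\ref{even} and the connectivity/complete-graph argument in Theorem~\ref{odd}: using that every rank-$2$ interval $[\hat0,z]$ is $B_2$ (so each such $z$ covers exactly two atoms), that $\boxplus^{\alpha}(B_{2m+1})$ splits the elements above an atom into $\alpha$ boolean copies, and that the rank-$2m$ Sheffer intervals are cubical by induction, one shows that a second copy ($\alpha\ge2$), or a coatom count other than the cubical one, produces an atom or a rank-$2m$ element with the wrong number of covers. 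With $\alpha=1$ (so $B(2m+1)=(2m+1)!$, matching Proposition~\ref{Bn}) and $D(2m+1)=2^{2m}(2m)!$ in hand, the displayed relation returns $D(2m+2)=2^{2m+1}(2m+1)!$, completing the match with the cubical lattice $C_n$.
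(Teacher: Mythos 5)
Your scaffolding matches the paper's: induction on $m$, Theorem~\ref{even} to get $B(k)=k!$ for $k\le 2m$, Theorem~\ref{odd} to get $B(2m+1)=\alpha(2m+1)!$, the induction hypothesis (with Lemma~\ref{thm4} at the base) for $D(k)$, $k\le 2m$, and the even-rank Euler--Poincar\'e relation to close the loop at the top. But there is a genuine gap where you say the heart of the matter lies, and also a misdiagnosis just before it. First, the misdiagnosis: Euler--Poincar\'e is not useless for $\alpha$. You only examined the degenerate odd-rank relation; the paper applies the relation to the full $(2m+2)$-interval with $A=C(2m+2)$ and $B=C(2m+1)$ as unknowns and obtains $A\bigl(1-B\bigl(\frac{2\alpha m+(\alpha-1)2^{2m}}{4\alpha m(2m+1)}\bigr)\bigr)=2$. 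Since $B\ge A(2m)=2m$, any $\alpha\ge 2$ makes the left side negative, so $\alpha=1$ falls out of this single equation with no cover-relation argument at all. With $\alpha=1$ the same equation becomes $2-A+\frac{AB}{4m+2}=0$, and the bound $4m-2\le B\le 4m+1$ (via Lemma~\ref{MRR} and $C(2m+2)\ge A(2m+1)=2m+1$) leaves exactly four integer pairs $(B,A)$.

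Second, the actual hard content of the theorem is eliminating the three non-cubical pairs $B=4m-2$, $4m-1$, $4m+1$, and here your proposal only gestures: ``one shows that \dots\ produces an atom or a rank-$2m$ element with the wrong number of covers.'' That mechanism does not work as stated, because each of these cases is locally consistent --- every rank-$2$ interval is $B_2$, every upper interval above an atom is boolean, and no single element has a visibly wrong cover count. The paper needs three separate lemmas (Lemma~\ref{evens} with Corollary~\ref{sheffer1}, Lemma~\ref{2m+1}, Lemma~\ref{sheffer}), each a \emph{global} double-counting argument: one compares the number of pairs of coatoms sharing a common rank-$(2m)$ (or rank-$(2m-1)$) element with the number of such elements, using the atom-sets $S(y)$ with $|S(y)|=2^{\rho(y)-1}$ and the fact that in $B_{2m}$ (respectively $B_3$) no two elements of a given rank are covered by the same two elements of the next rank, to reach a numerical contradiction such as $(4m+1)(2m-1)\ge(4m+1)(2m)$. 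Nothing in your sketch produces these counts, and without them the four-case analysis does not close, so the proposal as written does not prove the theorem.
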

\

In order to prove Theorem~\ref{evensheffer}, we prove the following
three Lemmas~\ref{evens},~\ref{2m+1} and~\ref{sheffer}:

\

\begin{lemma}\label{evens}
Let $Q$ be an Eulerian Sheffer poset of odd rank $2m+1$, $m \geq 2$,
with  $B(3)=3!$. Then  $Q$ cannot have the following sequence of
coatom functions: $C(n)=2(n-1)$ for $2\leq n\leq 2m$ and
$C(2m+1)=4m+1.$
\end{lemma}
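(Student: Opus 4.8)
The plan is to convert the coatom hypothesis into explicit factorial functions, observe that no Euler--Poincar\'e relation can detect the top coatom number, and then force a contradiction from the local structure at the top of $Q$. From $C(1)=1$ and $C(n)=2(n-1)$ for $2\le n\le 2m$ the product formula gives $D(k)=2^{k-1}(k-1)!$ for $1\le k\le 2m$, and $C(2m+1)=4m+1$ gives $D(2m+1)=(4m+1)\,2^{2m-1}(2m-1)!$. Since $B(3)=3!\neq 4$, every binomial interval of even rank $\le 2m$ is a boolean lattice by Theorem~\ref{even}, so $B(k)=k!$ for $1\le k\le 2m$; in particular $[x,\hat{1}]\cong B_{2m}$ for every atom $x$. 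Because $Q$ has odd rank, Lemma~\ref{gon} makes the Euler--Poincar\'e relation of the whole poset automatic, and indeed substituting the values above the factor $4m+1$ cancels and the residual alternating sum $\sum_{k=1}^{2m}(-1)^{k}/(2^{k-1}(k-1)!(2m+1-k)!)$ vanishes identically. Hence $C(2m+1)$ is invisible to every Euler--Poincar\'e relation, and the only numerical obstruction is integrality of the rank sizes: the number of atoms is $D(2m+1)/B(2m)=(4m+1)2^{2m-2}/m$, which is non-integral unless $m$ is a power of $2$. This disposes of most $m$ but leaves cases such as $m=2$, so the remaining argument must be structural.

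The structural input is obtained by dualizing. Every $z\neq\hat{0}$ lies above an atom $x$, and then $[z,\hat{1}]$ is an interval of $[x,\hat{1}]\cong B_{2m}$, hence boolean; so all upper intervals of $Q$ are boolean and $Q^{*}$ is the face poset of a simplicial (Eulerian) complex, in fact a pseudomanifold, since each rank-$2$ element $b$ satisfies $[\hat{0},b]\cong B_{2}$, i.e.\ every ridge of $Q^{*}$ lies in exactly two facets. Its vertices are the $4m+1$ coatoms of $Q$, and because $Q^{*}$ is simplicial two coatoms share at most one element of rank $2m-1$; therefore each coatom $a$ has exactly $C(2m)=4m-2$ neighbours and exactly $(4m+1)-1-(4m-2)=2$ \emph{opposite} coatoms, i.e.\ coatoms sharing no element of rank $2m-1$ with $a$. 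For the cubical lattice the analogous count is $1$ (each facet has a unique opposite), so the whole lemma comes down to proving that here, too, each coatom must have a unique opposite.

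To this end I would identify the vertex links. Fixing a coatom $a$, the vertices of $\mathrm{link}(a^{*})$ are the coatoms of $[\hat{0},a]$, two of them being adjacent exactly when they cover a common element. A coatom $c$ of $[\hat{0},a]$ has $C(2m-1)=4m-4$ lower covers, each lying under exactly two coatoms of $[\hat{0},a]$ (the relevant upper interval is $B_{2}$), and these partners are distinct because $[\hat{0},a]^{*}$ is simplicial as well (each $[w,a]$ sits inside $[x,a]\cong B_{2m-1}$ for an atom $x<w$). Thus $c$ is adjacent to $4m-4$ of the other $4m-3$ link vertices and has a unique non-neighbour; that is, the link of every vertex is a cross-polytope, at least at the level of its $1$-skeleton and, I expect, inductively on the rank as a full complex.

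Finally I would invoke a local-to-global rigidity: a connected simplicial pseudomanifold all of whose vertex links are cross-polytopes is itself a cross-polytope, and a cross-polytope has an even number of vertices, each with a unique opposite. This contradicts the second paragraph, where $Q^{*}$ has the odd number $4m+1$ of vertices and each has two opposites, and the lemma follows. The main obstacle is precisely this last step: upgrading the $1$-skeleton computation to the global statement that $Q^{*}$ is a cross-polytope. I expect this to require an induction on the rank that propagates the cross-polytope structure of the links of links, together with the verification that $Q^{*}$ is a flag complex, so that the non-faces inside a vertex neighbourhood coincide with the global non-edges and the ``two opposites'' count is genuinely contradicted.
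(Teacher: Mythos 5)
Your reduction to the structural case is sound as far as it goes: $D(k)=2^{k-1}(k-1)!$ for $k\le 2m$, $B(k)=k!$ via Theorem~\ref{even}, the observation that the odd-rank Euler--Poincar\'e relation cannot see $C(2m+1)$, and the integrality remark (the atom count $t=(4m+1)2^{2m-2}/m$ forces $m$ to be a power of $2$) are all correct --- the last point is a nice touch absent from the paper. But it leaves infinitely many $m$, and your structural argument for those has two genuine gaps. First, the claim that $Q^{*}$ is a \emph{simplicial complex}, from which you deduce that two coatoms share at most one rank-$(2m-1)$ element and hence that each coatom has exactly two ``opposites'': boolean upper intervals only make $Q^{*}$ a simplicial \emph{poset}, and simplicial posets can carry two faces on the same vertex set (the butterfly posets are precisely this phenomenon, and they satisfy all your local hypotheses of boolean upper intervals). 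The natural way to exclude two ridges $e\neq e'$ lying under the same pair of coatoms $a_i,a_j$ --- find a common atom $x$ below both and contradict uniqueness of meets in $[x,\hat{1}]\cong B_{2m}$ --- fails here at exact equality: $|S(e)|=|S(e')|=2^{2m-2}$ while the Sheffer interval $[\hat{0},a_i]$ has exactly $2^{2m-1}$ atoms, so $S(e)$ and $S(e')$ may partition them with no common atom. Everything downstream (degree $4m-2$, the two-opposites count, cross-polytope links) rests on this unproved injectivity. Second, the final step --- a connected simplicial pseudomanifold all of whose vertex links are cross-polytopes is a cross-polytope --- is exactly the crux, and you offer only an expectation; even the upgrade from the $1$-skeleton of a link to the full link is missing. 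A lemma cannot end on ``I expect this to require an induction.''

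The paper's proof avoids both difficulties by counting in the opposite direction, never needing the at-most-one-shared-ridge claim or any global topology. It first shows that \emph{every} two coatoms have a common atom: if $A_s\cap A_l=\emptyset$ for $A_i=S(a_i)$, pick $A_k$ meeting the complement of $A_s\cup A_l$ (possible since $|A_s|+|A_l|=2^{2m}<t$); over any common atom the interval to $\hat{1}$ is $B_{2m}$ by Proposition~\ref{Bn}, so $|A_i\cap A_j|=2^{\mathrm{rank}(a_i\wedge a_j)-1}$, forcing $|A_k\cap A_s|=|A_k\cap A_l|=2^{2m-2}$ and hence $A_k\subseteq A_s\cup A_l$, a contradiction. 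Then every pair of coatoms, meeting over some atom $x$ with $[x,\hat{1}]\cong B_{2m}$, covers a common element of rank $2m-1$; since $[e,\hat{1}]\cong B_2$, each such $e$ determines exactly \emph{one} pair, so the map from rank-$(2m-1)$ elements to pairs of coatoms is well defined and surjective, giving $(4m+1)(2m-1)\geq\binom{4m+1}{2}=(4m+1)\cdot 2m$, which is absurd. If you want to salvage your dual-geometric picture, you would at minimum have to close the two gaps above; the paper's elementary double count shows they can be bypassed entirely.
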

\begin{proof}
We proceed by contradiction. Assume $Q$ is such a poset.
Theorem~\ref{even} implies that $P$ has the binomial factorial
functions $B(k)=k!$ for $1\leq k\leq 2m.$ By Eq.(\ref{q0}) we
enumerate the number elements of ranks $1$, $2m-1$, $2m$ in this
Sheffer poset. Let $\{a_1,\dots,a_{4m+1}\}$, $\{e_1,\dots
,e_{(4m+1)(2m-1)}\}$ and $\{x_1,\dots, x_t\}$ denote the sets of
elements of rank $2m$, $2m-1$ and $1$ in $Q$, respectively, where
$t={\frac{4m+1}{2m}\cdot 2^{2m-1}}$. For each element $y$ of rank at
least $2$, let $S(y)$ be the set of atoms in $[\hat{0},y].$ Set
$A_{j}=S(a_{j})$ for each element $a_j$ of rank $2m$ and also
$E_{j}=S(e_{j})$ for each element $e_j$ of rank $2m-1$.
Eq.(\ref{q0}) implies that $|S(y)|=2^{r-1}$ for any element $y$ of
rank $2\leq r\leq 2m$.

We claim that for all different  $1\leq i,j \leq {4m+1}$, $A_i \cap
A_j\neq \phi$. Suppose this claim does not hold, then there exist
two different $s,l$ such that $|A_{s}\cap A_{l}|=0$ where  $1\leq s,
l \leq {4m+1}$. Since $|A_{s}|+|A_l|< t$, there is a set
$A_k=S(a_k)$ such that
 $A_k\cap({\{x_1,\dots,x_t\}-{A_{s} \cup A_{l}}})\neq \phi$, $1\leq k\leq
4m+1.$
 Generally speaking, $A_i \cap A_j$ is the set of atoms which are below $a_i\wedge a_j$.
Thus,
\begin{equation}\label{c1}
|A_i \cap A_j|=|S(a_i\wedge a_j)|=2^{rank(a_i\wedge a_j)-1}.
\end{equation}
%Since  ${A_{s}\cup A_{l}\cup A_k}\subseteq \{x_1,\dots,x_t\}$,
%It follows from the inclusion-exclusion principle and the fact
%$A_s\cap A_l= \emptyset$, that
%\begin{equation}\label{c3} |A_{s}\cup A_{l}\cup A_k|
%= |A_s|+|A_{l}|+|A_{k}|-|A_{s}\cap A_k|-|A_{l}\cap A_k|.
%\end{equation}
 Let us recall the following facts:
\begin{enumerate}
\item $A_k \cap ({\{x_1,\dots,x_t\}-{A_s \cup A_l}})\neq \phi$
\item $|A_i \cap A_j|=|S(a_i\wedge a_j)|=2^{rank(a_i\wedge
a_j)-1}$ for all different $i, j$, $1\leq i, j\leq 4m+1.$
\end{enumerate}
The above equations yield $|A_l\cap A_k|, |A_s\cap A_k|\leq
2^{2m-2}.$ Furthermore, since
$|\{x_1,\dots,x_t\}|=t={\frac{4m+1}{2m}\cdot 2^{2m-1}}$,
$|A_l|=|A_s|=|A_k|=2^{2m-1}$ and $|A_l\cap A_s|=0$, we conclude that

\begin{equation}\label{q6} |A_k \cap ({\{x_1,\dots,x_t\}-{A_{s}\cup
A_{l}}})|\leq |\{x_1,\dots,x_t\}-{A_{s} \cup A_{l}}|=
{\frac{2^{2m-1}}{2m}}.
\end{equation}
Since $|A_l\cap A_k|, |A_s\cap A_k|\leq 2^{2m-2},$ Eq.(\ref{q6})
implies that $|A_{l}\cap A_k|,|A_s\cap A_k|\neq 0.$ Consider an
arbitrary atom $x_1 \in A_l\cap A_k$. Clearly $a_l$, $a_k$ are
elements of rank $2m-1$ in the interval $[x_1, \hat{1}]$. By
Proposition~\ref{Bn}, $[x_1, \hat{1}]=B_{2m}$. So $a_l\wedge a_k$ is
covered by $a_k$ and $a_l$, therefore $|A_l\cap A_k|= 2^{2m-2}$ and
similarly, $|A_s \cap A_k|= 2^{2m-2}.$

We have seen $|A_s\cap A_k|=2^{2m-2}$, $|A_l\cap A_k|=2^{2m-2}$ and
$|A_k|=2^{2m-1}$. Moreover, since we assumed $A_s \cap A_l=\phi $,
we conclude that $A_k=A_s\cup A_l$. On the other hand $A_k\cap
(\{x_1,\dots,x_t\}-{A_s\cup A_l})\neq \phi $, which is not possible
when $A_k=A_s\cup A_l$. This contradicts our assumption.
% We contradict the fact that both $|A_i\cap A_k|,|A_j\cap A_k|$ is nonzero. So $|A_i\cup
%A_j \cup A_k|=|A_i|+|A_j|+|A_k|-|A_i\cap A_k|-|A_j\cap A_k|\geq
%2^{2m}+ 2^{2m-2}> \frac{4m+1}{2m}2^{2m-1}= |\{x_1,...,x_t\}|$. So we
%contradict the assumption, that there is $A_i$ and $A_j$ , $|A_i\cap
%A_j|=0$. It yields that for all $1\leq i,j \leq {4m+1}$, $|A_i \cap
%A_j|\neq 0$.
Therefore $|A_i \cap A_j|\neq 0$ for $1\leq i,j \leq {4m+1}$. So for
every distinct pair $a_i$ and  $a_j$, there is an atom $x_h \in
A_i\cap A_j$. As above $[x_h, \hat{1}]=B_{2m}$, so there is at least
one element of rank $2m-2$ in this interval, $e_k$, $1\leq k \leq
{(4m+1)(2m-1)},$ and it is covered by both $a_i$ and $a_j$. In
addition, for every element $e_l$ of rank $2m-1$ in $Q$ $[e_l,
\hat{1}]$ is isomorphic to $B_2$. As a consequence, for every $e_l$
there is exactly one pair $a_i,a_j $ such that $e_l$ is covered by
them. Hence, the number of the disjoint pairs of elements of rank
$2m$ in poset $Q$ is at most the number of elements of rank $2m-1$.
That is, ${(4m+1)(2m-1)}\geq {(4m+1)(2m)}$ which is not possible.
This contradicts the assumption. So there is no poset $Q$ with the
described factorial and coatom functions, as desired.
\end{proof}
Lemma~\ref{evens}, implies the following.
\
\begin{corollary}\label{sheffer1}
Let $P$ be an Eulerian Sheffer poset of rank $2m+2$, $m \geq 2$,
with  $B(k)=k!$, for $1\leq k\leq 2m$.  $P$ cannot have the
following sequence of coatom functions: $C(n)=2(n-1)$, $2\leq n\leq
2m$, $C(2m+1)=4m+1$ and $C(2m+2)=4(2m+1).$
\end{corollary}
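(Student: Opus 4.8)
The plan is to deduce the corollary directly from Lemma~\ref{evens} by passing to the Sheffer interval lying below a coatom of $P$. The whole content of the statement is already contained in Lemma~\ref{evens}, so the proof should amount to recognizing that localizing $P$ below a coatom produces precisely the forbidden poset of that lemma.

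First I would fix a coatom $y$ of $P$. Since $P$ is finite of rank $2m+2 \geq 6$ with top element $\hat{1}$, such a $y$ exists and has rank $2m+1$, so $Q := [\hat{0},y]$ is a Sheffer interval of $P$ of rank $2m+1$. Next I would check that $Q$ is again an Eulerian Sheffer poset: it has unique minimum $\hat{0}$ and unique maximum $y$; every subinterval $[x,z]$ with $\hat{0}\leq x\leq z\leq y$ is an interval of $P$, hence graded and satisfying the Euler-Poincar\'e relation, so $Q$ is graded and Eulerian. Moreover every Sheffer interval $[\hat{0},z]$ with $z\leq y$ and every binomial interval $[x,z]$ with $x\neq\hat{0}$ of $Q$ is one of $P$, so $Q$ inherits the very same factorial functions $D$ and $B$ on the relevant ranks. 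In particular $B_Q(3)=B_P(3)=3!$ (using $B(k)=k!$ for $1\leq k\leq 2m$ together with $m\geq 2$, so that $3\leq 2m$), and the coatom function of $Q$ satisfies $C_Q(n)=D_Q(n)/D_Q(n-1)=C_P(n)=2(n-1)$ for $2\leq n\leq 2m$.

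It then remains to identify the top coatom function of $Q$. The coatoms of $Q$ are exactly the rank-$2m$ elements of $P$ lying below $y$, and their number is by definition $C_P(2m+1)$, the number of coatoms in a Sheffer interval of rank $2m+1$ in $P$, namely $4m+1$. Hence $Q$ is an Eulerian Sheffer poset of odd rank $2m+1$ with $B(3)=3!$ and coatom functions $C(n)=2(n-1)$ for $2\leq n\leq 2m$ and $C(2m+1)=4m+1$. This is precisely the configuration ruled out by Lemma~\ref{evens}, so we reach a contradiction, and no such $P$ can exist.

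I do not expect a genuine obstacle here, as the corollary is simply the localization of Lemma~\ref{evens} to the interval below a coatom. The only points requiring (routine) care are verifying that $[\hat{0},y]$ retains the Eulerian and Sheffer properties with unchanged factorial and coatom functions, and observing that the hypothesis $C(2m+2)=4(2m+1)$ is never used: the contradiction is already produced at rank $2m+1$, so the value of the coatom function at the top rank is irrelevant to the argument.
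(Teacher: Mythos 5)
Your proof is correct and is exactly the paper's intended argument: the paper states the corollary with no proof beyond ``Lemma~\ref{evens} implies the following,'' and your passage to the Sheffer interval $[\hat{0},y]$ below a coatom, checking that it inherits the Eulerian and Sheffer structure with $B(3)=3!$, $C(n)=2(n-1)$ for $2\leq n\leq 2m$, and $C(2m+1)=4m+1$, is precisely that deduction made explicit. Your side remark that the hypothesis $C(2m+2)=4(2m+1)$ is never used is also accurate; it is carried in the statement only because that is the specific case arising in the proof of Theorem~\ref{evensheffer}.
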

\
\begin{lemma}\label{2m+1}
Let $Q$ be an Eulerian Sheffer poset of rank $2m+2$, $m \geq 2$ with
the binomial factorial functions $B(k)=k!$, for $1\leq k\leq 2m+1$.
Then $Q$ cannot have the following sequence of coatom functions:
 $C(n)=2(n-1)$ for $2\leq n\leq 2m$, $C(2m+1)=4m-2$ and $C(2m+2)=2m+1$.
\end{lemma}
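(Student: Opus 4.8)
The plan is to argue by contradiction: assume such a $Q$ exists and produce an impossible (non-integer) cardinality. First I would pin down the Sheffer factorial function from the given coatom data. Since $D(k)=C(k)C(k-1)\cdots C(1)$ with $C(1)=1$, the relation $C(n)=2(n-1)$ for $2\le n\le 2m$ telescopes to $D(k)=2^{k-1}(k-1)!$ for $k\le 2m$, and then $D(2m+1)=(4m-2)\,2^{2m-1}(2m-1)!$ and $D(2m+2)=(2m+1)D(2m+1)$. Feeding these into Eq.(\ref{q0}) I would record the three counts I need: the number of coatoms (rank $2m+1$ elements) is $D(2m+2)/D(2m+1)=C(2m+2)=2m+1$; the number of atoms is $N_1=D(2m+2)/B(2m+1)=(2m-1)2^{2m-1}/m$; and for any $y$ of rank $2\le r\le 2m$ the number $|S(y)|=D(r)/B(r-1)=2^{r-1}$ of atoms below $y$, so in particular every rank-$2m$ element lies above exactly $2^{2m-1}$ atoms.

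Next I would exploit the binomial intervals. For any atom $x$, the interval $[x,\hat 1]$ is an Eulerian binomial poset of rank $2m+1$ with $B(k)=k!$, so Proposition~\ref{Bn} gives $[x,\hat 1]\cong B_{2m+1}$. Since $B_{2m+1}$ has exactly $2m+1$ coatoms while $Q$ has exactly $2m+1$ coatoms in total, every coatom of $Q$ lies above $x$; as $x$ was arbitrary, every coatom lies above every atom. Reading off the boolean structure of $[x,\hat 1]$, each rank-$2m$ element $w$ of $Q$ lying above $x$ is the meet of a unique pair of coatoms inside $[x,\hat 1]$, hence lies below exactly two coatoms there; and since any relation $x<w<a_k$ in $Q$ already occurs inside $[x,\hat 1]$, $w$ lies below exactly two coatoms of $Q$. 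Because every rank-$2m$ element has $|S(w)|=2^{2m-1}>0$ atoms beneath it, this shows every rank-$2m$ element of $Q$ lies below exactly two coatoms.

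The contradiction then comes from counting, for a fixed pair of coatoms $a_i,a_j$, the set $W_{ij}$ of rank-$2m$ elements lying below exactly $a_i$ and $a_j$. Using $[x,\hat 1]\cong B_{2m+1}$ again, each atom $x$ (which lies below both $a_i$ and $a_j$) determines the element $w_{x,\{i,j\}}=a_i\wedge a_j\in[x,\hat 1]$, a rank-$2m$ element below exactly $a_i,a_j$, so $w_{x,\{i,j\}}\in W_{ij}$; conversely, if $x<w$ with $w\in W_{ij}$ then $w$ is the unique corank-two element of $[x,\hat 1]$ below both $a_i,a_j$, whence $w=w_{x,\{i,j\}}$. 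Thus the map $x\mapsto w_{x,\{i,j\}}$ is a surjection of the atom set onto $W_{ij}$ whose fiber over $w$ is exactly $S(w)$, of size $2^{2m-1}$. This forces $N_1=|W_{ij}|\cdot 2^{2m-1}$, i.e. $|W_{ij}|=N_1/2^{2m-1}=(2m-1)/m$. Since $\gcd(2m-1,m)=1$ and $m\ge 2$, the right side is not an integer, contradicting that $|W_{ij}|$ is a non-negative integer.

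The main obstacle is that all the \emph{global} incidence counts among atoms, rank-$2m$ elements, and coatoms turn out to be perfectly consistent (for instance, double-counting incident (rank-$2m$ element, coatom) pairs gives the same total $2(2m+1)(2m-1)$ either way), so no contradiction is visible at that level. The crux, as in Lemma~\ref{evens}, is to isolate the right \emph{local} quantity --- here the number $|W_{ij}|$ of rank-$2m$ elements below a prescribed pair of coatoms --- and to show, via the boolean-lattice fibration, that it is simultaneously a non-negative integer and equal to the non-integer $(2m-1)/m$. The step requiring the most care is verifying that the fibers of $x\mapsto w_{x,\{i,j\}}$ are exactly the atom sets $S(w)$, so that every atom is counted once for the fixed pair $\{i,j\}$; this is precisely where the fact that every atom lies below every coatom is used.
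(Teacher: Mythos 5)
Your proof is correct, and although it shares the paper's scaffolding, the contradiction you reach is genuinely different. Both arguments start from the same data: the counts from Eq.~(\ref{q0}) (there are $C(2m+2)=2m+1$ coatoms, $(2m)^{2}-1$ elements of rank $2m$, each with $|S(w)|=2^{2m-1}$ atoms below it), Proposition~\ref{Bn} to identify binomial intervals with boolean lattices, and the fact that every coatom lies above every atom --- the paper gets this from $|S(a_i)|=\frac{4m-2}{2m}\cdot 2^{2m-1}=t$ via Eq.~(\ref{q0}), you from the equivalent observation that $[x,\hat{1}]\cong B_{2m+1}$ already contains all $2m+1$ coatoms of $Q$. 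From there the paper argues by pigeonhole: since $(2m)^{2}-1>m(2m+1)$ and each rank-$2m$ element is covered by exactly two coatoms, some pair of coatoms $a_k,a_l$ covers two distinct rank-$2m$ elements $e_i,e_j$; then $|E_i|+|E_j|=2^{2m}>t$ yields a common atom $x_r$, the boolean structure of $[x_r,a_k]\cong B_{2m}$ produces an element $c$ of rank $2m-1$ covered by both $e_i$ and $e_j$, and the $3$-interval $[c,\hat{1}]\cong B_3$ cannot contain two atoms sharing two common covers --- a local, structural contradiction. You instead fix an \emph{arbitrary} pair of coatoms and compute the exact number $|W_{ij}|$ of rank-$2m$ elements covered by precisely that pair, using the map $x\mapsto a_i\wedge a_j$ (computed in $[x,\hat{1}]\cong B_{2m+1}$) whose fibers are exactly the sets $S(w)$ of size $2^{2m-1}$; this forces $|W_{ij}|=N_1/2^{2m-1}=(2m-1)/m=2-\frac{1}{m}$, a non-integer for $m\geq 2$ --- an arithmetic contradiction. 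The two proofs are in effect dual: the paper shows some pair must have at least two common rank-$2m$ elements and that two is locally impossible, while you show every pair has exactly $2-\frac{1}{m}<2$ of them, which is impossible outright. Your route buys an exact uniform count, dispenses with both the pigeonhole and the analysis of the $3$-interval, and makes transparent why no purely global double count can succeed (as you verify, those are all consistent); the paper's route stays stylistically parallel to Lemmas~\ref{evens} and~\ref{sheffer} and needs only the existence of one bad configuration. Your one delicate step --- that the corank-two element of $[x,\hat{1}]$ below both $a_i$ and $a_j$ is unique and that its covers \emph{in $Q$} are exactly $a_i,a_j$ --- is correctly handled, and you rightly flag that it rests on all coatoms of $Q$ lying in $[x,\hat{1}]$, which is precisely where the hypothesis $C(2m+2)=2m+1$ enters.
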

\begin{proof}
We proceed by contradiction, assume that $Q$ is such a poset of rank
$2m+2$ as described above. By Eq.(\ref{q0}), we can   enumerate the
number elements of rank $k$ in this Sheffer poset of rank $n=2m+2$
for $1\leq k\leq n.$ Let $\{a_1,\dots,a_{2m+1}\}$,
$\{e_1,\dots,e_{{{(2m)}^2}-1}\}$ and $\{x_1,\dots,x_t\}$, where
$t={\frac{4m-2}{2m}\cdot2^{2m-1}}$, be the sets of elements of rank
$2m+1$, $2m$ and $1$ in poset $Q$, respectively.

With the same argument as Lemma~\ref{evens}, for any element $y$ of
at least $2$ we define  $S(y)$ to be the set of atoms in interval
$[\hat{0},y].$ Set $A_{j}=S(a_{j})$ for $1\leq j \leq {2m+1},$ and
also set $E_{j}=S(e_{j})$, $1\leq j \leq {{{(2m)}^2}-1}$. By
Eq.(\ref{q0}), $|E_{j}|=|S(e_j)|=2^{2m-1}$ for $1\leq j \leq
{{{(2m)}^2}-1}$ and also $|A_{i}|=|S(a_i)|=\frac{4m-2}{2m}\cdot
2^{2m-1}=t$, $1\leq i\leq 2m+1.$

For each element $e_i$ of rank $2m$,  $[e_i,\hat{1}]=B_2$. Hence,
each element $e_i$ of rank $2m$  covered by exactly two coatoms such
as $a_r, a_s$ where $1\leq r,s\leq 2m+1$ in $Q$. By Eq.(\ref{q0}),
the number of elements of rank $2m$ is $(2m)^2-1$ and also the
number of pairs of elements of rank $2m+1$ is $m(2m+1).$ We deduce,
there are at least two different coatoms such as $a_k, a_l$ that
both cover two different elements  $e_i, e_j$ for some particular
$i,j$ . We know the following facts:
\begin{enumerate}
\item $|A_k|=|A_l|={\frac{4m-2}{2m}\cdot2^{2m-1}}=|\{x_1, \dots,
x_t\}|=t$ \item $|E_i|=|E_j|=2^{2m-1}$ \item $E_i, E_j \subseteq A_k
=A_l =\{x_1,\dots,x_t\}$.
\end{enumerate}

By the above facts $|E_i|+|E_j|> |A_k|,|A_l|.$ Hence, there is at
least one atom $x_r \in E_i, E_j, A_k, A_l$ such that $e_i, e_j$ are
elements of rank $2m-1$ in the intervals $[x_r,a_l ]$ and
$[x_r,a_k]$ . By Proposition~\ref{Bn}, $[x_r,a_k]=[x_r,a_l]=B_{2m}$,
so there is an element $c$ of rank $2m-2$ in this interval $[x_r,a_l
]$ which is covered by $e_i$ and $e_j$. Therefore the interval $[c,
\hat{1}]$ has two elements $e_i,e_j$ of rank $1$ and they both are
covered by two elements $a_k,a_l$ of rank $2$. By
Proposition~\ref{Bn}, $[c,\hat{1}]=B_3.$ Since $B_3$, dose not have
two elements of rank $1$ which are  both covered by two elements of
rank $2$, it lead us to contradiction. There is no poset with
described conditions, as desired.

% of
%existence of poset $Q$ with mentioned binomial functions $B(k)=k!$
%for $1\leq k \leq 2m+1$ and the following coatom functions,
%$C(n)=2(n-1)$, $2\leq n\leq 2m$, $C(2m+1)=4m-2$ and $C(2m+2)=2m+1.$

\end{proof}

\

\begin{lemma}\label{sheffer}
Let $Q$ be an Eulerian Sheffer poset of rank $2m+2$, $m \geq 2$,
with binomial factorial function $B(k)=k!$ for $1\leq k\leq 2m$.
Then the poset $Q$ cannot have the following sequence of coatom
functions: $C(n)=2(n-1)$, $2\leq n\leq 2m$, $C(2m+1)=4m-1$ and
$C(2m+2)=\frac{4}{3}(2m+1).$
\end{lemma}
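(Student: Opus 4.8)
The structure of this proof will closely follow the two preceding lemmas (Lemmas~\ref{evens} and~\ref{2m+1}), since the hypotheses are nearly identical: an Eulerian Sheffer poset $Q$ of rank $2m+2$ with $B(k)=k!$ for $1\leq k\leq 2m$ and a prescribed sequence of coatom functions. The plan is to assume such a $Q$ exists and derive a contradiction by a careful count of the atoms lying below high-rank elements. First I would use Eq.(\ref{q0}) to enumerate the elements of small corank: since $D(k)=C(k)C(k-1)\cdots C(1)$ and $C(n)=2(n-1)$ for $2\leq n\leq 2m$, the Sheffer factorial function agrees with that of a poset whose $2m$-intervals are cubical, so I can read off that $|S(y)|=2^{r-1}$ for every element $y$ of rank $2\leq r\leq 2m$, where $S(y)$ denotes the set of atoms below $y$. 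I would then fix notation $a_1,\dots,a_{C(2m+2)}$ for the coatoms, $A_i=S(a_i)$, and similarly name the elements of rank $2m$ and the atoms $x_1,\dots,x_t$, computing $t$ and the various cardinalities explicitly from Eq.(\ref{q0}).

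The combinatorial heart of the argument is the same covering-incidence contradiction used in Lemma~\ref{2m+1}. Because $C(2m+2)=\tfrac{4}{3}(2m+1)$ must be a positive integer, the very first thing I would check is whether this forces $2m+1\equiv 0 \pmod 3$; if the resulting count of coatoms or of rank-$(2m+1)$ elements is non-integral, or if a pigeonhole comparison between the number of disjoint pairs of top elements and the number of elements one rank below already fails, the contradiction may come out immediately from arithmetic, exactly as the case $D(3)=10$ was eliminated in Lemma~\ref{d}. Assuming the counts are integral, I would argue as in Lemma~\ref{evens}: using $|S(a_i\wedge a_j)|=2^{\rho(a_i\wedge a_j)-1}$ together with $[x,\hat 1]\cong B_{2m}$ for each atom $x$ (which holds by Proposition~\ref{Bn} since $B(k)=k!$ up to rank $2m$), I would show first that no two of the sets $A_i$ can be disjoint, and then that each element of rank $2m$ is covered by exactly two coatoms while each such pair of coatoms meets in a well-controlled intersection. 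Comparing the number of covering pairs forced by these intersections against the total number of rank-$(2m)$ elements gives an inequality of the form (number of pairs) $\le$ (number of rank-$2m$ elements) that the prescribed coatom values violate.

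The step I expect to be the main obstacle is pinning down exactly which cardinality bound on $|A_i\cap A_j|$ the value $C(2m+1)=4m-1$ produces, and making the final pigeonhole inequality strict. In Lemmas~\ref{evens} and~\ref{2m+1} the top two coatom functions were chosen so that $|A_i|$ either equalled $t$ or exceeded $t/2$, which made the intersection analysis clean; here the slightly different value $C(2m+2)=\tfrac{4}{3}(2m+1)$ will change the size $|A_i|=\prod_{k} A(k)\cdot(\text{coatom factor})$ relative to $t$, and I must verify that this still forces overlaps large enough to embed the forbidden configuration (two elements of one rank each covered by the same two elements of the next rank) inside some interval isomorphic to $B_3$ or $B_{2m}$. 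The delicate point is ensuring the arithmetic of $4m-1$ and $\tfrac{4}{3}(2m+1)$ cooperates so that $|E_i|+|E_j|>|A_k|$ (the key strict inequality driving the contradiction in Lemma~\ref{2m+1}) holds here as well; once that strict inequality is in hand, the appeal to Proposition~\ref{Bn} that $B_3$ has no two rank-one elements both covered by two rank-two elements closes the proof verbatim.
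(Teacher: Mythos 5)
Your proposal matches the paper's own proof essentially step for step: the paper likewise mirrors Lemma~\ref{2m+1}, using the pigeonhole count (the $\frac{2}{3}(2m+1)(4m-1)$ elements of rank $2m$ outnumber the $\frac{2}{3}(2m+1)\bigl(\frac{4}{3}(2m+1)-1\bigr)$ coatom pairs for $m\geq 2$) to produce two coatoms $a_k,a_l$ covering two common rank-$2m$ elements $e_i,e_j$, then the strict inequality $|E_i|+|E_j|=2^{2m}>t=\frac{4m-1}{2m}\cdot 2^{2m-1}$ (your flagged delicate point, which does hold since $\frac{4m-1}{2m}<2$) to find a common atom, and finally the $[y,a_k]\cong B_{2m}$ and $[c,\hat{1}]\cong B_3$ forbidden-configuration contradiction via Proposition~\ref{Bn}. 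The only blemish is the sentence suggesting the prescribed coatom values \emph{violate} the pairs-versus-elements inequality: in fact that inequality holds, and it is precisely what forces the repeated covering pair, as your closing paragraph then correctly describes.
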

\begin{proof}
We proceed by contradiction. So, suppose
 $Q$ is such a poset of rank $2m+2 $ with the described factorial
functions. We enumerate the number of elements of rank $k$ in $Q$ as
 follows,
\begin{equation}\label{f2}
{\frac{D(2m+2)}{B(k)D(2m+2-k)}=\frac{C(2m+2)\cdots
C(2m+2-k+1)}{k!}.}
\end{equation}
 Thus, $\{a_1,\dots,a_{\frac{4}{3}{(2m+1)}}\}$, $ \{e_1,\dots, e_{\frac{4}{6}{(2m+1)(4m-1)}}\}$ are the sets of elements of rank $2m+1$ and $2m$ in $Q$,
 respectively. For every element $e_i$ of rank $2m$, $[e_i,\hat{1}]$ is isomorphic
to $B_2$. So, each element of rank $2m$ covered by exactly two
different elements of rank $2m+1$.

There are exactly ${\frac{4}{6}{(2m+1)(4m-1)}}$ elements of rank
$2m$ in $Q$, and we also know that there are
$(\frac{4}{6}(2m+1))(\frac{4}{3}(2m+1)-1)$ different pairs of
coatoms $\{a_i,a_j\}$ in $Q$, $1\leq i < j \leq
{\frac{4}{3}(2m+1)}.$  We conclude there are at least two different
coatoms $a_k,a_l$ such that they both cover two different elements
$e_i, e_j$ of rank $2m$.
 The interval $T=[\hat{0},a_k]$ has
binomial factorial functions $B_T(k)=k!$ for $1\leq k\leq 2m$ and
 coatom functions $C_T(n)=2(n-1)$ for $2\leq n\leq
2m$ and $C_T(2m+1)=4m-1$. Let $\{y_1,\dots, y_t\}$ be the set of
atoms in poset $T$ where $t={\frac{(4m-1)}{2m}.2^{2m-1}}$. Thus
$A_{k}=\{y_1,\dots, y_t\}$. Set $E_{j}=S(e_j), E_{i}=S(e_i)$, so
$E_j, E_i \subset A_k$. By Eq.(\ref{q0}),
$|E_{i}|=|E_{j}|=2^{2m-1}$, therefore $|E_i|+|E_j|>|A_k|$. We
conclude that there is at least one atom $y_1\in T$ which is below
$e_i, e_j$ and $a_k$.

 Proposition~\ref{Bn}, implies that $[y_1,a_k]=B_{2m}$.
%The elements $e_i, e_j$
%are elements of rank $2m-1$ of  $[y_1,a_k]$ and they both are
%covered by $a_k$.
 By the boolean lattice properties, there is an
element $c$ of rank $2m-2$ in $[y_1,a_k]$ such that $c$ is covered
by $e_i,e_j$. By Proposition~\ref{Bn},  $[c,\hat{1}]=B_3$. Consider
the interval $[c,\hat{1}]$, $a_k$ and $a_l$ are two elements of rank
$2$ in this interval and they both cover two elements $e_i$ and
$e_j$ of rank $1$. It contradicts the fact that $[c,\hat{1}]=B_3$.
We conclude that $[c,\hat{1}]\neq B_3$. It lead us to contradiction,
there is no poset $Q$ with describe conditions, as desired.

\end{proof}
 The following lemma can be obtained by
applying the proof of Lemma 4.8 in~\cite{MR1}.

\

\begin{lemma}\label{MRR}
Let $P$ and $P^{'}$ be two Eulerian Sheffer posets of rank $2m+2$,
$m\geq 2$, such that their binomial factorial functions and coatom
functions  agree up to rank $n\leq 2m$. That is $B(n)=B^{'}(n)$ and
$C(n)=C^{'}(n),$ where $m\geq 2$. Then the following equation holds,
\begin{equation}\label{mrf}
\frac{1}{C(2m+1)}\left(1-\frac{1}{C(2m+2)}\right)=
\frac{1}{C^{'}(2m+1)}\left(1-\frac{1}{C^{'}(2m+2)}\right).
\end{equation}
\end{lemma}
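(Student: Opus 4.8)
The plan is to extract a single scalar identity from the Euler--Poincar\'e relation Eq.(\ref{q5}), applied to an arbitrary Sheffer interval of rank $N=2m+2$, arranged so that the only quantities not shared by $P$ and $P'$ are $C(2m+1)$ and $C(2m+2)$. Writing Eq.(\ref{q5}) out for $N=2m+2$,
\[
1+\sum_{k=1}^{2m+2}(-1)^{k}\,\frac{D(2m+2)}{D(k)\,B(2m+2-k)}=0,
\]
I would substitute $D(2m+1)=C(2m+1)D(2m)$ and $D(2m+2)=C(2m+2)C(2m+1)D(2m)$, so that every appearance of the top two levels is expressed through $C(2m+1),C(2m+2)$ and the common value $D(2m)$. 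By hypothesis $D(k)=D'(k)$ and $B(k)=B'(k)$ for $k\le 2m$, so each factor $D(k)$ with $k\le 2m$ and each factor $B(2m+2-k)$ with $k\ge 2$ coincides for the two posets. This mirrors the binomial prototype Lemma~\ref{An}, as the concluding remark preceding the statement suggests.

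Next I would split the sum by its dependence on the two top coatom functions. The index $k=2m+2$ contributes the constant $1$ (the top element of the interval), which together with the leading $1$ of Eq.(\ref{q5}) gives a pure constant; the index $k=2m+1$ contributes $(-1)^{2m+1}D(2m+2)/D(2m+1)=-C(2m+2)$, a term in $C(2m+2)$ alone; and all remaining indices $1\le k\le 2m$ carry the common factor $D(2m+2)=C(2m+1)C(2m+2)D(2m)$. Collecting these, Eq.(\ref{q5}) reduces to an identity of the shape
\[
2-C(2m+2)+C(2m+1)\,C(2m+2)\,\Lambda=0,
\]
where $\Lambda$ is assembled only from $D(2m)$, the values $D(k),B(k)$ with $k\le 2m$, and $B(2m+1)$. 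Dividing by $C(2m+1)C(2m+2)$ and rearranging expresses a fixed combination of $\tfrac{1}{C(2m+1)}$ and $\tfrac{1}{C(2m+2)}$ purely in terms of $\Lambda$. Since $P'$ satisfies the same identity with the \emph{same} $\Lambda$, equating the two sides yields Eq.(\ref{mrf}).

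The main obstacle is the $k=1$ term, the atom count $D(2m+2)/(D(1)B(2m+1))=D(2m+2)/B(2m+1)$, which drags in the binomial value $B(2m+1)$ --- one rank above what the hypothesis literally names. Unlike in the symmetric binomial case of Lemma~\ref{An}, where the corresponding term collapses to $A(2m+2)$ because $B(1)=1$, here $B(2m+1)$ need not automatically agree for $P$ and $P'$: by Theorem~\ref{odd} a binomial interval of odd rank $2m+1$ may be any $\boxplus^{\alpha}(B_{2m+1})$ or $\boxplus^{\alpha}(T_{2m+1})$. I would resolve this by regarding the binomial factorial function as part of the common data: the binomial intervals of a Sheffer poset are Eulerian binomial posets, fixed first and identical for $P$ and $P'$ in every application of the lemma (in particular against the cubical lattice in Theorem~\ref{evensheffer}, where $B(k)=k!$ at all ranks, so $m\ge 2$ keeps the relevant ranks $\ge 4$ within Theorem~\ref{even}). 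With $B(2m+1)=B'(2m+1)$ the $k=1$ term is shared, folds into $\Lambda$, and the argument closes exactly as in Lemma~\ref{An}. As a safeguard against miscollecting $\Lambda$, I would run the whole computation on the cubical lattice, where $C(k)=2(k-1)$ and $B(k)=k!$ make every term explicit and pin down the precise coefficients in the final identity.
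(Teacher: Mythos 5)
Your route is the paper's own proof: the paper establishes Lemma~\ref{MRR} by invoking the proof of Lemma~4.8 in~\cite{MR1}, which is exactly the computation you set up --- write Eq.~(\ref{q5}) for a Sheffer interval of rank $2m+2$, peel off $k=2m+2$ (contributing $+1$) and $k=2m+1$ (contributing $-C(2m+2)$), and factor $D(2m+2)=C(2m+1)C(2m+2)D(2m)$ out of the remaining terms. However, push your own algebra one line further: your collected identity $2-C(2m+2)+C(2m+1)C(2m+2)\Lambda=0$ rearranges to
\begin{equation*}
\frac{1}{C(2m+1)}\left(1-\frac{2}{C(2m+2)}\right)=\Lambda ,
\end{equation*}
so the invariant quantity carries a $2$, not a $1$; your derivation does \emph{not} literally yield Eq.~(\ref{mrf}) as printed. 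The printed statement is in fact a typo (Lemma~4.8 of~\cite{MR1} has $2/C(2m+2)$), and the paper's own applications confirm this: in Theorem~\ref{evensheffer} the four listed solutions $(B,A)=(4m-2,\,2m+1)$, $(4m-1,\,\tfrac{4}{3}(2m+1))$, $(4m,\,4m+2)$, $(4m+1,\,4(2m+1))$ all satisfy $\frac{1}{B}\left(1-\frac{2}{A}\right)=\frac{1}{4m+2}$ but give unequal values of $\frac{1}{B}\left(1-\frac{1}{A}\right)$, and the bound $4m-2\leq C(2m+1)\leq 4m+1$ extracted there from $2m+1\leq C(2m+2)<\infty$ only follows from the $2$-version (the $1$-version would force $C(2m+1)=4m$ outright). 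Your proposed cubical-lattice safeguard would have caught this had you compared the resulting constant against the target identity rather than a single poset.

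The ``main obstacle'' you isolate --- the $k=1$ term $D(2m+2)/B(2m+1)$ --- is a genuine defect of the printed hypotheses, not of your argument, and your repair is the correct one. Indeed the lemma is false as literally stated: for distinct $\alpha$ the posets $\Sigma^{*}\left(\boxplus^{\alpha}(B_{2m+1})\right)$ are Eulerian Sheffer posets of rank $2m+2$ with identical binomial and coatom functions up to rank $2m$ (namely $B(k)=k!$, $C(2)=2$, $C(k)=k-1$ for $3\leq k\leq 2m$), yet $\frac{1}{C(2m+1)}\left(1-\frac{2}{C(2m+2)}\right)=\frac{1}{2m}\left(1-\frac{2}{\alpha(2m+1)}\right)$ depends on $\alpha$; the difference is detected exactly by $B(2m+1)=\alpha(2m+1)!$. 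So the hypothesis $B(2m+1)=B^{'}(2m+1)$ must be added, precisely as in Ehrenborg--Readdy's Lemma~4.8, where the binomial factorial functions are assumed to agree at all ranks, and precisely as holds in every application in this paper (in Theorem~\ref{evensheffer} the value $\alpha=1$ is forced by Eq.~(\ref{q9}) \emph{before} the lemma is invoked, and similarly in Theorems~\ref{sheffer6} and~\ref{2even}). With that hypothesis folded into the shared data, your proof is complete and is the intended one.
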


\

\begin{proof}[Proof of Theorem~\ref{evensheffer}]
Let $C(k)$ and ${C^{'}}(k)=2(k-1)$ be the coatom functions of the
Eulerian Sheffer poset $P$ and $C_n$, the cubical lattice of rank
$n$, for $2\leq k\leq n=2m+2$.  We only need to show that
$C(n)={C^{'}}(n)=2(n-1)$ for $2\leq n\leq 2m+2$ We prove this claim
by induction on $m.$ By Lemma~\ref{thm4}, $C(4)=C^{'}(4)=6$ and the
claim is hold for $m=1.$ By induction hypothesis,
$C(n)=C^{'}(n)=2(n-1)$ for $2\leq n\leq 2m$. Set $B=C(2m+1)$ and
$A=C(2m+2)$. Theorem~\ref{odd} implies that $B(k)=k!$ for $1\leq
k\leq 2m$ and there is a positive integer $\alpha$ such that
$B(2m+1)=\alpha(2m+1)!$. We know that $D(k)=2^{k-1}.(k-1)!$ for
$1\leq k\leq 2m$, so $D(2m+1)=B2^{2m-1}(2m-1)!$ and
$D(2m+2)=AB2^{2m-1}(2m-1)!$. Since $P$ is  an Eulerian Sheffer
poset, the {Euler-Poincar\'e} relation implies that,
\begin{equation}\label{q8}
1+\sum_{k=1}^{2m+2}{\frac{(-1)^kD(2m+2)}{D(k)B(2m+2-k)}}=0.
\end{equation}
By substituting the values of the factorial functions we have,
\begin{equation}\label{q7}
2-A+\frac{AB}{2}\left[\frac{1}{2m}-\frac{1}{2m(2m+1)}+\frac{2^{2m}}{2m(2m+1)}-\frac{2^{2m}}{2\alpha
m(2m+1)}\right]=0.
\end{equation}
Thus,
\begin{equation}\label{q9}
A\left(1-B\left(\frac{2\alpha{m}+(\alpha-1)2^{2m}}{4{\alpha}m(2m+1)}\right)\right)=2.
\end{equation}
%which implies,
%\begin{equation}\label{q10}
%A(1-B(\frac{2\alpha{m}+(\alpha-1)2^{2m}}{4{\alpha}m(2m+1)}))\cdot
%\end{equation}
We can see that if $\alpha\geq 2$, the left side of Eq.(\ref{q9})
become negative, therefore $\alpha=1$ and posets $P$ and $C_{2m+2}$
have the same binomial factorial functions. Since $2m+1=A(2m+1)\leq
C(2m+2)< \infty$, Lemma~\ref{MRR} implies that $4m-2\leq
C(2m+1)=B\leq 4m+1$.  Since $\alpha=1,$ Eq.(\ref{q9}) implies that
$2-A+\frac{AB}{4m+2}=0$. Therefore $A$ and $B$ should satisfy one of
the following cases:
\begin{enumerate}
\item[$(1)$] $B=4m-2$ and $A=2m+1$. \item[$(2)$] $B=4m-1$
and $A=\frac{4}{3}(2m+1)$. \item[$(3)$] $B=4m$ and $A=4m+2$.
\item[$(4)$] $B=4m+1$ and $A=4(2m+1)$.
\end{enumerate}
As we have discussed in Corollary~\ref{sheffer1} as well as
Lemma's~\ref{2m+1} and ~\ref{sheffer}, the cases $(1)$, $(2)$, $(4)$
are not possible. The case $(3)$ happens in the cubical lattice of
rank $2m+2$, $C_{2m+2}$. Thus, $P$ has same factorial functions as
$C_{2m+2}$, as desired.
\end{proof}

 Classification of the factorial functions of Eulerian
Sheffer posets of odd rank $n=2m+1\geq 5$ with $B(3)=6$ and $D(3)=8$
is still remaining open. Let $\alpha$ be a positive integer and set
$Q_{\alpha}={{\boxplus^{\alpha}}{(C_{2m+1})}}$. It can be seen that
$Q_{\alpha}$ is an Eulerian Sheffer poset and it has the following
factorial functions $D(k)= 2^{k-1}(k-1)!$ for $1\leq k \leq n-1$,
$D(n)={\alpha}\cdot 2^{n-1}(n-1)! $ and $B(k)=k!$ for $1\leq k \leq
{n-1}.$ We ask the following question:

\

\textbf{Question}: Let $P$ be an Eulerian Sheffer poset of odd rank
$n=2m+1\geq 5$ with  $B(3)=6, D(3)=8$. Is there  a positive integer
$\alpha$, such that  $P$ has the same factorial function as poset
$Q_{\alpha}={{\boxplus^{\alpha}}{(C_{2m+1})}}$, where $C_{2m+1}$ is
a cubical lattice of rank $2m+1$.

 \subsubsection{\bf{Characterization of the structure of Eulerian Sheffer posets of rank $n\geq 5$ for which $B(3)=3!$,
 and $D(3)=3!=6.$}}\label{d6}

\

In this section, we prove the following: \

\begin{theorem}\label{sheffer6} Let $P$ be an Eulerian Sheffer poset of rank $n \geq
3$ with  $B(3)=D(3)=3!=6$ for $3$-intervals. $P$ satisfy one of the
following cases:
\begin{enumerate}
\item[$(i)$]  $n$ is an odd.  There is an integer $k \geq 1$
such that $P={{\boxplus^k}{(B_{n})}}$. \item[$(ii)$] $n$ is an even.
$P=B_n$.
\end{enumerate}
\end{theorem}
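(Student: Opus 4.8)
The plan is to reduce the whole statement to the already-established binomial classification. The bridge is the observation that a Sheffer interval in which the Sheffer and binomial factorial functions coincide and equal $k!$ below its top rank is in fact a \emph{binomial} poset: in such an interval $[\hat{0},y]$ of rank $r$, every $k$-interval with $k<r$ has exactly $k!$ maximal chains, whether or not it contains $\hat{0}$, and there is a unique $r$-interval, so the binomial condition holds. Being a subinterval of $P$ it is also Eulerian, so Proposition~\ref{Bn} and Theorem~\ref{odd} apply to it. First I would record the binomial factorial function: for an atom $a$, the interval $[a,\hat{1}]$ is an Eulerian binomial poset of rank $n-1$ with $B(3)=6$, so Theorems~\ref{even} and~\ref{odd} give $B(k)=k!$ for $1\le k\le n-1$. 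Next, by Lemma~\ref{rank3} the hypothesis $D(3)=6$ forces every rank-$3$ Sheffer interval to be a single $P_{3}=B_{3}$, so $D(3)=3!$.

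The core is a strong induction on $r$ proving $D(r)=r!$ for all $r\le n-1$. For even $r$, feeding the known values $D(j)=B(j)=j!$ for $j<r$ into the Euler--Poincar\'e relation~(\ref{q5}) for an $r$-Sheffer interval collapses, via the identity $\sum_{j=0}^{r}(-1)^{j}/(j!(r-j)!)=0$, to $D(r)=r!$. The delicate case is odd $r$ with $5\le r\le n-1$, where~(\ref{q5}) for the $r$-interval itself is vacuous and hence gives no information about $D(r)$. Here I invoke the bridge: the $r$-Sheffer interval $[\hat{0},y]$ is binomial and Eulerian of odd rank with $B(3)=6$, so by Theorem~\ref{odd} it equals ${\boxplus^{\ell}}{(B_{r})}$ for some integer $\ell\ge 1$, giving $D(r)=\ell\cdot r!$.

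To pin down $\ell$ I would lift the constraint one rank up. Since $r\le n-1<n$, the element $y$ is covered by some $z$ with $\rho(z)=r+1\le n$; applying~(\ref{q5}) to the \emph{even} $(r+1)$-Sheffer interval $[\hat{0},z]$ and substituting the known factorial values below rank $r+1$ (together with $D(r)=\ell\cdot r!$) yields a closed form for $D(r+1)$ whose denominator is $(r+1)-\ell\bigl((r+1)-2\bigr)$. Positivity of $D(r+1)$, which is a number of maximal chains, then forces $\ell<1+2/(r-1)\le 2$, so $\ell=1$ and $D(r)=r!$, completing the induction.

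Finally I would assemble the two cases. If $n$ is odd, then $D(k)=B(k)=k!$ for all $k\le n-1$, so $P$ itself is an Eulerian binomial poset of odd rank with $B(3)=6$; Theorem~\ref{odd} gives $P={\boxplus^{k}}{(B_{n})}$ (with $P=B_{3}$, i.e.\ $k=1$, when $n=3$), which is case~$(i)$. If $n$ is even, the even step of the induction applied to the top interval $[\hat{0},\hat{1}]$ yields $D(n)=n!$ as well, so $P$ is an Eulerian binomial poset with $B(k)=k!$ for all $k\le n$, and Proposition~\ref{Bn} gives $P=B_{n}$, which is case~$(ii)$. The main obstacle is precisely the odd interior step: the Euler--Poincar\'e relation alone cannot determine $D$ at odd ranks, so the entire argument hinges on transporting the constraint to the even rank directly above and extracting $\ell=1$ from the positivity of a maximal-chain count.
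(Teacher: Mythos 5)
Your opening step contains a genuine gap. You claim that for an atom $a$ the interval $[a,\hat{1}]$ satisfies $B(k)=k!$ for all $1\leq k\leq n-1$ ``by Theorems~\ref{even} and~\ref{odd}.'' When $n$ is even, $n-1$ is odd, and Theorem~\ref{odd} only gives $[a,\hat{1}]\cong {\boxplus^{\ell'}}{(B_{n-1})}$ for some integer $\ell'\geq 1$, i.e.\ $B(n-1)=\ell'\,(n-1)!$; nothing in the hypotheses rules out $\ell'>1$ a priori (compare $\Sigma^{*}({\boxplus^{\alpha}}{(B_{n-1})})$, an Eulerian Sheffer poset with $B(n-1)=\alpha\,(n-1)!$ --- it is excluded here only because it has $D(3)=4$, which is a conclusion to be earned, not an assumption). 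This silent $\ell'=1$ enters your argument at exactly its most delicate point: for even $n$, the boundary case $r=n-1$ of your odd step lifts to $[\hat{0},z]=[\hat{0},\hat{1}]$, and the Euler--Poincar\'e relation~(\ref{q5}) there contains the term $D(n)/(D(1)B(n-1))$. With $B(n-1)$ unknown, that relation has three unknowns $D(n)$, $\ell$, $\ell'$ rather than the two you solve for, so your closed form for $D(r+1)$ with denominator $(r+1)-\ell(r-1)$ is not established in precisely the case you need to finish even rank. (Your odd-$n$ case is unaffected, since there $n-1$ is even and Theorem~\ref{even} really does give $[a,\hat{1}]\cong B_{n-1}$; likewise $B(j)=j!$ for $j\leq n-2$ is fine, since proper subintervals of ${\boxplus^{\ell'}}{(B_{n-1})}$ are boolean.)

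The gap is local and your own positivity trick closes it: writing $D(n-1)=\ell\,(n-1)!$ and $B(n-1)=\ell'\,(n-1)!$, relation~(\ref{q5}) for the top interval becomes $2+D(n)\bigl[\frac{2}{(n-1)!}-\frac{2}{n!}-\frac{1}{(n-1)!}\bigl(\frac{1}{\ell}+\frac{1}{\ell'}\bigr)\bigr]=0$, so $D(n)>0$ forces $\frac{1}{\ell}+\frac{1}{\ell'}>2-\frac{2}{n}$; if either multiplier is at least $2$ the left side is at most $\frac{3}{2}$, which is less than $2-\frac{2}{n}$ for even $n\geq 6$, so $\ell=\ell'=1$ and $D(n)=n!$ (the case $n=4$ being the base case, Lemma~\ref{thm4}). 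With that patch your route is correct and genuinely different in mechanics from the paper's: the paper handles odd $n$ by adjoining $\hat{0},\hat{1}$ to a connected component of $P-\{\hat{0},\hat{1}\}$ (morally your ``bridge''), and handles even $n$ by the coatom-function identity of Lemma~\ref{MRR}, which \emph{explicitly} carries the unknown multiplier as $C(n-1)=\alpha(n-1)$ and eliminates it against the coatom data of $B_n$ --- exactly the bookkeeping your draft skipped at rank $n-1$.
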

\begin{proof}
We proceed by induction on $n$. Theorem ~\ref{rank3} and
Lemma~\ref{thm4} imply that this theorem holds for $n=3,4$. Assume
that Theorem~\ref{sheffer6} holds for $n\leq m$, we wish to show
that it also holds for $n=m+1\geq 5$. This problem divides into the
following  cases:
\begin{enumerate}
\item[$(i)$] $n=m+1$ is odd. Consider poset $Q$ which  is
obtained by adding $\hat{0}$ and $\hat{1}$ to a connected component
of $P-\{\hat{0},~\hat{1}\}$. So
 $Q$ is an Eulerian Sheffer poset with  $B(3)=D(3)=3!=6$ . By induction
hypothesis, every intervals of rank $k\leq m$ isomorphic to $B_k$.
So, the Sheffer and binomial factorial functions  of $Q$ the boolean
lattice of rank $m+1$ are the same up to rank $m=n-1$. Therefore,
$Q$ and also $P$ are binomial posets. Theorem~\ref{odd} implies
there is a positive integer $k$ such that
$P={{\boxplus^k}{(B_{n})}}$, as desired.
\

\item[$(ii)$] $n=m+1$ is even. We proceed by induction on  $n$, the
rank of $P$. Let $C(k)$ and $C^{'}(k)=k$ be the coatom functions of
posets $P$ and $B_n$, respectively, where $k\leq n.$ By induction
hypothesis $C(k)=C^{'}(k)$ for $k\leq n-2.$ So, Lemma~\ref{MRR}
implies that
\begin{equation}\label{w1}
\frac{1}{C(n-1)}\left(1-\frac{1}{C(n)}\right)=
\frac{1}{C^{'}(n-1)}\left(1-\frac{1}{C^{'}(n)}\right).
\end{equation}
By induction hypothesis,  there is a positive integer $\alpha$ such
that $C(n-1)=\alpha(n-1)$. Moreover, we know that $C^{'}(n-1)=n-1$
and $C^{'}(n)=n$. Eq.(\ref{w1}) implies that $\alpha=1$ and
$C(n)=n$, so poset $P$ has the same factorial function as $B_n$ and
$P=B_n$, as desired.
\end{enumerate}
\end{proof}

\subsubsection{\bf{Characterization of the structure of Eulerian Sheffer posets of
rank $n\geq 5$ for which $B(3)=3!$ and $D(3)=4$ }}\label{d4}

\

Let $P$ be an Eulerian Sheffer poset of rank $n\geq 5,$ with
$B(3)=3!$ and $D(3)=4$. In this section we show that in case
$n=2m+2$, $P=\Sigma^{*}({{{\boxplus^\alpha}{(B_{2m+1})}}})$ for some
$\alpha \geq 1$ and in case $n=2m+1$,
$P={{\boxplus^\alpha}{(\Sigma^{*}(B_{2m}))}}$, for some $\alpha \geq
1$.

\begin{figure}[htp3]
\begin{center}
\includegraphics[height=1.8in]{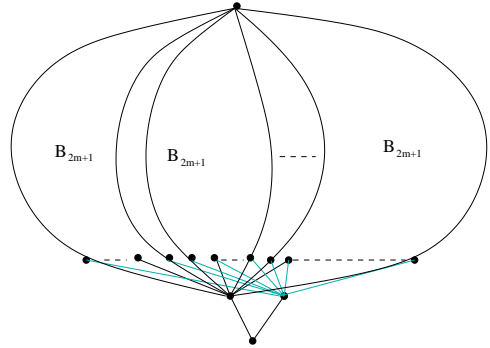}
\caption{$\Sigma^{*}({{{\boxplus^\alpha}{(B_{2m+1})}}})$}
\label{fig:4}
\end{center}
\end{figure}

 \begin{theorem}\label{2even}
Let $P$ be an Eulerian Sheffer poset of even rank $n=2m+2\geq 4$
with  $B(3)=3!$ and $ D(3)=4$.
%, this condition implies that $P$ has
%the binomial factorial function $B(2m+1)=\alpha (2m+1)!$ and
%$B(k)=k!$ where $\alpha$ is a positive integer and $1\leq k<n=2m+1$.
Then $P=\Sigma^{*}({{{\boxplus^\alpha}{(B_{2m+1})}}})$, where
$\alpha=\frac{B(2m+1)}{(2m+1)!}$ is positive integer, as a
consequence $P$ has the following binomial and Sheffer factorial
functions:
\begin{enumerate}
\item[] $B(k)=k!$ for  $1\leq k\leq 2m$, and
$B(2m+1)=\alpha(2m+1)!,$ \item[] $D(1)=1$, $D(k)=2(k-1)!$ for $2\leq
k \leq 2m+1$, and $D(2m+2)=2\alpha(2m+1)!.$
\end{enumerate}
\end{theorem}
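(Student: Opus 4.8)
The plan is to proceed by induction on $m$ and to reconstruct the global structure of $P$ from the known structure of its proper intervals. First I would analyze the atoms of $P$: since $D(3)=4$ and $D(2)=2$, the coatom function satisfies $C(3)=D(3)/D(2)=2$, so every Sheffer $3$-interval $[\hat{0},y]$ has exactly two elements of rank $2$ covering $\hat{0}$ through precisely two atoms. Combined with the fact that $B(2)=2$ forces every rank-$2$ interval to be $B_2$, this should show that $P$ has exactly two atoms $a_1,a_2$, and that every element of rank $\ge 2$ lies above both of them. This is exactly the picture of a dual suspension: removing $a_1,a_2$ and collapsing should recover a poset $P'$ with a new $\hat{0}$ whose atoms are the rank-$2$ elements of $P$. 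The key computation here is that the Sheffer factorial function of a dual suspension is $D_{\Sigma^*(P')}(k)=2B_{P'}(k-1)$ (Definition~\ref{dual}), which matches the claimed $D(k)=2(k-1)!$ once we know $B_{P'}(k-1)=(k-1)!$ up to the top rank.

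Next I would identify $P'$, the poset obtained by deleting $a_1,a_2$ and adjoining a fresh minimal element below the former rank-$2$ elements of $P$. Since every interval $[x,y]$ with $x\neq\hat 0$ in $P$ has $B(3)=6$, and since these binomial intervals are unaffected by the dual-suspension operation, $P'$ is an Eulerian poset all of whose relevant intervals are binomial with $B(3)=6$. The aim is to show $P'$ is itself an Eulerian binomial poset of \emph{odd} rank $2m+1$ with $B(k)=k!$ in low ranks. At that point Theorem~\ref{odd} applies: an Eulerian binomial poset of odd rank $2m+1$ with $B(3)=6$ is of the form $\boxplus^{\alpha}(B_{2m+1})$. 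Reattaching the two dual-suspension elements then yields $P=\Sigma^*(\boxplus^{\alpha}(B_{2m+1}))$, and computing factorial functions gives $B(k)=k!$ for $1\le k\le 2m$, $B(2m+1)=\alpha(2m+1)!$, and $D(k)=2B_{P'}(k-1)$, which is exactly the claimed list with $\alpha=B(2m+1)/(2m+1)!$.

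The main obstacle will be rigorously justifying that deleting the two atoms and reattaching a single minimal element produces a \emph{graded, Eulerian, binomial} poset $P'$ of rank $2m+1$, rather than merely a poset with the right numerical data. Concretely, one must verify that: (a) the two atoms $a_1,a_2$ are each below \emph{every} element of positive rank, so that the rank-$2$ elements of $P$ become the atoms of $P'$ and no order relations are lost; (b) the resulting intervals $[\hat 0',y]$ in $P'$ coincide, as posets, with the binomial intervals $[x,y]$ of $P$ (where $x$ is a rank-$2$ element), so that $P'$ really is binomial with the inherited factorial function; and (c) the Euler--Poincar\'e relation transfers, which should follow from Lemma~\ref{gon} once we know $P'$ has odd rank and all proper intervals are Eulerian. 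The structural claim (a) is the crux, and I expect it to follow from counting: Eq.(\ref{q0}) combined with $C(3)=2$ forces exactly two atoms, and the isomorphism type of each Sheffer interval (every $[\hat 0,y]$ being a dual suspension of a boolean interval, by the induction hypothesis applied to lower ranks) pins down the covering relations so that both atoms sit below everything.

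Finally I would close the induction. The base case $2m+2=4$ is handled by Lemma~\ref{thm4}, case $(8)$, where $B(3)=3!$, $D(3)=4$, $D(4)=2\cdot 3!$ matches $\Sigma^*(B_3)$. For the inductive step, the induction hypothesis (on the smaller even and odd cases already classified in this subsection, together with Theorem~\ref{odd} for the binomial core $P'$) supplies the isomorphism types of all proper intervals, and the atom/coatom analysis above upgrades this to a global isomorphism $P\cong\Sigma^*(\boxplus^{\alpha}(B_{2m+1}))$. Once the isomorphism is established, the factorial functions follow immediately from Definition~\ref{dual} and the known values $B(k)=k!$, $B(2m+1)=\alpha(2m+1)!$, giving $D(1)=1$, $D(k)=2(k-1)!$ for $2\le k\le 2m+1$, and $D(2m+2)=2\alpha(2m+1)!$, completing the proof.
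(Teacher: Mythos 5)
Your overall architecture (exactly two atoms, strip them off, identify the binomial core via Theorem~\ref{odd}, reattach as a dual suspension) matches the endgame of the paper's proof, but the step you yourself flag as the crux, your claim (a), is a genuine gap, and the justification you offer for it cannot work. You propose to deduce that $P$ has exactly two atoms from the local data $D(3)=4$, $C(3)=2$ together with Eq.(\ref{q0}). This is impossible: the number of atoms of $P$ is $D(2m+2)/\bigl(D(1)B(2m+1)\bigr)$, which depends on the top-rank Sheffer factorial values, not on $D(3)$. Indeed the odd-rank posets $\boxplus^{\beta}\bigl(\Sigma^{*}(B_{2m})\bigr)$ of Theorem~\ref{3even} satisfy all the same local conditions --- $B(3)=6$, $D(3)=4$, every Sheffer $3$-interval has exactly two atoms --- yet have $2\beta$ atoms. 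So ``two atoms globally'' is a parity phenomenon, and any correct proof of it must invoke the Euler--Poincar\'e relation at the full rank $2m+2$, which your sketch never does. The paper supplies precisely this missing ingredient: it first pins down the two unknown coatom values by comparing $P$ with the model poset $\Sigma^{*}(\boxplus^{\alpha}(B_{2m+1}))$ via Lemma~\ref{MRR} (which packages the top-rank Euler--Poincar\'e comparison), using the lower bounds $C(2m+1)\geq A(2m)=2m$ and $C(2m+2)\geq A(2m+1)=\alpha(2m+1)$ to force $C(2m+1)=2m$ and $C(2m+2)=\alpha(2m+1)$. Only then does the count $D(2m+2)/B(2m+1)=2$ give the two atoms, after which Theorem~\ref{odd} applied to the intervals $[\hat{0}_1,\hat{1}]$ and $[\hat{0}_2,\hat{1}]$, plus a rank-by-rank element count showing every element of rank at least $2$ lies in both intervals, yields $P=\Sigma^{*}(\boxplus^{\alpha}(B_{2m+1}))$.

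A secondary slip: your claim (b), that the intervals $[\hat{0}',y]$ of the stripped poset $P'$ coincide with the binomial intervals $[x,y]$ of $P$ for $x$ of rank $2$, is false as stated (for $P=\Sigma^{*}(B_3)$ one has $[\hat{0}',\hat{1}]=B_3$ while $[x,\hat{1}]=B_2$). What is true, and what your reconstruction needs, is that the maximal-chain count of $[\hat{0}',y]$ equals $D(\rho(y)+1)/2$, so that $P'$ is automatically binomial once $P$ is known to be a dual suspension --- but that again presupposes (a). Your route is therefore salvageable only by inserting the Lemma~\ref{MRR} computation (or an equivalent top-rank Euler--Poincar\'e argument) before the structural reconstruction, which is exactly what the paper does.
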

\begin{proof}
By Theorem~\ref{odd}, we know that there is a positive integer
$\alpha$ such that $P$ has the binomial factorial function
$B(2m+1)=\alpha (2m+1)!$ and $B(k)=k!$, $1\leq k<n=2m+1$.
 We proceed by induction on $m.$ The case $m=1$ implies that
$\alpha =1$, $B(3)=3!$ and $D(3)=4.$ By applying Lemma ~\ref{thm4},
it can be seen that the poset $P$ has the same  factorial functions
as $\Sigma^{*}{B_3}$; therefore, poset $P$ has two atoms and its
binomial $3$-intervals are isomorphic to $B_3.$ We conclude that
$P=\Sigma^{*}{B_3}$  and so Theorem ~\ref{2even} holds for $m=1.$
 %As we see in Lemma
%~\ref{thm4}'s proof, the number of elements of rank $2, 3$ in $P$ is
%$3,$  and also $P$ has two elements of rank $1$. By lemma
%~\ref{binomial3}, binomial intervals of rank $3$ in $P$ are
 %isomorphic to $B_3.$
 %By considering the number of elements of ranks $1, 2,
 %$3$ in $P$, it can be seen that $P=\Sigma^{*}{B_3},$ so Theorem
%~\ref{2even} holds for $m=1.$
 In case $m>1$, by Theorem~\ref{odd}, $Q={{\boxplus^\alpha}{(B_{2m+1})}}$ is the
only Eulerian binomial poset of rank $2m+1$ with the binomial
factorial functions $B(k)=k!$ for $1\leq k \leq 2m$ and
$B(2m+1)=\alpha (2m+1)!,$  where $\alpha$ is a positive integer. Set
$P^{'}=\Sigma^{*}{Q}=\Sigma^{*}{{{\boxplus^\alpha}{(B_{2m+1})}}}$.
It can be seen that $P^{'}$ is an Eulerian Sheffer poset of rank
$2m+2$ with  coatom functions $C^{'}(2m+2)= \alpha(2m+1)$ and
$C^{'}(k)=(k-1)$ for $3\leq k \leq {2m+1} $ as well as $C^{'}(2)=2.$

By induction hypothesis, the theorem holds for $m-1$ and $n=2m$. We
wish to show it also holds for $m$ and $n=2m+2.$ Let $C(k)$, $2\leq
k\leq 2m+2$, be the coatom function of $P$ of rank $2m+2$ which
satisfies Theorem conditions. By induction hypothesis, $C(k)=2(k-1)$
for $2\leq k\leq 2m$. Lemma~\ref{MRR}, implies the following
Eq.(\ref{g1})
\begin{equation}\label{g1}
\frac{1}{C(2m+1)}\left(1-\frac{1}{C(2m+2)}\right)=
\frac{1}{C^{'}(2m+1)}\left(1-\frac{1}{C^{'}(2m+2)}\right).
\end{equation}
 By substituting the values of $C^{'}(2m+2)$ and $C^{'}(2m+1)$, we
 have
\begin{equation}\label{g2}
\frac{1}{C(2m+1)}\left(1-\frac{1}{C(2m+2)}\right)=
\frac{1}{2m}\left(1-\frac{1}{\alpha(2m+1)}\right).
\end{equation}
The poset  $P$ has the binomial factorial functions $B(2m+1)=\alpha
(2m+1)!$, where $\alpha$ is a positive integer, and $B(k)=k!$ for
$1\leq k<2m+1$. We conclude that $A(2m+1)=\alpha(2m+1)$ and
$A(2m)=2m$. So $C(2m+2)\geq A(2m+1)=\alpha(2m+1)$ as well as
$C(2m+1)\geq A(2m)=2m.$ Eq.(\ref{g2}) implies that $C(2m+1)=2m$ and
also $C(2m+2)={\alpha(2m+1)}.$ By induction hypothesis,
$D(k)=2(k-1)!$ for $2\leq k \leq 2m.$ Since $C(2m+1)=2m$ as well as
$C(2m+2)={\alpha(2m+1)},$ we conclude that
 $P$ has the same factorial functions as poset
$P^{'}={\Sigma}^{*}({{{\boxplus^\alpha}{(B_{2m+1})}}}).$

Applying Eq.(\ref{q0}), $P$ has $\frac{D(2m+2)}{B(2m+1)}=2$ elements
of rank $1$, let us call them ${\hat{0}}_1$ and ${\hat{0}}_2$. Using
Eq.(\ref{q0}), the number elements of rank $1\leq k\leq 2m+1$ in
posets $[{\hat{0}}_1, \hat{1}]$ and $[{\hat{0}}_2, \hat{1}]$ is

\begin{equation}\label{alpha}
\frac{{\alpha}{(2m+1)!}}{k!(2m+1-k)!}.
\end{equation}
The intervals $[{\hat{0}}_1,\hat{1}]$ and $[{\hat{0}}_2,\hat{1}]$
both have the factorial functions, $B(k)=k!$ for $1\leq k\leq 2m$
and $B(2m+1)=\alpha(2m+1)!. $ It can be seen that the intervals
$[{\hat{0}}_1,\hat{1}]$ and $[{\hat{0}}_2,\hat{1}]$ satisfy the
Euler-Poincar\'e relation and these intervals are Eulerian and
binomial. Applying Theorem~\ref{odd} implies that both intervals
$[{\hat{0}}_1,\hat{1}]$ and $[{\hat{0}}_2,\hat{1}]$ are isomorphic
to the poset ${{\boxplus^\alpha}{(B_{2m+1})}}$. Since, $P$ has the
same factorial functions as poset
$P^{'}={\Sigma}^{*}({{{\boxplus^\alpha}{(B_{2m+1})}}})$,
Eq.(\ref{q0}) yields that the number of elements of rank $k+1$ in
$P$ is the same as the number of elements of rank $k$ in intervals
$[{\hat{0}}_1,\hat{1}]$ and $[{\hat{0}}_2,\hat{1}]$ for $1\leq k\leq
2m+1$, that is
\begin{equation}\label{eq}
\frac{\alpha{(2m+1)!}}{k!(2m+1-k)!}.
\end{equation}
In summary, we have
\begin{enumerate}
\item[$(1)$]
$[{\hat{0}}_1,\hat{1}]=[{\hat{0}}_2,\hat{1}]=Q={{{\boxplus^\alpha}{(B_{2m+1})}}}.$
\item[$(2)$] The number of elements of rank $k+1$ in $P$ is
the same as the number of elements of rank $k$ in intervals
$[{\hat{0}}_1,\hat{1}]$ and $[{\hat{0}}_2,\hat{1}]$, $1\leq k\leq
2m+1$. \item[$(3)$]  $P$ has only two atoms
${\hat{0}}_1,{\hat{0}}_2.$
\end{enumerate}
Statements $(1), (2), (3)$ imply that
$P=P^{'}=\Sigma^{*}({{{\boxplus^\alpha}{(B_{2m+1})}}})$, as desired.
%As a consequence, $P$ has the factorial functions, $B(k)=k!$ for
%$1\leq k\leq 2m$, $B(2m+1)=\alpha(2m+1)!$, $D(1)=1$, $D(k)=2(k-1)!$
% for $2\leq k \leq 2m+1$ and $D(2m+2)=2\alpha(2m+1)!$.

% By using
%atomic and coatomic functions, it's not hard to see that both
%interval posets $[o_1,\hat{1}]$ and $[o_2,\hat{1}]$, are eulerian
%and binomial, in fact their factorial function is the same as
%factorial function of $Q$ is obtained by identifying all minimal
%elements of $\alpha$ copies of $B_{2m+1}$ and identifying their
%all maximal elements, by using theorem~\ref{odd}, we can see
%both intervals $[o_1,\hat{1}]$ and $[o_2,\hat{1}]$ are isomorphic
%to $Q$, Again using atomic and coatomic functions and using the
%fact that the number of elements of rank $k$, in sheffer poset of
%rank $n$ is $\frac{D(n)}{D(k).B(n-k)}$. We can see that the
%number of elements of rank $1$, in posets $[o_1,\hat{1}]$ and
%$[o_2,\hat{1}]$, is the same as the number of elements of rank
%$2$, in $P$, so elements of rank $1$ in $P$, $o_1,o_2$,are
%covered by all elements of rank $2$ in this poset.
%$P=\sum^{*}{Q}$, we are done.
\end{proof}

\

\begin{theorem}\label{3even}
Let $P$ be an Eulerian Sheffer poset of odd rank $n=2m+1\geq 5$ with
 $B(3)=6$ and $D(3)=4$.
 Then $P={{\boxplus^\alpha}{(\Sigma^{*}(B_{2m}))}}$.
\end{theorem}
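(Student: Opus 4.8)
The plan is to follow the template of Theorem~\ref{odd}: delete $\hat{0}$ and $\hat{1}$ from $P$, pick one connected component of the remaining poset, and re-attach a new minimal and a new maximal element to obtain a poset $Q$. Since $Q$ has odd rank and every proper interval of $Q$ is an interval of $P$, Lemma~\ref{gon} shows $Q$ is again an Eulerian Sheffer poset of rank $2m+1$ with $B(3)=6$ and $D(3)=4$, and $Q-\{\hat{0},\hat{1}\}$ is connected. It then suffices to prove that every such connected $Q$ is isomorphic to $\Sigma^{*}(B_{2m})$; reassembling the components (identifying all the minimal and all the maximal elements) yields $P=\boxplus^{\alpha}(\Sigma^{*}(B_{2m}))$, where $\alpha$ is the number of components.

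First I would dispose of the binomial side. Every binomial interval $[x,y]$ with $x\neq\hat{0}$ lies inside some $[x',\hat{1}]$ with $x'$ an atom, and since $B(3)=6$, Theorem~\ref{even} forces $[x',\hat{1}]\cong B_{2m}$; hence every binomial interval of $Q$ is Boolean and $B_Q(j)=j!$ for $1\leq j\leq 2m$. The crux is the Sheffer side, and here the difficulty is genuine: by Lemma~\ref{gon} the Euler--Poincar\'e relation is vacuous in odd rank, so it cannot determine $D(k)$ for odd $k$ the way Lemma~\ref{MRR} does in the even case. I would circumvent this by working inside a coatom. Let $c$ be a coatom of $Q$; the Sheffer interval $[\hat{0},c]$ has \emph{even} rank $2m\geq 4$ and still satisfies $B(3)=6$, $D(3)=4$, so Theorem~\ref{2even} applies and gives $[\hat{0},c]\cong\Sigma^{*}(\boxplus^{\beta}(B_{2m-1}))$ together with $D_{[\hat{0},c]}(k)=2(k-1)!$ for $2\leq k\leq 2m-1$. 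Because a rank-$k$ Sheffer interval of $[\hat{0},c]$ is literally a rank-$k$ Sheffer interval of $Q$, this yields $D_Q(k)=2(k-1)!$ for all $k\leq 2m-1$, in \emph{both} parities.

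Now the two-atom structure falls out. By Eq.(\ref{q0}) every Sheffer interval $[\hat{0},y]$ of rank $k\leq 2m-1$ has exactly $D_Q(k)/B_Q(k-1)=2(k-1)!/(k-1)!=2$ atoms, and the dual-suspension shape of $[\hat{0},c]$ above shows that each coatom is likewise above exactly two atoms. Thus every $y$ with $2\leq\rho(y)\leq 2m$ lies above exactly two atoms; write $\delta(y)$ for this pair. Along any covering $y\lessdot y'$ one has $\delta(y)\subseteq\delta(y')$ with both sides of size two, so $\delta$ is constant across covers, and it is compatible with the covers $a\lessdot b$ from an atom $a$ to a rank-$2$ element $b$ (here $a\in\delta(b)$). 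Since $Q-\{\hat{0},\hat{1}\}$ is connected, $\delta$ is globally a single pair $\{a_1,a_2\}$, so $Q$ has exactly two atoms and every element of rank $\geq 2$ lies above both of them.

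It remains to assemble the isomorphism. As every element of rank $\geq 2$ lies above $a_1$, the set of such elements together with their induced order is exactly $[a_1,\hat{1}]\setminus\{a_1\}\cong B_{2m}\setminus\{\hat{0}\}$; attaching the two atoms $a_1,a_2$ below all of its minimal elements and then $\hat{0}$ below $a_1,a_2$ reproduces precisely the dual suspension, so $Q\cong\Sigma^{*}(B_{2m})$ and hence $P=\boxplus^{\alpha}(\Sigma^{*}(B_{2m}))$. The one step I expect to be the real obstacle is determining the odd-rank values $D_Q(k)$, for which the Euler--Poincar\'e bookkeeping of the even case is unavailable; the device of passing to the even-rank interval $[\hat{0},c]$ below a coatom and invoking Theorem~\ref{2even} is what forces those values and makes the rest routine.
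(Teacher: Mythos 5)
Your proposal follows the paper's skeleton closely---decompose $P-\{\hat{0},\hat{1}\}$ into components, get $B_Q(j)=j!$ for $j\leq 2m$ from Theorem~\ref{even}, apply Theorem~\ref{2even} to a coatom interval to obtain $[\hat{0},c]\cong\Sigma^{*}(B_{2m-1})$ and the values $D_Q(k)=2(k-1)!$, then assemble $Q\cong\Sigma^{*}(B_{2m})$---but there is a genuine gap at the one step you flag as ``falling out'': the claim that connectivity of $Q-\{\hat{0},\hat{1}\}$ makes $\delta$ globally a single pair. Your propagation $\delta(y)=\delta(y')$ is valid only along covers $y\lessdot y'$ with $2\leq\rho(y)<\rho(y')\leq 2m$; a Hasse path between two rank-$\geq 2$ elements may dip through an atom, $b\gtrdot a\lessdot b'$, and there you know only $a\in\delta(b)\cap\delta(b')$, which does not give $\delta(b)=\delta(b')$. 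Connectivity of $Q-\{\hat{0},\hat{1}\}$ does not imply connectivity of the rank-$\geq 2$ part of the Hasse diagram. Concretely, your argument does not exclude a $Q$ with three atoms $a_1,a_2,a_3$ whose coatoms fall into three families of size $m$ with atom-pairs $\{a_1,a_2\}$, $\{a_1,a_3\}$, $\{a_2,a_3\}$: this configuration even survives the natural double count, since each atom $x$ has $[x,\hat{1}]\cong B_{2m}$ and hence lies below exactly $2m$ coatoms, and $2\cdot C(2m+1)=2m\cdot t$ is satisfied by $t=3$, $C(2m+1)=3m$. So neither cover-propagation nor counting closes the hole.

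The paper closes it with a meet argument that you never invoke: if two coatoms $c,c'$ share \emph{any} element $d\neq\hat{0}$ --- in particular a mere atom $x$ --- then inside $[x,\hat{1}]\cong B_{2m}$ the meet $b=c\wedge c'$ exists and has rank $2m-1\geq 3$ in $Q$; since $[\hat{0},b]$ is a Sheffer subinterval of rank $\geq 2$ of $[\hat{0},c]\cong\Sigma^{*}(B_{2m-1})$, it is isomorphic to $\Sigma^{*}(B_{2m-2})$ and contains exactly the two atoms of $[\hat{0},c]$, and symmetrically exactly the two atoms of $[\hat{0},c']$, whence $\delta(c)=\delta(c')$. (This instantly kills the three-atom configuration above: two coatoms from different families share an atom.) The paper then runs connectivity on a graph whose vertices are coatoms, with an edge whenever two coatoms share an element other than $\hat{0}$, which \emph{is} connected because $Q-\{\hat{0},\hat{1}\}$ is. With that repair, the rest of your write-up --- including the derivation of $D_Q(k)=2(k-1)!$ in both parities via Theorem~\ref{2even}, which is essentially how the paper proceeds, and the final reassembly of $Q$ as a dual suspension --- goes through.
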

 \begin{proof}
We obtain the poset $Q$ by adding $\hat{0}$ and $\hat{1}$ to a
connected component of $P-\{\hat{0},~\hat{1}\}$. It is easy to see
that $Q$ is an Eulerian Sheffer poset  and also $P$ and $Q$ have the
same factorial functions and coatom functions up to rank $2m$. That
is $B_Q(k)=B_P(k)$ and $D_Q(k)=D_P(k)$ for $1\leq k\leq 2m$. By
$B(k)$, $D(k)$, $C(k)$, $A(k)$ we denote the factorial functions and
the coatom functions and atom functions of  $Q.$

By Theorem ~\ref{even}, $Q$ has the binomial factorial functions
$B(k)=k!$ for $1\leq k\leq 2m$. We have $C(2m+1)\geq A(2m)=2m$.
Since every interval of rank $2$ in the $Q$ is isomorphic to $B_2$,
 $Q$ has at least two coatoms. For every coatom such as $a_i$ in
$Q$, Theorem ~\ref{2even} imply that
$[\hat{0},a_i]=\Sigma^{*}({\boxplus^\alpha}(B_{2m-1}))$, by
considering the factorial functions we conclude that $\alpha =1$ as
well as $[\hat{0},a_i]=\Sigma^{*}(B_{2m-1})$. Since $Q$ is obtained
by adding $\hat{0}, \hat{1}$ to a connected component of
$P-\{\hat{0},\hat{1}\}$, we conclude that there are  at least two
particular coatoms $a_1, a_2$ such that there is an element  $c \in
[\hat{0},a_1],~[\hat{0},a_2]$ where $c\neq \hat{0}$. By considering
the interval $[c, \hat{1}]$ factorial functions, Theorems
~\ref{even} and~\ref{odd} imply that there is a positive integer $k$
such that $[c, \hat{1}]=B_k.$ Therefore, there is an element $b$ of
rank $k-2$ in  $[c, \hat{1}]$ such that $b=a_1\wedge a_2$, $b$ is
also an element of rank $2m-2$ in $Q$. The interval $[\hat{0}, b]$
is subinterval of $[\hat{0}, a_1]$ , so $[\hat{0},
b]=\Sigma^{*}(B_{2m-2}).$ We conclude $[\hat{0}, b]$ only has two
atoms say $x_1, x_2$. Since
$[\hat{0},a_1]=[\hat{0},a_2]=\Sigma^{*}(B_{2m-1})$, so the intervals
$[\hat{0},a_1]$ and $[\hat{0},a_2]$ only have two atoms $x_1$ and
$x_2$.

 Define a graph $G_Q$ as follows; vertices of $G_Q$ are coatoms of poset $Q$ and two vertices
(coatoms) $a_i$ and $a_j$ adjacent in $G_Q$ if and only if there is
an element $d\neq \hat{0}$ such that $d \in [\hat{0},a_i],
[\hat{0},a_j].$ Since $Q$ is obtained by adding $\hat{0}, \hat{1}$
to a connected component of $P-\{\hat{0},\hat{1}\}$,  $G_Q$ is a
connected graph.
%We conclude that $x_1, x_2$ are the only atoms in
%all the intervals $[\hat{0},a_i]$, for all the coatoms $a_i$ in $Q.$
 Thus, every coatom of rank $2m$ in $Q$ is just
above two atoms $x_1, x_2$ in $Q$. Hence the number of elements of
rank $1$ in poset $Q$ is $2$, and by Eq.(\ref{q0}),
\begin{equation}\label{eq1}
\frac{C(2m+1)D(2m)}{B(2m)}=2.
\end{equation}
Thus, $C(2m+1)=2m$ and also $Q$ has the same factorial function as
$\Sigma^{*}(B_{2m})$. By the same argument as Theorem~\ref{2even},
we conclude that $Q=\Sigma^{*}(B_{2m})$. So
$P={{\boxplus^\alpha}{(\Sigma^{*}(B_{2m}))}}$ for some positive
integer $\alpha$, as desired.
\end{proof}

\subsection{\bf{Characterization of the structure and factorial functions of  Eulerian
Sheffer posets of rank $n\geq 5$ with  $B(3)=4$ }}\label{b4}
% \subsubsection{Characterizing factorial Eulerian \\ Sheffer posets of rank $n\geq 5$ with $B(3)=4 D(3)=2k$}
%\subsubsection{Characterizing factorial Eulerian \\ Sheffer
%posets of rank $n\geq 5$ with $B(3)=4, D(3)=4$}

\ In this section, we characterize Eulerian Sheffer posets of rank
$n\geq 5$ with  $B(3)=4$. Let $P$ be an Eulerian Sheffer poset of
rank $n\geq 5$ with $B(3)=4$. It can be seen that the poset $P$
satisfies one of the following cases:
\begin{enumerate}
\item $P$ has the following binomial factorial functions $B(k)=
2^{k-1}$, where $1\leq k\leq n-1;$ \item $n$ is even and there is a
positive integer $\alpha >1$ such that poset $P$ has the binomial
factorial functions $B(k)= 2^{k-1}$ for $1\leq k\leq n-2$ and
$B(n-1)={\alpha}.2^{n-2}$.
\end{enumerate}
\

As a consequence of Theorems $3.11$ and $3.12$ in~\cite{MR1}, we
characterize posets in the case $(i)$. \

  Theorem~\ref{6evem} deals with the case $(ii)$. It shows that if the Eulerian Sheffer posets $P$ of rank $n=2m+2$
has the binomial factorial functions $B(k)= 2^{k-1}$ for $1\leq
k\leq 2m$ and $B(2m+1)={\alpha}.2^{2m}$, where $\alpha> 1$ is an
integer, then $P=\Sigma^{*}{{{\boxplus^\alpha}{(T_{2m+1})}}}.$ See
Figure~\ref{fig:5}.

\

Given two ranked posets $P$ and $Q$, define the {\emph rank product}
$P*Q$ by
$$ P * Q
  =
   \{(x,z) \in P \times Q \:\: : \:\: \rho_{P}(x) = \rho_{Q}(z) \} .  $$
Define the order relation by $(x,y) \leq_{P * Q} (z,w)$ if $x
\leq_{P} z$ and $y \leq_{Q} w$. The rank product is also known as
the Segre product; see~\cite{Bjorner_Welker}.

\

\begin{theorem}\label{4even}[Consequence of Theorem 3.11~\cite{MR1}]
Let $P$ be an Eulerian Sheffer poset of rank $n\geq 4$ with the
binomial factorial functions $B(k)= 2^{k-1}$ for $1\leq k\leq n-1$.
Then its coatom function $C(k)$ and $P$ satisfy the following
conditions:
\begin{enumerate}
\item[$(i)$] $C(3)\geq 2$, and a length $3$ Sheffer interval is
isomorphic to a poset of the form $P_{q_1,\dots, q_r}$, as described
before. \item[$(ii)$] $C(2k)=2$, for ${\lfloor
\frac{n}{2}\rfloor}\geq k\geq 2$ and the two coatoms in a length
$2k$ Sheffer interval cover exactly the same element of rank $2k-2$.
\item[$(iii)$]$C(2k+1)=h$ is an even positive integer for
${\lfloor \frac{n-1}{2}\rfloor} \geq k\geq 2$. Moreover, the set of
$h$ coatoms in a Sheffer interval of length $2k+1$  partitions into
$\frac{h}{2}$ pairs,
$\{c_1,d_1\},\{c_2,d_2\},\dots,\{c_{\frac{h}{2}},d_{\frac{h}{2}}\}$,
such that $c_i$ and $d_i$ cover the same two elements of rank $2k-1$
\end{enumerate}
\end{theorem}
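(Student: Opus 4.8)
The plan is to induct on the rank $m$ of a Sheffer interval $[\hat{0},y]$, establishing $(ii)$ and $(iii)$ simultaneously and using Lemma~\ref{rank3} for $(i)$. The one structural fact driving everything is that, since $B(k)=2^{k-1}$ for $1\le k\le n-1$, Proposition~\ref{Tn} identifies every binomial interval $[x,w]$ with $x\neq\hat{0}$ as a butterfly poset $T_{\rho(x,w)}$. Consequently, inside a Sheffer interval $[\hat{0},y]$ of rank $m$ one has $[z,y]\cong T_{m-\rho(z)}$ for every $z$ with $\rho(z)\ge 1$; reading off the butterfly I would record two facts used repeatedly: every element of rank $r$ with $1\le r\le m-2$ is covered by exactly two elements, and every element $e$ of rank $m-2$ lies below exactly two coatoms (since $[e,y]\cong T_2=B_2$). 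For $(i)$, a length-$3$ Sheffer interval is itself an Eulerian Sheffer poset of rank $3$, so Lemma~\ref{rank3} gives both the isomorphism to some $P_{q_1,\dots,q_r}$ and $C(3)=q_1+\cdots+q_r\ge 2$.

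For the even case $(ii)$, write $m=2k$ and let $f_r=D(m)/(D(r)B(m-r))$ count the rank-$r$ elements, as in Eq.~(\ref{q0}). By the inductive hypothesis $C(2j)=2$ for $j<k$, and since $f_{r+1}/f_{r}=A(m-r)/C(r+1)$ with $A(j)=2$ for $j\ge 2$, this forces $f_{2j-1}=f_{2j}$ for $1\le j\le k-1$. Substituting into the Euler--Poincar\'e relation $\sum_{r=0}^{m}(-1)^{r}f_{r}=0$ (Eq.~(\ref{q5})), the equal consecutive pairs cancel and the alternating sum telescopes to $1-f_{m-1}+1=0$, whence $C(2k)=f_{m-1}=2$. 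Finally, since there are only these two coatoms while each of the $f_{2k-2}$ elements of rank $2k-2$ must lie below exactly two coatoms, both coatoms must lie above every rank-$(2k-2)$ element; that is, the two coatoms have the same set of lower covers, as claimed.

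For the odd case $(iii)$, write $m=2k+1$; the coatoms then have rank $2k$, and each interval $[\hat{0},c]$ below a coatom $c$ has even rank $2k$. By the even case just proved, $c$ covers exactly two elements $p,q$ of rank $2k-1$, and these share their set of rank-$(2k-2)$ lower covers; fix one common lower cover $w$. I then build the bipartite covering graph $H$ whose vertex classes are the coatoms and the rank-$(2k-1)$ elements, with an edge for each covering relation: each coatom has degree $2$ by the previous sentence, and each rank-$(2k-1)$ element has degree $2$ by the recorded butterfly fact, so $H$ is $2$-regular and hence a disjoint union of even cycles. To collapse each cycle to a single $K_{2,2}$ I examine the link $[w,y]\cong T_{3}$: it shows that $w$ lies below exactly two rank-$(2k-1)$ elements, namely $p$ and $q$, and below exactly two coatoms, with each of these coatoms covering both $p$ and $q$. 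Thus the two coatoms through $p$ also pass through $q$, so $H$ is a disjoint union of copies of $K_{2,2}$; the coatoms pair up as $\{c_i,d_i\}$ with $c_i$ and $d_i$ covering the same two rank-$(2k-1)$ elements, and $C(2k+1)=h$ is even. The base cases $m=4,5$ are supplied by Lemma~\ref{thm4} together with the telescoping computation above.

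I expect the genuine difficulty to be the odd case, specifically showing that the $2$-regular graph $H$ contains no cycle longer than $4$. Everything else is either a direct appeal to Proposition~\ref{Tn} and Lemma~\ref{rank3} or the (pleasantly telescoping) Euler--Poincar\'e count, which automatically yields $C(2k)=2$ and, in odd rank, imposes no constraint at all on $C(2k+1)$. The crux is therefore the local $T_3$-link argument, which must be combined carefully with the even-rank conclusion applied to the coatom intervals so that the parity bookkeeping of the simultaneous induction remains consistent.
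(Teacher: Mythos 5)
Your argument is correct, but note that it cannot be ``the same as the paper's proof,'' because the paper offers none: Theorem~\ref{4even} is stated as a consequence of Theorem~3.11 of Ehrenborg--Readdy~\cite{MR1} and simply imported (their theorem is proved in the infinite setting, and the author asserts without details that it applies here). So your blind write-up is a genuinely self-contained, finite-rank reconstruction, and it holds together. The pieces check out as follows. Part~$(i)$ is indeed immediate from Lemma~\ref{rank3}, since a Sheffer $3$-interval is itself an Eulerian Sheffer poset of rank $3$ and $C(3)=q_1+\cdots+q_r\ge 2$. In the even case your ratio identity $f_{r+1}/f_r=A(m-r)/C(r+1)$ is right, and --- the point that makes the simultaneous induction close --- the pairs $f_{2j-1}=f_{2j}$ for $1\le j\le k-1$ involve only $A(2k-2j+1)$ with $2k-2j+1\ge 3$ and the even-indexed $C(2j)$ already known to equal $2$ (with $C(2)=2$ for free and the base $C(4)=2$ available either from Lemma~\ref{thm4} or from the same telescoping); the covering statement then follows because each rank-$(2k-2)$ element $e$ satisfies $e\ne\hat{0}$ (here $k\ge 2$ matters) so $[e,y]\cong B_2$ puts $e$ under both of the only two coatoms. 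In the odd case you correctly isolate the crux: $2$-regularity of the bipartite cover graph $H$ only gives even cycles, and it is the link $[w,y]\cong T_3$ at a common lower cover $w$ of rank $2k-2\ge 2$ (so $w\ne\hat{0}$ and Proposition~\ref{Tn} applies to the binomial $3$-interval) that forces every component to be a $K_{2,2}$; the existence of $w$ follows from part~$(ii)$ applied to $[\hat{0},c]$ together with gradedness, and since the two lower covers $p,q$ of $c$ have exactly two upper covers each, the second coatom $d$ over $w$ completes the $4$-cycle. One last hypothesis check in your favor: every binomial interval you invoke has rank at most $n-1$, so the assumption $B(k)=2^{k-1}$ for $1\le k\le n-1$ suffices throughout, and your observation that the Euler--Poincar\'e relation in odd rank is vacuous (so no constraint on $C(2k+1)$ beyond parity) is accurate. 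In short: the paper buys the result by citation; your proof buys it by a short local analysis of butterfly intervals, and it would serve as a legitimate stand-alone proof of the finite statement.
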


\

\begin{theorem}\label{5even}[Consequence of Theorem 3.12~\cite{MR1}]
Let $P$ be an Eulerian Sheffer poset of rank $n>4$ with the binomial
factorial functions $B(k)= 2^{k-1}$, $1\leq k\leq n-1$ and the
coatom functions $C(k)$, $1\leq k\leq n$. Then a Sheffer
$k$-interval $[\hat{0},y]$ of $P$ factors in the rank product as
$[\hat{0},y]\cong (T_{k-2}\cup \{\hat{0},\hat{-1}\})* Q$, where
$T_{k-2}\cup \{\hat{0},\hat{-1}\}$ denotes the butterfly interval of
rank $k-2$ with two new minimal elements attached in order, and $Q$
denotes a poset of rank $k$ such that
\begin{enumerate}
\item[$(i)$]each element of rank $2$ through $k-1$ in $Q$ is
covered by exactly one element, \item[$(ii)$] each element of rank
$1$ in $Q$ is covered by exactly two elements, \item[$(iii)$] each
element of even rank $4$ through $2\lfloor\frac{k}{2}\rfloor$ in $Q$
covers exactly one element, \item[$(iv)$] each element of odd rank
$r$ from $5$ through $2\lfloor\frac{k}{2}\rfloor +1$ in $Q$ covers
exactly $\frac{C(r)}{2}$ elements, and \item[$(v)$]each $3$-interval
$[\hat{0},x]$ in $Q$ is isomorphic to a poset of the form
$P_{q_1,\dots,q_r}$, where $q_1+\dots+q_r=C(3).$
\end{enumerate}
\end{theorem}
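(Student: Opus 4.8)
The plan is to deduce the statement directly from Theorem $3.12$ of~\cite{MR1}, exploiting the fact that the assertion is purely local: it describes the internal order structure of a single Sheffer interval $[\hat{0},y]$ of finite rank $k$, and such an interval is the same combinatorial object whether the ambient Eulerian Sheffer poset is finite or infinite. Consequently the only points that require attention are that the hypotheses driving the argument of~\cite{MR1} already hold for $[\hat{0},y]$ under our finite assumptions, and that no step of that argument relies on the presence of an infinite chain.

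First I would record the structural inputs. Fixing a Sheffer interval $[\hat{0},y]$ of rank $k$ and restricting the factorial functions of $P$, the interval $[\hat{0},y]$ is itself a finite Eulerian Sheffer poset whose binomial factorial function satisfies $B(j)=2^{j-1}$ for $1\leq j\leq k-1$. Any binomial subinterval $[x,z]$ with $x\neq\hat{0}$ is itself a binomial poset with this factorial function, so by Proposition~\ref{Tn} it is isomorphic to the butterfly poset $T_{\rho(x,z)}$; this is exactly the fact that produces the butterfly factor $T_{k-2}$. Near the bottom, $D(1)=1$, $D(2)=2$ and $B(2)=2$, so every length-$2$ Sheffer subinterval is $B_2$, and this Sheffer data at the two lowest ranks is what is recorded by the two minimal elements attached in order below $T_{k-2}$, precisely as in the fixed factor $T_{k-2}\cup\{\hat{0},\hat{-1}\}$ of~\cite{MR1}.

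Next I would construct the companion factor $Q$ and establish the isomorphism exactly as in~\cite{MR1}: one writes each $w\in[\hat{0},y]$ as a pair $(a,b)$ of equal rank, where $a$ records the butterfly coordinate and $b$ the remaining branching data, and checks by induction on $k$ that this assignment is a well-defined poset isomorphism $[\hat{0},y]\cong(T_{k-2}\cup\{\hat{0},\hat{-1}\})*Q$. Once the isomorphism is in hand, conditions $(i)$--$(v)$ are pure bookkeeping: in a rank product the covers of $(a,b)$ are exactly the pairs $(a',b')$ with $a\lessdot a'$ and $b\lessdot b'$, so both up- and down-degrees multiply across the two factors. Conditions $(i)$ and $(ii)$ record the up-covering degrees that $Q$ must supply so that the product reproduces the degree $2$ forced at every rank by the butterfly structure (up-degree $1$ in $B$ at rank $1$ paired with up-degree $2$ in $Q$, and vice versa higher up); conditions $(iii)$ and $(iv)$ record the down-degrees dictated by the coatom functions $C(r)$, the factor $C(r)/2$ in $(iv)$ coming from the pairing of the $C(r)$ coatoms established in Theorem~\ref{4even}$(iii)$; and $(v)$ is Lemma~\ref{rank3} applied to the $3$-intervals of $Q$, which forces each to have the form $P_{q_1,\dots,q_r}$ with $q_1+\cdots+q_r=C(3)$.

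The main obstacle is the rank-product isomorphism itself rather than the verification of $(i)$--$(v)$: one must show that the map $w\mapsto(a,b)$ is well defined and bijective, i.e.\ that an element of $[\hat{0},y]$ is determined by its butterfly coordinate and its $Q$-coordinate and that every admissible pair of matching rank is realized. In~\cite{MR1} this is carried out by an induction that peels off one rank at a time using the local butterfly structure above each atom; since that induction refers only to finite initial segments of $[\hat{0},y]$ and never invokes an infinite chain, it transfers verbatim to the finite setting. The only remaining task is to confirm that each local fact it uses---butterfly binomial intervals, the diamond $B_2$ at the bottom, and the coatom pairing of Theorem~\ref{4even}---holds under our hypotheses, which is precisely what the preceding steps supply.
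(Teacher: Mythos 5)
Your proposal is correct and follows essentially the same route as the paper: the paper gives no independent proof of Theorem~\ref{5even}, presenting it (like Theorem~\ref{4even}) simply as a consequence of Theorem~3.12 of~\cite{MR1}, on the grounds that the structural description of a Sheffer $k$-interval is local and the infinite-chain hypothesis plays no role there. Your additional verification---that the butterfly structure of binomial subintervals (Proposition~\ref{Tn}), the $B_2$ diamonds at the bottom, and the coatom pairing of Theorem~\ref{4even} all hold under the finite hypotheses, so the rank-product factorization of~\cite{MR1} transfers verbatim---is exactly the justification the paper leaves implicit.
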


\

\begin{figure}[htp4]
\begin{center}
\includegraphics[height=1.5in]{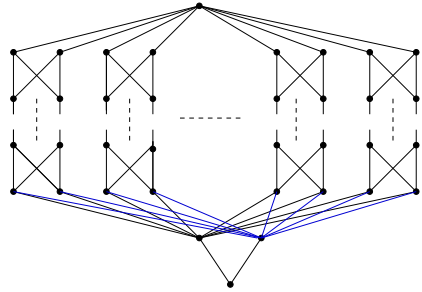}
\caption{$P=\Sigma^{*}({{{\boxplus^\alpha}{(T_{2m+1})}}})$ }
\label{fig:5}
\end{center}
\end{figure}

In the following theorem we study the only remaining case $(ii)$

 \

\begin{theorem}\label{6evem}
Let $P$ be an Eulerian Sheffer poset of even rank $n=2 m+2> 4$ with
the binomial factorial functions $B(k)= 2^{k-1}$ for $1\leq k\leq
2m$, and $B(2m+1)={\alpha}\cdot2^{2m}$, where $\alpha > 1$ is a
positive integer. Then
$P=\Sigma^{*}({{{\boxplus^\alpha}{(T_{2m+1})}}}).$
\end{theorem}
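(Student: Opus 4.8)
The plan is to determine the Sheffer factorial function $D$ of $P$ from its binomial data together with the Euler--Poincar\'e relation, thereby reducing the whole statement to the single claim that $P$ has exactly two atoms, and then to recognize $P$ as a dual suspension. First I would extract the binomial skeleton. For every atom $x$ the interval $[x,\hat1]$ is an Eulerian binomial poset of odd rank $2m+1\ge 5$ with $B(3)=4$, so Theorem~\ref{odd} forces $[x,\hat1]\cong\boxplus^{\alpha}(T_{2m+1})$ (the summation index is read off from $B(2m+1)=\alpha\cdot2^{2m}$), and every proper binomial interval of rank $r\le 2m$ is the butterfly $T_r$. Next, each Sheffer interval $[\hat0,y]$ of rank $r\le2m+1$ is itself an Eulerian Sheffer poset with $B(k)=2^{k-1}$ for $k\le r-1$, so Theorem~\ref{4even} applies to it: the even coatom values satisfy $C(2j)=2$ for $1\le j\le m$, while $C(2j+1)$ is an even integer for $2\le j\le m$. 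Only $C(3)$ is left unconstrained by parity, since Theorem~\ref{4even}(i) merely gives $C(3)\ge 2$.

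Now the engine. Writing the rank sizes of $P$ via Eq.(\ref{q0}) in terms of the coatom functions and substituting into the Euler--Poincar\'e relation Eq.(\ref{q5}) for the top interval, the alternating sum collapses to the single identity
\begin{equation*}
2 \;=\; C(2m+2)\left(1-\frac{\alpha-1}{\alpha}\,u_{0}\right),\qquad u_{0}=\prod_{j=1}^{m}\frac{C(2j+1)}{2}\ \ge\ 1 .
\end{equation*}
Positivity of $C(2m+2)$ demands $u_{0}<\alpha/(\alpha-1)$. Since $C(2j+1)/2$ is a positive integer for $j\ge2$, I would split off the factor $C(3)/2$ and observe that for $\alpha\ge2$ the bound forces $C(3)\cdot\prod_{j\ge2}\tfrac{C(2j+1)}{2}<4$; hence every odd coatom value above rank $3$ equals $2$, and $C(3)\in\{2,3\}$, the value $3$ surviving only when $\alpha=2$. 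In the thin case $u_{0}=1$ the identity returns $C(2m+2)=2\alpha$, whence $D(k)=2^{k-1}$ for $k\le 2m+1$, $D(2m+2)=\alpha\cdot2^{2m+1}$, and the number of atoms $D(2m+2)/B(2m+1)=2$.

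The remaining obstacle, and the step I expect to be hardest, is excluding the borderline phantom $\alpha=2,\ C(3)=3$ (all higher odd coatoms equal to $2$), which is numerically Eulerian yet would produce six atoms rather than two, so it cannot be killed by counting alone. I would rule it out structurally, in the spirit of the exclusion of $A'(2m+1)=3$ in the proof of Theorem~\ref{even}: $C(3)=3$ makes every Sheffer $3$-interval isomorphic to $P_3$ (the face lattice of a triangle), yet each rank-$2$ element $z$ has $[z,\hat1]\cong T_{2m}$ and hence lies under exactly two rank-$3$ elements, while each atom's upper interval $\boxplus^{2}(T_{2m+1})$ groups the rank-$3$ elements above it into pairs sharing a common butterfly diamond. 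Tracing these coverings should force some rank-$2$ element to lie below three rank-$3$ elements, contradicting the butterfly count; this is the analog of Lemmas~\ref{evens},~\ref{2m+1} and~\ref{sheffer} in the cubical case, and is the delicate part of the argument.

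Finally, the recognition step is immediate once thinness is established. The poset $P$ has exactly two atoms $a_1,a_2$, and since every rank-$2$ Sheffer interval is $B_2$, each rank-$2$ element covers both atoms; propagating this upward shows $a_1$ and $a_2$ lie below every element of positive rank. Combined with $[a_i,\hat1]\cong\boxplus^{\alpha}(T_{2m+1})$ from the first step, this exhibits $P$ as the dual suspension $\Sigma^{*}(\boxplus^{\alpha}(T_{2m+1}))$, as claimed.
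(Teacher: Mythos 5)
Your engine and your endgame coincide with the paper's own proof: the identity you display is Eq.(\ref{l9}) in different variables (the paper writes it with $k=2u_{0}$, the number of atoms of a Sheffer $(2m+1)$-interval, and $c=C(2m+2)$), and your recognition step --- exactly two atoms, $[a_{i},\hat{1}]\cong{\boxplus^{\alpha}}{(T_{2m+1})}$ by Theorem~\ref{odd}, whence $P=\Sigma^{*}({{\boxplus^{\alpha}}{(T_{2m+1})}})$ --- is precisely how the paper concludes. You are in fact sharper than the paper on one point: the paper passes from Eq.(\ref{l9}) together with $k\geq 2$ and $c\geq 2\alpha$ directly to ``$k=2$ and $c=2\alpha$'', silently discarding the second integral solution $k=3$, $c=8$, $\alpha=2$ (your $C(3)=3$, $u_{0}=3/2$ phantom), which satisfies both inequalities since $8\geq 2\alpha=4$. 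So the borderline case you isolate is treated nowhere in the paper.

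The genuine gap is your third step, and it is not merely unproven but cannot be completed: the phantom is realizable, and the covering trace you hope will fail is in fact consistent. Take six atoms and four triples with each atom in exactly two triples (e.g.\ $\{1,2,3\}$, $\{3,4,5\}$, $\{5,6,1\}$, $\{2,4,6\}$); over each triple erect a ``column'' consisting of the bottom three ranks of $B_{3}$ (so every Sheffer $3$-interval is $P_{3}\cong B_{3}$ and $D(3)=6$) capped by a butterfly tower of pairs through rank $2m+1$; glue all columns at $\hat{0}$ and $\hat{1}$. Every Sheffer $r$-interval with $r\leq 2m+1$ is a column segment (in particular every Sheffer $4$-interval is $\Sigma(B_{3})$, which the paper's own Lemma~\ref{thm4} lists as Eulerian), every binomial interval is a butterfly except $[x,\hat{1}]\cong{\boxplus^{2}}{(T_{2m+1})}$ for each atom $x$, so $B(j)=2^{j-1}$ for $j\leq 2m$ and $B(2m+1)=2\cdot 2^{2m}$, with $D(2m+2)=3\cdot 2^{2m+2}$ and $C(2m+2)=8$; and every interval satisfies Euler--Poincar\'e (at the top, $1-6+12-8+8-\cdots-8+1=0$). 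Here each rank-$2$ element lies below exactly the two rank-$3$ elements of its column, so no rank-$2$ element is ever forced under three rank-$3$ elements, and no analogue of Lemmas~\ref{evens}--\ref{sheffer} is available. This configuration is an Eulerian Sheffer poset of rank $2m+2$ meeting the theorem's hypotheses with six atoms rather than two, so the $\alpha=2$ case of the statement fails as written --- the defect traces back exactly to the unjustified leap after Eq.(\ref{l9}) that you detected. Both your argument and the paper's become correct once one adds a hypothesis excluding it, e.g.\ $D(3)=4$ (equivalently, that $P$ has two atoms), under which your thin-case computation and recognition step go through verbatim.
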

\begin{proof}
 Let $D(k)$, $1\leq k\leq 2m+2$, and also  $B(k)$, $1\leq k\leq 2m+1$,
be the Sheffer and binomial factorial functions of poset $P$,
respectively. The Euler-Poincar\'e relation for interval of size
$2m+2$ states as follows,

\begin{equation}\label{l1}
1+\sum_{k=1}^{2m+2}{(-1)^{k}\cdot\frac{D(2m+2)}{D(k)B(2m+2-k)}}=0.
\end{equation}

 The above Euler-Poincar\'e relation for the interval of  even rank  $2m+2$ can also  be stated as
follows,
\begin{equation}\label{l2}
\frac{2}{D(2m+2)}+\sum_{k=1}^{2m+1}{\frac{(-1)^{k}}{D(k)B(2m+2-k)}}=0.
\end{equation}
By expanding the left side of Eq.(\ref{l2}), we have:
\begin{equation}\label{l3}
\frac{(-1)}{{\alpha}\cdot{2^{2m}}}+\sum_{k=2}^{2m+2}{\frac{(-1)^{k}}{D(k)\cdot2^{2m+2-k-1}}}=0.
\end{equation}
Here, Eq.(\ref{l2}) for Sheffer $2m$-intervals can be stated as
follows,

\begin{equation}\label{l4}
\sum_{k=1}^{2m}{\frac{(-1)^{k}}{D(k)\cdot2^{2m-1-k}}}=0.
\end{equation}
Thus,
\begin{equation}\label{l5}
\frac{1}{2^{2m}}=\sum_{k=2}^{2m}{\frac{(-1)^k}{D(k)\cdot2^{2m+1-k}}}.
\end{equation}
%So  following \emph{Euler-Poincar\'e} relation:
%\begin{equation}\label{l7}
%\frac{(-1)}{{\alpha}.{2^{2m}}}+\sum_{k=2}^{2m+2}{\frac{(-1)^{k}}{D(k)2^{2m+1-k}}}=0\cdot
%\end{equation}
It follows from Eq.(\ref{l3}) and Eq.(\ref{l5}) that

\begin{equation}\label{l6}
\frac{-1}{{\alpha}\cdot2^{2m}}+\frac{1}{2^{2m}}+\frac{-1}{D(2m+1)}+\frac{2}{D(2m+2)}=0.
\end{equation}
Let $k$ be the number of atoms in  a Sheffer interval of size
${2m+1}$ and $c=C(2m+2)$, so $D(2m+1)=k\cdot2^{2m-1}$ and
$D(2m+2)=ck\cdot 2^{2m-1}$. Therefore
\begin{equation}\label{l8}
\frac{1}{2^{2m}}-\frac{1}{{\alpha}\cdot2^{2m}}=\frac{1}{k\cdot2^{2m-1}}-\frac{1}{{\frac{ck}{2}}\cdot2^{2m-1}}.
\end{equation}

Thus,
\begin{equation}\label{l9}
\frac{1}{2}-\frac{1}{2\alpha}=\frac{1}{k}-\frac{1}{(\frac{c}{2})k}.
\end{equation}
 Comparing coatom and atom functions of Sheffer and binomial intervals, we have $k\geq 2$ as well as $c\geq
2{\alpha}$. By Eq.(\ref{l9}), we conclude that $k=2$ and
$c=2\alpha$. So $D(2m+1)=2^{2m}$ and
$D(2m+2)={\alpha}\cdot2^{2m+1}$. Since $B(2m+1)={\alpha}\cdot2^{2m}$
and $B(2m)=2^{2m-1}$, the number of atoms in poset $P$ is
$\frac{D(2m+2)}{B(2m+1)}=2.$ Since the only Eulerian Sheffer
interval of rank $2$ is $B_2,$ every Sheffer $j$-interval has  two
atoms for $1\leq j\leq 2m+1$.

$D(k)=2B(k-1)=2^{k-1}$ for $2\leq k\leq 2m+1$ as well as
$D(2m+2)={\alpha}\cdot2^{2m+1}.$ Let ${\hat{0}}_1$, ${\hat{0}}_2$ be
atoms of
 $P$. By Theorem~\ref{odd}, both intervals
$[{\hat{0}}_1,\hat{1}]$ and $[{\hat{0}}_2,\hat{1}]$ are isomorphic
to the poset $Q={{{\boxplus^\alpha}{(T_{2m+1})}}}$. It follows from
Eq.(\ref{q0}) that the number of elements of rank $k-1$ in the
intervals $Q=[{\hat{0}}_1,\hat{1}]=[{\hat{0}}_2,\hat{1}]$ is the
same as the number of elements of rank $k$ in poset $P$ and it can
be computed as follows,
\begin{equation}\label{qa}
\frac{D(2m+2)}{D(k)B(2m+2-k)}=\frac{B(2m+1)}{B(k)B(2m+1-k)}.
\end{equation}

We  know that ${\hat{0}}_1$,${\hat{0}}_2$ are the only atoms in $P$,
so by the above fact we conclude that
$P=\Sigma^{*}Q=\Sigma^{*}({{{\boxplus^\alpha}{(T_{2m+1})}}}) $, as
desired.

 %it's easy to see
%$$(1),\frac{2}{D(2)}+\sum_{k=2}^{2m+1}\frac{{(-1)^{k})}{D(k).2^{2m+1-k}}}-\frac{1}{{\alpha}2^{2m}}=0$$
%Also, every sheffer interval of length $2m$ in poset $P$ is eulerian.
%$$\frac{2}{D(2)}+\sum_{k=1}^{2m-1}{\frac{{(-1)^{k})}{D(k).2^{2m-1-k}}}=0$$
%$$\sum_{k=1}^{2m}{\frac{{(-1)^{k})}{D(k).2^{2m-1-k}}}=0$$
%$$\sum_{k=2}^{2m}{\frac{{(-1)^{k})}{D(k).2^{2m-1-k}}}=\frac{1}{2^{2m-2}}$$
%$$\sum_{k=2}^{2m}{\frac{{(-1)^{k})}{D(k).2^{2m+1-k}}}=\frac{1}{2^{2m}}$$,

%By replacing last equation in $(1)$, we can see:
%$\frac{1}{2^{2m}}-\frac{1}{ \alpha{2^{2m}}}=\frac{1}{D(2m+1)}-\frac{2}{D(2m+2)}$
\end{proof}

\section{Finite Eulerian triangular posets}\label{triangular}

As we discussed before, the larger class of posets to consider are
triangular posets. For definitions regarding triangular posets, see
Section~\ref{def}. A non-Eulerian example of  triangular poset is
the the face lattice of the $4$-dimensional regular polytope known
as the 24-cell. In the following theorem, we characterize the
Eulerian triangular posets of rank $n\geq 4$ such that $B(k,k+3)=6$
for $1\leq k\leq n-3.$

\

\begin{theorem}\label{triangle}
Let $P$ be an Eulerian triangular poset of rank $n\geq 4$ such that
for every $0\leq k\leq n-3,$ $B(k,k+3)=6$. Then $P$ can be
characterized as follows:
\begin{enumerate}
\item $n$ is odd, there is an integer $\alpha \geq 1$ such that
$P={{\boxplus^{\alpha}}{(B_{n})}}$. \item $n$ is even, then $P=B_n$.
\end{enumerate}
\end{theorem}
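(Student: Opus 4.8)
The plan is to reduce the theorem to the already-established Sheffer classification in Theorem~\ref{sheffer6} by showing that such a $P$ is in fact an Eulerian Sheffer poset with $B(3)=D(3)=6$. I would argue by induction on the rank $n$.

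First, the \emph{local step}. In any Eulerian $2$-interval the Euler-Poincar\'e relation forces exactly two middle elements, so every $2$-interval is $\cong B_2$ and $B(k,k+2)=2$. In an Eulerian $3$-interval $[x,y]$ the numbers of atoms and of coatoms are equal, and because the relevant $2$-subintervals are $B_2$, each coatom covers exactly two atoms and each atom is covered by exactly two coatoms; hence the number of maximal chains equals twice the number of atoms. The hypothesis $B(k,k+3)=6$ then forces exactly $3$ atoms and $3$ coatoms, whose incidence is a single hexagon, i.e. every $3$-interval is $\cong B_3$. In particular the rank-$3$ Sheffer intervals $[\hat0,z]$ have $D(3)=6$, consistent with Lemma~\ref{rank3}, which identifies a rank-$3$ poset with $D(3)=6$ as $B_3$.

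Next, the \emph{main induction}: I would prove that every proper interval of length $d\le n-2$ is $\cong B_d$. The base rank $n=4$ is immediate, since $P$ is then an Eulerian binomial poset of rank $4$ with $B(3)=6$, hence $\cong B_4$ by Lemma~\ref{T4}. For the inductive step, a proper interval $[x,y]$ of length $d\le n-2$ is itself an Eulerian triangular poset of rank $d<n$ all of whose $3$-subintervals are $B_3$, so the induction hypothesis applies. If $d$ is even (or $d=2$, by the local step) this gives $[x,y]\cong B_d$ directly. If $d$ is odd the hypothesis only yields $[x,y]\cong\boxplus^{k}(B_d)$, and the crux is to exclude $k>1$. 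Here I would use properness: since $d\le n-2$, I can enlarge $[x,y]$ to an interval of even length $d+1<n$ that is still proper, namely $[x,w]$ for a cover $w\gtrdot y$ when $y\neq\hat1$, and otherwise (so $\rho(x)\ge2$) the interval $[v,\hat1]$ for some $v\lessdot x$. By the induction hypothesis this even interval is a single $B_{d+1}$, and in a boolean lattice every lower interval at a coatom (resp. upper interval at an atom) is a single $B_d$; therefore $k=1$ and $[x,y]\cong B_d$.

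Finally I would assemble the Sheffer structure and conclude. By triangularity the number of maximal chains of $[x,y]$ depends only on $(\rho(x),\rho(y))$; combined with the previous step, every binomial interval of length $d\le n-2$ has exactly $d!$ chains, the only binomial intervals of length $n-1$ are the $[a,\hat1]$ with $a$ an atom, all sharing the value $B(1,n)$ by triangularity, and there is no binomial interval of length $n$. Hence the binomial factorial function and the Sheffer factorial function $D(\ell)=B(0,\ell)$ are well defined, so $P$ is an Eulerian Sheffer poset with $B(3)=D(3)=6$, and Theorem~\ref{sheffer6} yields $P=\boxplus^{\alpha}(B_n)$ for $n$ odd and $P=B_n$ for $n$ even. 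The hard part will be the odd-length case of the induction: the nontrivial $\boxplus^{k}$ gluings are completely invisible to the $3$-interval hypothesis, and the only way I see to eliminate them is to exploit that a \emph{proper} interval is squeezed inside a longer even-length interval of $P$, which by induction is a rigid boolean lattice and therefore admits no gluing in its facet intervals.
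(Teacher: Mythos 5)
Your proposal is correct, and its hardest step is handled soundly; the comparison with the paper splits by parity. For $n$ odd your argument is essentially the paper's: the paper also reduces to Theorem~\ref{sheffer6} after asserting that every proper interval is boolean, and your ``even-envelope'' trick (a proper odd-length interval of length $d\le n-2$ sits inside a proper interval of even length $d+1\le n-1$, which by induction is a rigid $B_{d+1}$, killing any $\boxplus^{k}$ gluing with $k>1$) is exactly the justification the paper leaves implicit when it claims that every interval of rank $k\le n-1$ is isomorphic to $B_k$; likewise your base case ($n=4$ is binomial, then Lemma~\ref{T4}) is equivalent to the paper's ($n=4$ is Sheffer, then Lemma~\ref{thm4}). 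Where you genuinely diverge is the even case: the paper does \emph{not} reduce it to Theorem~\ref{sheffer6}. Instead, writing $r$ and $t$ for the numbers of elements of rank $1$ and rank $n-1$, it uses the induction hypothesis (via the odd case, the corank-one intervals are $\boxplus^{k_r}(B_{n-1})$ and $\boxplus^{k_t}(B_{n-1})$) to get $B(0,n-1)=k_r(n-1)!$ and $B(1,n)=k_t(n-1)!$, and then runs the Euler--Poincar\'e relation on the full poset to force $k_r=k_t=1$ and $t=r=n$, concluding with Proposition~\ref{Bn}. Your route avoids that computation entirely by observing that Sheffer-ness does not require ruling out gluings at the top corank: triangularity alone makes $B(n-1)=B(1,n)$ and $D(n-1)$, $D(n)$ well defined, and Theorem~\ref{sheffer6}(ii) then eliminates the gluings for even rank. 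This buys a uniform one-line reduction for both parities, at the cost of leaning entirely on Theorem~\ref{sheffer6} (whose even case itself rests on Lemma~\ref{MRR} and Proposition~\ref{Bn}), whereas the paper's explicit count is self-contained at the top level and exhibits the atom and coatom numbers $r=t=n$ directly. Both arguments are valid.
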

\begin{proof}
We proceed by induction on  the rank of poset $n$.
\begin{enumerate}
\item[$\bullet$] $n=4$.
 A triangular poset of rank $4$ is also a Sheffer poset.  Since $B(1,4)=6$, by  Lemma~\ref{thm4} we conclude that $P=B_4$.
%Let $m,k,n$ be
%numbers of elements of rank $3, 2, 1$ in $P$, respectively.
% \emph{Euler-Poincar\'e} relation for $P$ states that, $2-n+k-m=0$. We enumerate the number of
%maximal chains: $4k=6n=6m$, we conclude $m=n$, we also conclude that
%$k=6$ and $n=4$. It's easy to see that, poset $P$ has the same
%factorial function as $B_4$, by using proposition 2.15 in
%~\cite{MR1}, we conclude that poset $P$ is isomorphic to $B_4$.

\item[$\bullet$] $n=2m+1$. By induction hypothesis, every
interval of rank $k\leq 2m$ in $P$ is isomorphic to $B_k$. Hence $P$
is a Sheffer poset and Theorem~\ref{sheffer6} implies that
$P={{\boxplus^{\alpha}}{(B_{n})}}$, where $\alpha \geq 1$ is a
positive integer.
 \

\item[$\bullet$] $n=2m+2$. Let $r$ and $t$ be the number of
elements of rank $1$ and $2m+1$ in $P$.  By induction hypothesis,
there are positive integers $k_t$ and $k_r$ such that
$B(1,2m+2)=k_t(2m+1)!$ and $B(0,2m+1)=k_r(2m+1)!.$ Therefore,
$B(0,2m+2)=tk_r(2m+1)!=rk_t(2m+1)!$ and also $B(n,n+k)=k!$, where
$1\leq k\leq 2m+1-n$ and $n\geq 1$. The Euler-Poincar\'e relation
for interval of size $2m+2$ state as follows,
\begin{equation}\label{t1}
1+\sum_{k=1}^{2m+2}{\frac{(-1)^k B(0,2m+2)}{B(0,k)B(k,2m+2)}}=0.
\end{equation}
By substituting the values in Eq.(\ref{t1}), we have
\begin{equation}\label{t2}
1+tk_r\left(\sum_{k=2}^{2m}{\frac{{(-1)}^{k}(2m+1)!}{k!(2m+2-k)!}}\right)+
\frac{-tk_r(2m+1)!}{k_t(2m+1)!} + \frac{-tk_r(2m+1)!}{k_r(2m+1)!} +1
=0.
\end{equation}
Eq.(\ref{t2}) lead us to
\begin{equation}\label{t3}
2-t\left(\frac{k_r}{k_t}+\frac{k_r}{k_r}\right)+tk_r\left(\sum_{k=2}^{2m}\left({\frac{(-1)^k(2m+1)!}{k!(2m+2-k)!}}\right)\right)=0,
\end{equation}
so,

\begin{equation}\label{t5}
2=t\left(\frac{k_r}{k_t}+\frac{k_r}{k_r}+{\frac{-(k_r)(4m+2)}{2m+2}}\right).
\end{equation}

Without loss of generality, let us assume that $k_r \geq k_t \geq
1$. Therefore,
\begin{equation}\label{t6}
2=t\left(\frac{k_r}{k_t}+\frac{k_r}{k_r}+{\frac{-(k_r)(4m+2)}{2m+2}}\right)\leq
t\left({k_r + 1}-\left(\frac{4m+2}{2m+2}\right)k_r\right) \leq
t\left(1-{\frac{2m}{2m+2}}k_r\right).
\end{equation}

 The right-hand side of the above equation is positive only
if $k_r=1$. So $k_r=1$ and since $k_r \geq k_t \geq 1$, we conclude
that $k_t=1$. Therefore, $2=t{\frac{2}{2m+2}}$ and so $t=2m+2$.
Similarly, we conclude  that $r=2m+2$. Thus, $P$ has the same
factorial function as $B_{2m+2}$ and by Proposition ~\ref{Bn}, this
poset is isomorphic to $B_{2m+2}$, as desired.

\end{enumerate}
\end{proof}

\section{Conclusions and remarks}\label{remark}

An interesting research problem is to classify the factorial
functions of Eulerian triangular posets. It is also interesting to
classify Eulerian trianguler posets with specific factorial
functions on their smaller intervals. In Theorem~\ref{triangle}, we
characterize the Eulerian triangular posets of rank $n\geq 4$ such
that $B(k,k+3)=6$, for $1\leq k\leq n-3.$

\

Readers can find the following result of Stanley. A graded  poset
$P$ is a boolean lattice if every $3$-interval is a boolean lattice
and for every $[x,y]$ of rank of at least $4$ the open interval
$(x,y)$ is connected (See \cite{G}, Lemma~8). Using Stanley's
result, it might be possible to obtain different proofs for Theorems
~\ref{even},~\ref{odd},~\ref{sheffer6} and~\ref{triangle}.

\

This research is motivated by the above result of Stanley. We
characterize Eulerian binomial and Sheffer posets by considering the
factorial functions of $3$-intervals. The project of studying
Eulerian Sheffer posets is almost complete. Only the following cases
remain to be studied:

\

\begin{itemize}
\item[$\bullet$]  Finite Eulerian Sheffer posets of odd rank with  $B(3)=6, D(3)=8.$ In
this case we ask the following question:
 Let $P$ be a Eulerian Sheffer poset of odd rank $n=2m+1\geq 5$ with  $B(3)=6, D(3)=8$ . Is there a positive integer $k$ such that  $P$ has the same
factorial function as the poset $Q_k={{\boxplus^k}{(C_{2m+1})}}$?

\item [$\bullet$] Finite Eulerian Sheffer posets of rank $5$ with $B(3)=6,
D(3)=10.$ We conjecture that there is no poset with these
conditions.
\end{itemize}

%\begin{center}\begin{tabular}{|l|c|} \hline \qquad \qquad \quad
%\; $G$ & Upper bound on $\D(G)$ \\ \hline
%$C_2 \oplus C_2 \oplus C_{2m}$ & $\M(G) + 2$ \\
%$C_{2p^a} \oplus C_{2p^b} \oplus C_{2p^c}$ &  \\ \hline
%$C_2 \oplus C_2 \oplus C_2 \oplus C_{2m}$ & $\M(G) + 7$ \\
%$C_{2p^a} \oplus C_{2p^b} \oplus C_{2p^c} \oplus C_{2p^d}$ &  \\ \hline
%$C_3 \oplus C_3 \oplus C_{3m}$ & $\M(G) + 6$ \\
%$C_{3p^a} \oplus C_{3p^b} \oplus C_{3p^c}$ &  \\ \hline
%$C_3 \oplus C_3 \oplus C_3 \oplus C_{3m}$ & $\M(G) + 18$ \\
%$C_{3p^a} \oplus C_{3p^b} \oplus C_{3p^c} \oplus C_{3p^d}$ &  \\ \hline
%\end{tabular} \end{center}

%\begin{proof} The upper bounds on $\D(G)$ follow immediately from Theorem~\ref{sn(G)small} and the inequality $\D(G) \leq \sn(G) - n + 1$ of Lemma~\ref{sn(G)lemma}. \end{proof}

\section*{Acknowledgements}

The author would like to thank Richard Stanley for suggesting this
research problem. I also would like to thank Richard Ehrenborg,
Margaret A. Readdy and Richard Stanley for helpful discussions and
comments and thank Craig Desjardins for reading the draft of this
paper.


\begin{thebibliography}{99}

\bibitem{B1} J. Backelin, Binomial posets with non-isomorphic intervals, arXiv math.CO/0508397, 22 August, 2005.
\bibitem{Bjorner_Welker}{A. Bjorner, V. Welker},{Segre and Rees products of posets, with ring-theoretic
applications}, {J. Pure Appl. Algebra}, 198 (2005), 43-55.
%%       {Journal of Pure and Applied Algebra}

\bibitem{DRS1} P. Doubilet, G.-C.Rota, R. Stanley, On the foundations of combinatorial theory (VI). The idea of
generating functions, in Sixth Berkeley Symp. on Math. Stat. and
Prob., vol. 2: Probability Theory, Univ. of California, Berkeley,
1972, pp. 267-318.

\bibitem{MR1} R. Ehrenborg and M. Readdy, Classification of the factorial functions of Eulerian binomial and Sheffer posets, J. Combin. Theory
Ser.A 114 (2007) 339-359.

\bibitem{MR2} R. Ehrenborg and M. Readdy, Sheffer posets and r-signed permutations, Ann. Sci. Math. Qu\'ebec 19 (1995)
173–196.
\bibitem{FS1} J. Farley and S. Schmidt, Posets that locally resemble distributive lattices, J. Combin. Theory Ser. A 92 (2000) 119-137.

\bibitem{G} D. J. Grabiner, Posets in which every interval is a product of chains, and
natural local actions of the symmetric group, Discrete Math. 199
(1999) 77-84.

\bibitem{H} G. Hetyei, Matrices of formal power series associated to
binomial posets.   J. Algebraic Combin. 22 (2005), no. 1, 65--104.

\bibitem{R1} V. Reiner, Upper binomial posets and signed permutation statistics, Eouropean J. Combin. 14( 1993) 581-588.

\bibitem{S} R. Simion, R. Stanley, Flag-symmetry of the poset of shuffles and a local action of the symmetric
group, Discrete Mathematics 204 (1999) 369-396.

\bibitem{S1}R. Stanley, Binomial posets, M\"obius inversion, and permutation enumeration, J.Combin.Theory Ser.A 20 (1976) 336-356.
\bibitem{E1} R. Stanley, Enumerative Combinatorics, vol.I,  Wadsworth and Brooks/Cole, Pacific Grove,
1986.

\bibitem{S2} R. Stanley, Flag-symmetric and locally rank-symmetric partially ordered
sets, Electron. J. Combin. 3 (1996).
\end{thebibliography}
\end{document}